\title{Spectral analysis of the biharmonic operator
subject to Neumann boundary conditions on dumbbell domains}
\author{Jos\'{e} M. Arrieta\thanks{Dep. Matem\'atica Aplicada, Universidad Complutense de Madrid, 28040 Madrid, Spain and Instituto de Ciencias Matem\'aticas CSIC-UAM-UC3M-UCM, \footnotesize{\texttt{arrieta@mat.ucm.es}}}
        \and
        Francesco Ferraresso\thanks{Dipartimento di matematica ``Tullio Levi-Civita'', Universit\`a degli Studi di Padova, 35121 Padova, Italy, \texttt{fferrare@math.unipd.it}}
        \and
        Pier Domenico Lamberti\thanks{Dipartimento di matematica ``Tullio Levi-Civita'', Universit\`a degli Studi di Padova, 35121 Padova, Italy, \texttt{lamberti@math.unipd.it}}
        }
\DeclarePairedDelimiter{\abs}{\lvert}{\rvert}
\DeclarePairedDelimiter{\norma}{\lVert}{\rVert}
\DeclareMathOperator{\Div}{div}
\newcommand{\numberset}{\mathbb}
\newcommand{\N}{\numberset{N}}
\newcommand{\R}{\numberset{R}}
\newcommand{\D}{\mathscr{D}}
\newcommand{\E}{\mathcal{E}}
\newcommand{\Hi}{\mathcal{H}}
\newcommand{\LL}{\mathcal{L}}
\newcommand{\eps}{\epsilon}
\newcommand{\M}{\mathcal{M}}
\newtheorem{theorem}{Theorem}
\newtheorem{lemma}{Lemma}
\newtheorem{proposition}{Proposition}
\newtheorem{definition}{Definition}
\theoremstyle{plain}
\newtheorem{remark}{Remark}
\numberwithin{equation}{section}
\begin{document}

\maketitle

\begin{abstract}
We consider the biharmonic operator subject to homogeneous boundary conditions of Neumann type on a
planar dumbbell domain which consists of two disjoint domains connected by a thin channel.
We analyse the spectral behaviour of the operator,  characterizing the limit of the eigenvalues and of the eigenprojections as the thickness of the channel goes to zero.  In applications to linear elasticity, the fourth order operator under consideration  is related to  the deformation
of a  free elastic plate, a part of which shrinks to a  segment. In contrast to what happens with the classical second order case, it turns out that the limiting equation is here distorted  by a strange factor depending on a parameter which plays the role of the Poisson coefficient of the represented plate.
\end{abstract}


\section{Introduction}
This paper is devoted to a spectral analysis of the biharmonic operator subject to   Neumann boundary conditions  on a domain which undergoes a singular perturbation.
The focus is on    planar dumbbell-shaped domains $\Omega_{\epsilon}$, with $\epsilon >0$, described in Figure~\ref{fig: dumbbell}. Namely,
given two   bounded  smooth domains  $\Omega_L, \Omega_R$ in $\R^2$ with $\Omega_L\cap \Omega_R=\emptyset $  such that $\partial \Omega_L \supset \{(0,y)\in \R^2 : -1<y<1\}$, $\partial \Omega_R  \supset \{(1,y)\in \R^2 : -1<y<1\}$, and $(\Omega_R\cup \Omega_L) \cap \left([0,1]\times[-1,1]\right) = \emptyset$, we
set
\[\Omega = \Omega _{L}\cup \Omega_R, \ \ {\rm and}\ \  \Omega_\eps = \Omega \cup R_\eps \cup L_\eps\, , \]
for all $\epsilon >0$ small enough. Here
$ R_\eps \cup L_\eps$ is a thin channel connecting $\Omega_L$ and $\Omega_R$  defined by
\begin{equation}
\label{def: R_eps}
R_\eps = \{(x,y)\in\R^2 : x\in(0,1), 0< y< \eps g(x) \},
\end{equation}
\[ L_\eps =( \{0\} \times (0, \eps g(0)) \cup (\{1\}\times (0, \eps g(1))) ), \]
where  $g \in C^2[0,1]$ is a positive real-valued function. Note that $\Omega_\eps$ collapses to the limit set $\Omega_0 = \Omega \cup ([0,1] \times \{0\})$ as $\eps \to 0$.

We consider  the eigenvalue problem
\begin{equation} \label{PDE: main problem_eigenvalues}
\begin{cases}
\Delta^2u - \tau \Delta u + u = \lambda \, u,   &\textup{in $\Omega_\eps$,}\\
(1-\sigma) \frac{\partial^2 u}{\partial n^2} + \sigma \Delta u = 0, &\textup{on $\partial \Omega_\eps$,}\\
\tau \frac{\partial u}{\partial n} - (1-\sigma)\, \Div_{\partial \Omega_\eps}(D^2u \cdot n)_{\partial \Omega_\eps} - \frac{\partial(\Delta u)}{\partial n} = 0, &\textup{on $\partial \Omega_\eps$,}
\end{cases}
\end{equation}
where  $\tau \geq 0$, $\sigma \in (-1,1)$ are fixed parameters,  and we analyse the behaviour of the eigenvalues  and of the corresponding eigenfunctions as $\eps \to 0$.  Here  $\Div_{\partial \Omega_\eps}$ is the tangential divergence operator, and $(\cdot)_{\partial \Omega_\eps}$ is the projection on the tangent line to $\partial \Omega_\eps$.
 The corresponding Poisson problem reads
\begin{equation} \label{PDE: main problem}
\begin{cases}
\Delta^2u - \tau \Delta u +u= f, &\textup{in $\Omega_\eps$},\\
(1-\sigma) \frac{\partial^2 u}{\partial n^2} + \sigma \Delta u = 0, &\textup{on $\partial \Omega_\eps$},\\
\tau \frac{\partial u}{\partial n} - (1-\sigma) \Div_{\partial \Omega_\eps}(D^2u \cdot n)_{\partial \Omega_\eps} - \frac{\partial(\Delta u)}{\partial n} = 0, &\textup{on $\partial \Omega_\eps$},
\end{cases}
\end{equation}
with  datum
$f \in L^2(\Omega_\eps)$.
\begin{figure}
\centering
\includegraphics[width=0.7\textwidth]{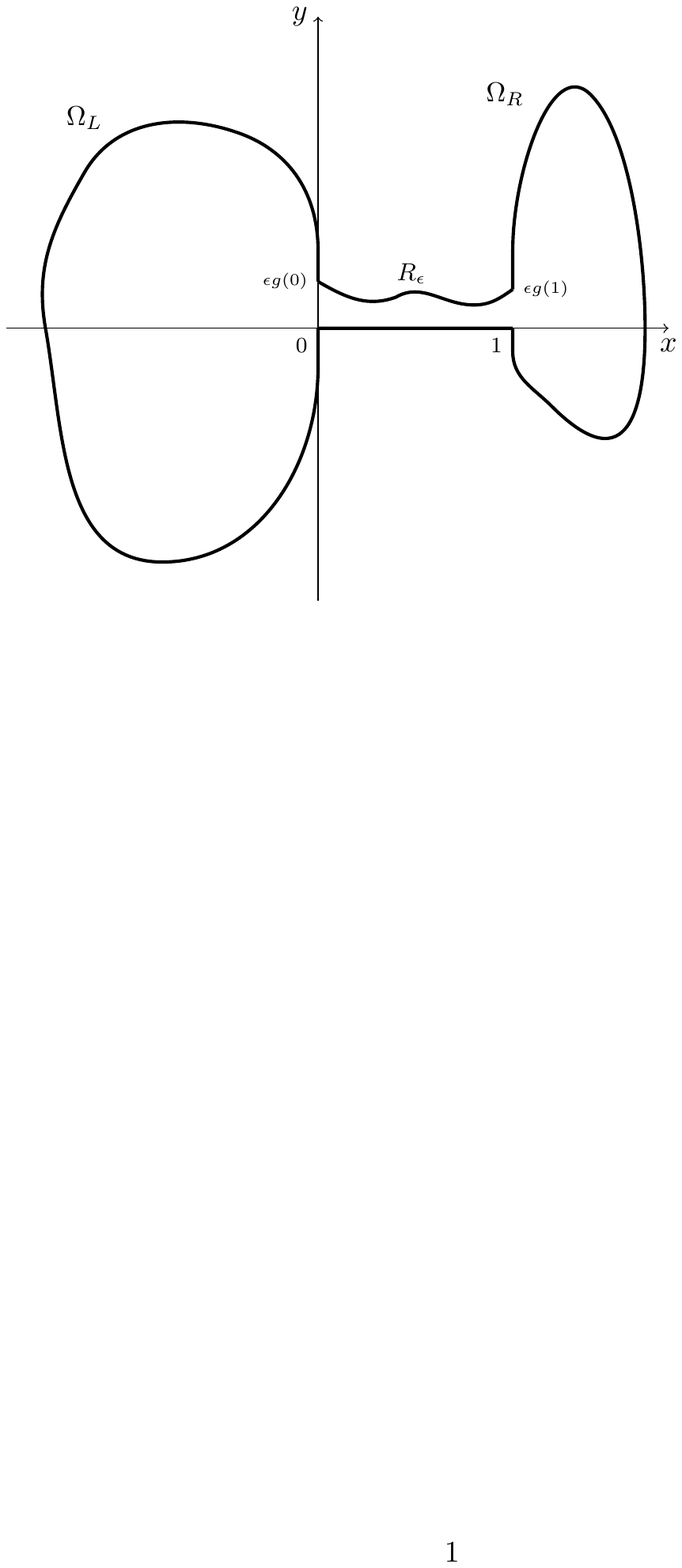}
\caption{The dumbbell domain $\Omega_\eps$.}
\label{fig: dumbbell}
\end{figure}

 Since $\partial\Omega_{\epsilon}$ has corner singularities at the junctions $(0,0)$,  $(0,\epsilon g(0))$, $(1,0)$, $(1,\epsilon g(1))$ and $H^{4}$
regularity does not hold around those points, we shall  always understand problems  \eqref{PDE: main problem_eigenvalues}, \eqref{PDE: main problem},
(as well as analogous problems) in a weak (variational) sense, in which case only $H^2$ regularity is required.

Namely, the variational formulation of problem \eqref{PDE: main problem} is the following: find $u\in H^2(\Omega_\eps)$ such that
\begin{equation} \label{PDE: main problem weak}
\int_{\Omega_\eps} (1-\sigma) D^2u : D^2\varphi + \sigma \Delta u \Delta \varphi + \tau \nabla u \cdot \nabla \varphi +u\varphi \,dx = \int_{\Omega_\eps} f \varphi\,dx\, ,
\end{equation}
for all $\varphi \in H^2(\Omega_\eps)$. The quadratic form associated with the left-hand side    of (\ref{PDE: main problem weak})  -  call it $B_{\Omega_{\epsilon}}(u, \varphi )$ -    is coercive for all $\tau \geq 0$  and  $\sigma \in (-1,1)$, see e.g. \cite{ChAppl}, \cite{Ch}.
In particular, by standard spectral theory  this quadratic form allows to define a non-negative self-adjoint operator $T=(\Delta^2 - \tau \Delta +I)_{N(\sigma )}$  in $L^2(\Omega_{\eps})$ which plays the role of   the classical operator $\Delta^2 - \tau \Delta +I$   subject to the  boundary conditions  above.
More precisely, $T$ is uniquely defined by the relation
$$ B_{\Omega_{\epsilon}}(u, \varphi )=<T^{1/2}u,T^{1/2}\varphi >_{L^2(\Omega_{\epsilon})} \, , $$
for all
$ u,\varphi \in H^2(\Omega_{\epsilon})$. In particular the domain of the square root $T^{1/2}$ of $T$ is $H^2(\Omega_{\epsilon})$ and
 a function $u$ belongs to the domain of $T$ if and only if
$u\in H^2(\Omega_{\epsilon})$
and there exists $f\in L^2(\Omega_{\epsilon})$ such that
$B_{\Omega_{\epsilon}}(u, \varphi )=  <f,\varphi >_{L^2(\Omega_{\epsilon})} $ for all $\varphi \in H^2(\Omega_{\epsilon})$, in which case
$Tu=f$. We refer to \cite[Chp.~4]{Daviesbook} for a general introduction to the variational approach  to the spectral analysis of partial  differential operators on non-smooth domains.

 The operator $T$  is densely defined  and its eigenvalues and eigenfunctions are exactly those of  problem (\ref{PDE: main problem_eigenvalues}).
Moreover,  since the embedding $H^2(\Omega_{\epsilon} )\subset L^2(\Omega_{\epsilon}  )$ is compact,   $(\Delta^2 - \tau \Delta +I)_{N(\sigma )}$ has compact resolvent, hence the spectrum is discrete and consists of a divergent increasing sequence of positive eigenvalues
$\lambda_{n}(\Omega_{\epsilon}),\  n\in \N$, with finite multiplicity (here each eigenvalue is repeated as many times as its multiplicity).

Problem (\ref{PDE: main problem_eigenvalues}) arises in linear elasticity in connection with the Kirchhoff-Love model for  the study of vibrations and deformations of free plates, in which case  $\sigma $ represents the
 Poisson ratio of the material and  $\tau$ the lateral tension. In this sense, the dumbbell domain $\Omega_{\epsilon}$  could  represent a plate and $R_{\epsilon }$
  a part of it which  degenerates to the   segment   $[0,1] \times \{0\}$.

We note that problem (\ref{PDE: main problem_eigenvalues}) can be considered as a natural fourth order version of the corresponding  eigenvalue problem for the
Neumann Laplacian $-\Delta_N$, namely
\begin{equation} \label{PDE: second problem_eigenvalues}
\begin{cases}
-\Delta  u  + u = \lambda \, u,   &\textup{in $\Omega_\eps$,}\\
\frac{\partial u}{\partial n} = 0, &\textup{on $\partial \Omega_\eps$,}\\
\end{cases}
\end{equation}
the variational formulation of which reads

\begin{equation} \label{PDE: second problem weak}
\int_{\Omega_\eps}  Du \cdot D \varphi +  u  \varphi \,dx =\lambda   \int_{\Omega_\eps} u   \varphi\,dx ,
\end{equation}
where the test functions $\varphi$ and the unknown $u$ are considered in $H^1(\Omega_{\eps})$.

Although the terminology used in the literature  to refer to boundary value problems for fourth order operators is sometimes a bit misleading, we emphasise
that the formulation  of  problems  \eqref{PDE: main problem_eigenvalues}, \eqref{PDE: main problem} is rather classical, see e.g.   \cite[Example~2.15]{necas}
where problem  \eqref{PDE: main problem} with $\tau =0$ is referred to as the Neumann problem for the biharmonic operator. Moreover, we point out that a number of  recent papers devoted to the  analysis of \eqref{PDE: main problem_eigenvalues} have confirmed that problem  (\ref{PDE: main problem_eigenvalues})
can be considered as   the natural Neumann problem for the biharmonic operator, see  \cite{arlacras}, \cite{ArrLamb}, \cite{BuosoProv}, \cite{BuosoProvCH}, \cite{bulacompl}, \cite{ChAppl}, \cite{Ch}, \cite{Prov}.
We also refer to  \cite{GazzGS}  for an extensive discussion on boundary value problems for higher order elliptic operators.

It is  known that  the eigenelements of the Neumann Laplacian   on a typical dumbbell domain as above have a singular behaviour, see \cite{ArrPhD}, \cite{Arr1}, \cite{Arr2}, \cite{ACJdE}, \cite{ACL}, \cite{AHH}, and the references therein.  For example,   it is  known that  not all the eigenvalues of  $-\Delta_N$  on $\Omega_{\epsilon}$ converge to  the eigenvalues of $-\Delta_N$ in $\Omega$; indeed, some of the eigenvalues of the dumbbell domain are asymptotically close to the eigenvalues of a boundary value problem  defined  in the channel $R_\eps$. This allows the appearance in the limit of extra eigenvalues associated with an ordinary differential equation in the segment $(0,1)$, which are  generally  different from the eigenvalues of $-\Delta_N$ in $\Omega$.
Such singular behaviour reflects a general characteristic of boundary value problems with Neumann boundary conditions, the stability of which requires rather strong assumptions on the admissible  domain perturbations, see e.g., \cite{ACJdE}, \cite{ArrLamb}, \cite{lalaneu}. We refer to \cite[p.~420]{C-H} for a classical counterexample.

The aim of the present paper is to clarify how Neumann boundary conditions affect the spectral behaviour of the operator $\Delta^2-\tau \Delta $ on dumbbell domains, by extending the validity of some results   known for the second order operator $-\Delta_N$ to the fourth-order operator $(\Delta^2 - \tau \Delta)_{N(\sigma )}$.

First of all, we  prove that the eigenvalues of problem (\ref{PDE: main problem_eigenvalues})
can be asymptotically decomposed into two families of eigenvalues as
\begin{equation}\label{dec}
(\lambda_n(\Omega_\eps))_{n\geq 1} \approx (\omega_k)_{k\geq 1} \cup (\theta^\eps_l)_{l\geq 1}, \ \ {\rm as }\  \eps \to 0,
\end{equation}
 where $(\omega_k)_{k\geq 1}$ are the eigenvalues  of problem
\begin{equation} \label{PDE: Omega}
\begin{cases}
\Delta^2 w - \tau \Delta w + w = \omega_k\, w, &\text{in $\Omega$},\\
(1-\sigma) \frac{\partial^2 w}{\partial n^2} + \sigma \Delta w = 0, &\textup{on $\partial \Omega$},\\
\tau \frac{\partial w}{\partial n} - (1-\sigma) \Div_{\partial \Omega}(D^2w \cdot n)_{\partial \Omega} - \frac{\partial(\Delta w)}{\partial n} = 0, &\textup{on $\partial \Omega$,}
\end{cases}
\end{equation}
and  $(\theta^\eps_l)_{l\geq 1}$ are the eigenvalues of problem
\begin{equation} \label{PDE: R_eps}
\begin{cases}
\Delta^2 v - \tau \Delta v + v = \theta^\eps_l\, v, &\text{in $R_\eps$},\\
(1-\sigma) \frac{\partial^2 v}{\partial n^2} + \sigma \Delta v = 0, &\textup{on $\Gamma_\eps$},\\
\tau \frac{\partial v}{\partial n} - (1-\sigma) \Div_{\Gamma_\eps}(D^2v \cdot n)_{\Gamma_\eps} - \frac{\partial(\Delta v)}{\partial n} = 0, &\textup{on $\Gamma_\eps$,}\\
v = 0 = \frac{\partial v}{\partial n}, &\text{on $L_\eps$.}
\end{cases}
\end{equation}
The decomposition \eqref{dec} is proved under the assumption that a certain condition on $R_\eps$, called H-Condition, is satisfied. We provide in particular a simple condition on the profile function $g$ which guarantees the validity of the H-Condition.

Thus, in order to analyse the behaviour of $\lambda_n(\Omega_\eps)$ as $\eps \to 0$, it suffices to study $\theta^\eps_l$ as $\epsilon \to 0$. To do so, we need to  pass to the limit in the variational formulation of problem \eqref{PDE: R_eps}. Since the domain $R_\eps$ collapses to a segment as $\eps \to 0$, we use thin domain techniques in order to find the appropriate limiting problem. As in the case of the Laplace operator, the limiting problem depends on the shape of the channel $R_\eps$ via the profile function $g(x)$. More precisely it can be written as follows
\begin{equation}\label{ODE: limit problem}
\begin{cases}
\frac{1 - \sigma^2}{g} (gh'')'' - \frac{\tau}{g}(gh')' + h = \theta h, &\text{in $(0,1)$}\\
h(0)=h(1)=0,&\\
h'(0)=h'(1)=0.&
\end{cases}
\end{equation}
This allows to  prove convergence results for the eigenvalues and eigenfunctions of problem \eqref{PDE: main problem}. The precise statement can be found in Theorem~\ref{lastthm}.
Roughly speaking, Theorem~\ref{lastthm} establishes the following alternative:
\begin{itemize} \item[(A)] either $\lambda_n(\Omega_\eps) \to \omega_k$, for some $k\geq 1$  in which case  the corresponding eigenfunctions  converge in $\Omega$ to the eigenfunctions associated with   $\omega_k$.
\item[(B)] or $\lambda_n(\Omega_\eps) \to \theta_l$ as $\eps \to 0$ for some $l\in \N$  in which case the corresponding eigenfunctions  behave in $R_\eps$ like the eigenfunctions
associated with $ \theta_l$.
\end{itemize}
Moreover, all eigenvalues $\omega_k$ and $\theta_l$ are reached in the limit by the eigenvalues $\lambda_n(\Omega_{\eps})$.

We find it  remarkable that for $\sigma\ne 0$ the limiting equation in (\ref{ODE: limit problem}) is distorted by the coefficient $1-\sigma^2\ne 1$. This phenomenon
shows that  the dumbbell problem for our fourth order problem  \eqref{PDE: main problem_eigenvalues} with $\sigma \ne 0$ is significantly different from the second order problem \eqref{PDE: second problem_eigenvalues} considered in the literature.

We also note that the Dirichlet problem for the operator $\Delta^2u - \tau \Delta u + u$, namely
\begin{equation} \label{PDE: dir}
\begin{cases}
\Delta^2u - \tau \Delta u + u = \lambda \, u,   &\textup{in $\Omega_\eps$,}\\
 u = 0, &\textup{on $\partial \Omega_\eps$,}\\
 \frac{\partial u}{\partial n} = 0, &\textup{on $\partial \Omega_\eps$}
\end{cases}
\end{equation}
is stable in the sense that its  eigenelements converge to those of the operator $\Delta^2- \tau \Delta + I$ in $\Omega$ as $\eps\to 0$. In other words,  as for the Laplace operator, in the case of Dirichlet boundary conditions, no eigenvalues from the channel $R_{\epsilon}$ appear in the limit as $\epsilon \to 0$. In fact, it is well known that Dirichlet eigenvalues on thin domains diverge to $+\infty$ as $\eps \to 0$, because of the Poincar\'e inequality.

In order to prove our results, we study  the convergence of the resolvent operators  $(\Delta^2 - \tau \Delta +I)_{N(\sigma , \tau)}^{-1}$ and this is done by using the notion of  $\E$-convergence, which is  a useful tool  in the analysis  of boundary value problems defined on variable domains, see e.g., \cite{ACL}, \cite{arlacras}, \cite{ArrLamb}.

We point out that, although many papers in the literature have been devoted to the spectral analysis of second order operators with either Neumann or Dirichlet boundary conditions  on dumbbell domains,  see \cite{Arr1}, \cite{Arr2}, \cite{Jimbo1}, \cite{Jimbo2} and references therein, very little seems to be known about these problems for higher order operators. We refer to \cite{taylor} for a recent analysis of
the dumbbell problem in the case of elliptic systems subject to Dirichlet boundary conditions.

Finally, we observe that it would be interesting to provide precise rates of convergence  for the eigenvalues $\lambda_n(\Omega_{\epsilon})$ and the corresponding eigenfunctions as $\epsilon \to 0$ in the spirit of the asymptotic analysis performed e.g., in \cite{Arr2}, \cite{Gady1}, \cite{Gady2}, \cite{Gady3}, \cite{Gady4},  \cite{Jimbo1}, \cite{Jimbo2}  for second order operators. However, in case of higher order operators, this seems a challenging problem and is not addressed here.

The paper is organized as follows. In Section \ref{sec: decomposition} we prove the asymptotic decomposition \eqref{dec} of the eigenvalues $\lambda_n(\Omega_\eps)$. This is achieved in several steps. In Theorem \ref{thm: upper bound} we provide a suitable  upper bound for the eigenvalue $\lambda_n(\Omega_\eps)$.  Then,  in Definition~\ref{def: H condition} we introduce an assumption on the shape of the channel $R_\eps$, called H-Condition,  which is needed to prove a lower bound for $\lambda_n(\Omega_\eps)$ as $\eps \to 0$,  see Theorem~\ref{thm: lower bound}. Finally,   we collect the results of the section in Theorem \ref{thm: eigenvalues decomposition} to deduce a convergence result for the eigenvalues and the eigenfunctions of problem \eqref{PDE: main problem_eigenvalues} under the assumption that the H-Condition holds. In Section \ref{sec: proof H condition regular dumbbells} we  show that a wide class of  regular dumbbell domains satisfy the H-Condition. In Section \ref{sec: thin plates} we study the convergence of the solutions of problem \eqref{PDE: R_eps} as $\eps \to 0$, we  identify the limiting problem in $(0,1)$,  and we prove the spectral convergence of problem \eqref{PDE: R_eps} to problem \eqref{ODE: limit problem}.
Finally, in Section \ref{conclusionsec} we combine the results of the previous sections and prove Theorem~\ref{lastthm}.


\section{Decomposition of the eigenvalues} \label{sec: decomposition}
The main goal of this section is to prove the decomposition of the eigenvalues of problem \eqref{PDE: main problem_eigenvalues} into the two families of eigenvalues coming from \eqref{PDE: Omega} and \eqref{PDE: R_eps}. First of all we note that, since $\Omega_{\epsilon} $, $\Omega $ and $R_{\epsilon }$ are sufficiently regular, by standard spectral theory for differential operators it follows that  the operators associated with the quadratic forms appearing in the weak formulation of problems \eqref{PDE: main problem_eigenvalues}, \eqref{PDE: Omega}, \eqref{PDE: R_eps} have compact resolvents.  Thus, the spectra of such problems are discrete and consist of positive eigenvalues of finite multiplicity. The eigenpairs of problems \eqref{PDE: main problem_eigenvalues}, \eqref{PDE: Omega}, \eqref{PDE: R_eps} will be denoted by $(\lambda_n(\Omega_\eps), \varphi_n^\eps)_{n \geq 1}$, $(\omega_n, \varphi_n^\Omega)_{n \geq 1}$,  $(\theta_n^\eps, \gamma_n^\eps)_{n\geq 1}$ respectively, where    the three families of eigenfunctions $\varphi_n^\eps$, $\varphi_n^\Omega$,  $\gamma_n^\eps$ are complete orthonormal bases of the spaces $L^2(\Omega_{\epsilon})$, $L^2(\Omega )$, $L^2(R_{\epsilon})$ respectively.
Moreover we set $(\lambda_n^\eps)_{n\geq 1} = (\omega_k)_{k \geq 1} \cup (\theta_l^\eps)_{l\geq 1}$, where it is understood that the eigenvalues are arranged in increasing order and repeated according to their multiplicity. In particular if $\omega_k = \theta_l^\eps$ for some $k,l \in \N$, then such an eigenvalue is repeated in the sequence $(\lambda_n^\eps)_{n \geq 1}$ as many times as the sum of the multiplicities of $\omega_k$ and $\theta_l^\eps$. Let us note explicitly that the order in the sequence $(\lambda_n^\eps)_{n\geq 1}$ depends on $\eps$. For each $\lambda_n^\eps$ we define the function  $\phi^\eps_n \in H^2(\Omega) \oplus H^2(R_\eps)$ in the following way:
\begin{equation}
\label{def: phi_n 1}
\phi^\eps_n = \begin{cases}
        \varphi_k^\Omega, &\text{in $\Omega$},\\
        0, &\text{in $R_\eps$},
        \end{cases}
\end{equation}
if $\lambda_n^\eps = \omega_k$, for some $k \in \N$; otherwise
\begin{equation}
\label{def: phi_n 2}
\phi^\eps_n=\begin{cases}
        0, &\text{in $\Omega$},\\
        \gamma_l^\eps, &\text{in $R_\eps$},
        \end{cases}
\end{equation}
if $\lambda_n^\eps = \theta_l^\eps$, for some $l \in \N$. We observe that in the case $\lambda_n^\eps= \omega_k = \theta_l^\eps$ for some $k,l \in \N$, with $\omega_k$ of multiplicity $m_1$ and $\theta_l^\eps$ of multiplicity $m_2$  we agree  to order the eigenvalues (and the corresponding functions $\phi^\eps_n$) by listing first the $m_1$ eigenvalues $\omega_k$, then the remaining $m_2$ eigenvalues $\theta_l^\eps$.

Note that $(\phi^\eps_i, \phi^\eps_j)_{L^2(\Omega_\eps)} = \delta_{ij}$ where  $\delta_{ij}$ is the Kronecker symbol, that is $\delta_{ij}=0$ for $i\ne j$ and $\delta_{ij}=1$ for $i=j$. Note also that  although $\phi_n^\eps$ defined by \eqref{def: phi_n 2} are in $H^2(\Omega_\eps)$ (due to the Dirichlet boundary condition imposed in $L_\eps$), the function $\phi_n^\eps$ defined by  \eqref{def: phi_n 1} do not lie in $H^2(\Omega_\eps)$.
To bypass this problem we define a sequence of functions in $H^2(\Omega_\eps)$  by setting
\[
\xi_n^\eps =
\begin{cases}
E\varphi_k^\Omega, &\text{if $\lambda_n^\eps = \omega_k$,}\\
\phi^\eps_n, &\text{if $\lambda_n^\eps = \theta_l^\eps$},
\end{cases}
\]
where $E$ is a linear continuous extension operator mapping $H^2(\Omega)$ to $H^2(\R^N)$. Then it is easy to verify that for fixed $i,j$, we have  $(\xi^\eps_i, \xi^\eps_j)_{L^2(\Omega_\eps)}=\delta_{ij}+o(1)$ as $\eps \to 0$. Then for fixed $n$ and for  $\eps$ small enough, $\xi_1^\eps,\ldots,\xi_n^\eps$  are linearly independent.

Now we  prove an upper bound for the eigenvalues $\lambda_n(\Omega_\eps)$.
\begin{theorem}[Upper bound] \label{thm: upper bound}
Let $n\geq 1$ be fixed.  The eigenvalues $\lambda_n^\eps$ are uniformly bounded in  $\eps$ and
\begin{equation} \label{eq: upper bound}
\lambda_n(\Omega_\eps) \leq \lambda_n^\eps + o(1), \quad \text{as $\eps \to 0$.}
\end{equation}
\end{theorem}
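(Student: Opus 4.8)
The natural tool is the variational (Courant--Fischer) characterisation
\[
\lambda_n(\Omega_\eps)=\min_{\substack{V\subset H^2(\Omega_\eps)\\ \dim V=n}}\ \max_{0\ne u\in V}\frac{B_{\Omega_\eps}(u,u)}{\|u\|_{L^2(\Omega_\eps)}^2},
\]
so it suffices to exhibit one $n$-dimensional trial subspace on which the Rayleigh quotient does not exceed $\lambda_n^\eps+o(1)$. The plan is to take $V=\mathrm{span}\{\xi_1^\eps,\dots,\xi_n^\eps\}$, which is already known to be $n$-dimensional for $\eps$ small, and to show that in this basis the ``stiffness'' matrix $A_{ij}^\eps:=B_{\Omega_\eps}(\xi_i^\eps,\xi_j^\eps)$ and the ``mass'' matrix $G_{ij}^\eps:=(\xi_i^\eps,\xi_j^\eps)_{L^2(\Omega_\eps)}$ satisfy $A_{ij}^\eps=\lambda_i^\eps\,\delta_{ij}+o(1)$ and $G_{ij}^\eps=\delta_{ij}+o(1)$ as $\eps\to0$. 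Granting this, for $u=\sum_i c_i\xi_i^\eps$ one uses $\lambda_i^\eps\le\lambda_n^\eps$ (for $i\le n$) to get $B_{\Omega_\eps}(u,u)\le(\lambda_n^\eps+o(1))|c|^2$ and $\|u\|_{L^2(\Omega_\eps)}^2=(1+o(1))|c|^2$, the error terms being uniform in $c$ since they come from the matrix entries; the quotient is then $\le\lambda_n^\eps+o(1)$ and \eqref{eq: upper bound} follows from the min--max formula.

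For the matrix estimates I would split $B_{\Omega_\eps}=B_\Omega+B_{R_\eps}$ (the integrals over $\Omega$ and over the channel $R_\eps$). The diagonal and same-region entries come straight from the eigenfunction property together with $L^2$-orthonormality: $B_\Omega(\varphi_k^\Omega,\varphi_j^\Omega)=\omega_k\delta_{kj}$ on $\Omega$ and $B_{R_\eps}(\gamma_l^\eps,\gamma_m^\eps)=\theta_l^\eps\delta_{lm}$ on $R_\eps$. The remaining contributions all live on the vanishing channel: either the ``spillover energy'' $B_{R_\eps}(E\varphi_k^\Omega,E\varphi_{k'}^\Omega)$ of the fixed extensions, or the mixed terms $B_{R_\eps}(E\varphi_k^\Omega,\gamma_l^\eps)$. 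Since each $E\varphi_k^\Omega\in H^2(\R^2)$ is independent of $\eps$ and $|R_\eps|\to0$, absolute continuity of the integral gives $B_{R_\eps}(E\varphi_k^\Omega,E\varphi_k^\Omega)=o(1)$; the mixed terms are then $o(1)$ by the Cauchy--Schwarz inequality for the non-negative form $B_{R_\eps}$, provided the factor $B_{R_\eps}(\gamma_l^\eps,\gamma_l^\eps)^{1/2}=(\theta_l^\eps)^{1/2}$ stays bounded. The same estimates yield $G_{ij}^\eps=\delta_{ij}+o(1)$.

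The uniform boundedness of the channel eigenvalues is the one point requiring a genuine argument, and it is exactly what makes the assertion ``$\lambda_n^\eps$ uniformly bounded'' meaningful. I would establish it by a second min--max, this time for $\theta_l^\eps$ on $R_\eps$, using the $y$-independent trial functions $v(x,y)=h(x)$ with $h\in C^2_c(0,1)$. Such $v$ satisfy $v=\partial v/\partial n=0$ on $L_\eps$, hence are admissible for \eqref{PDE: R_eps}, and since $(1-\sigma)+\sigma=1$ a direct computation gives
\[
\frac{B_{R_\eps}(v,v)}{\|v\|_{L^2(R_\eps)}^2}=\frac{\int_0^1\big((h'')^2+\tau(h')^2+h^2\big)\,g\,dx}{\int_0^1 h^2\,g\,dx},
\]
where the factor $\eps$ from the $y$-integration cancels, so the quotient is independent of $\eps$. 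Choosing $l$ linearly independent such $h$ bounds $\theta_l^\eps$ by a constant $C_l$ uniformly in $\eps$, and with it all the cross terms above. Combining the two matrix estimates with the min--max characterisation then proves the theorem. The principal difficulty, as anticipated, is ensuring that every error term is uniform in $\eps$ simultaneously; this hinges precisely on the uniform bound for $\theta_l^\eps$, and conceptually the construction succeeds because the extended bulk eigenfunctions carry asymptotically no energy into the collapsing channel, so the ``bulk'' modes from $\Omega$ and the ``channel'' modes from $R_\eps$ decouple in the limit.
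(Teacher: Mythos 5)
Your proof is correct and follows essentially the same route as the paper's: both use the span of $\xi_1^\eps,\dots,\xi_n^\eps$ as a trial space in the variational characterisation of $\lambda_n(\Omega_\eps)$, and both reduce the matter to showing that the stiffness and mass matrices in this basis are $\mathrm{diag}(\lambda_1^\eps,\dots,\lambda_n^\eps)+o(1)$ and $\mathbb{I}+o(1)$, the error terms living on the vanishing channel and being controlled by absolute continuity of the integral for the fixed extensions $E\varphi_k^\Omega$ (the paper phrases this by selecting a single normalised element of the span orthogonal to $\varphi_1^\eps,\dots,\varphi_{n-1}^\eps$, which is an equivalent use of the min--max principle). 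The one point where you genuinely diverge is the uniform boundedness of $\lambda_n^\eps$: the paper disposes of it in one line via the counting inequality $\lambda_n^\eps\le\omega_n$ of \eqref{eq: boundedness lambda_n^eps}, which holds simply because the merged sequence contains all the $\omega_k$, whereas you prove a uniform bound on the channel eigenvalues $\theta_l^\eps$ by testing with $y$-independent functions $h\in C^2_c(0,1)$. Your argument is correct (and anticipates the limiting one-dimensional problem of Section~\ref{sec: thin plates}), but it is not needed even for your own cross-term estimate: any $\theta_l^\eps$ that occurs as some $\lambda_i^\eps$ with $i\le n$ automatically satisfies $\theta_l^\eps=\lambda_i^\eps\le\lambda_n^\eps\le\omega_n$, so the Cauchy--Schwarz factor $(\theta_l^\eps)^{1/2}$ is already bounded by the elementary inequality. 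What your extra lemma buys is a bound on \emph{all} $\theta_l^\eps$, not just those among the first $n$ merged eigenvalues, which is more than the theorem requires.
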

\begin{proof}

The fact that $\lambda_n^\eps$ remains bounded as $\eps \to 0$  is an easy consequence of the inequality \begin{equation} \label{eq: boundedness lambda_n^eps}
\lambda_n^\eps \leq \omega_n < \infty,
\end{equation}
which holds by definition of $\lambda_n^\eps$.
In the sequel we write $\perp$ to denote the orthogonality in $L^2$, and $[f_1, \dots, f_m]$ for the linear span of the functions $f_1, \dots, f_m$.

By the variational characterization of the eigenvalues $\lambda_n(\Omega_\eps)$ we have
\begin{multline} \label{eq: lambda_n(Omega_eps)var}
\lambda_n(\Omega_\eps) = \min \left\{ \frac{\displaystyle \int_{\Omega_\eps} (1-\sigma) |D^2\psi|^2 + \sigma |\Delta \psi|^2 + \tau |\nabla \psi|^2 + |\psi|^2 }{\displaystyle\int_{\Omega_\eps} |\psi|^2}  \right.\\
\left. : \text{$\psi \in H^2(\Omega_\eps)$, $\psi \not\equiv 0$ and $\psi \perp \varphi_1^\eps, \dots, \varphi_{n-1}^\eps$}   \right\}.
\end{multline}
Since the functions $\xi^\eps_1,\dots,\xi^\eps_n$ are linearly independent, by a dimension argument there exists $\xi^\eps \in [\xi^\eps_1,\dots,\xi^\eps_n]$ such that $\norma{\xi^\eps}_{L^2(\Omega_\eps)}=1$, and $\xi^\eps \perp \varphi_1^\eps, \dots, \varphi_{n-1}^\eps$.

We can write
$ \xi^\eps = \sum_{i=1}^n \alpha_i \xi_i^\eps$,
for some $\alpha_1,\dots, \alpha_n \in \R$ depending on $\eps$ such that $\sum_{i=1}^n \alpha_i^2 = 1 + o(1)$ as $\eps \to 0$. By using $\xi^\eps$ as a test function in \eqref{eq: lambda_n(Omega_eps)var} we get

\begin{equation} \label{proof: computationsRQ}
\begin{split}
&\lambda_n(\Omega_\eps) \leq \int_{\Omega_\eps} (1-\sigma) |D^2\xi^\eps|^2 + \sigma |\Delta \xi^\eps|^2 + \tau |\nabla \xi^\eps|^2 +|\xi^\eps|^2\\
&= \sum_{i=1}^n \alpha_i^2 \biggl( \int_{\Omega_\eps} (1-\sigma) |D^2\xi_i^\eps|^2 + \sigma  |\Delta\xi_i^\eps|^2 + \tau  |\nabla \xi_i^\eps|^2 + |\xi_i^\eps|^2 \biggr) \\
&+ \sum_{i\neq j}\alpha_i\alpha_j \biggl( \int_{\Omega_\eps} (1-\sigma) (D^2\xi^\eps_i : D^2\xi_j^\eps) + \sigma \Delta \xi_i^\eps \Delta\xi_j^\eps + \tau \nabla\xi_i^\eps \cdot \nabla \xi_j^\eps +  \xi_i^\eps \xi_j^\eps  \biggr).
\end{split}
\end{equation}

By definition of $\xi_i^\eps$ and the absolute continuity of the Lebesgue integral, we have
{\small\[\int_{\Omega_\eps} (1-\sigma) |D^2\xi_i^\eps|^2 + \sigma  |\Delta\xi_i^\eps|^2 + \tau  |\nabla \xi_i^\eps|^2 + |\xi_i^\eps|^2=
\begin{cases}
\omega_k+o(1), & \hbox{if }\textup{$\exists\, k$ s.t. $\lambda_i^\eps=\omega_k$} ,\\
\theta_\eps^l, &\hbox{if }\textup{$\exists\, l$ s.t. $\lambda_i^\eps=\theta_\eps^l$},
\end{cases}
\]}
which implies that
$\int_{\Omega_\eps} (1-\sigma) |D^2\xi_i^\eps|^2 + \sigma  |\Delta\xi_i^\eps|^2 + \tau  |\nabla \xi_i^\eps|^2 + |\xi_i^\eps|^2\leq \lambda_n^\eps+o(1).$

Note that
\[
\begin{split}
&\sum_{i\neq j}\alpha_i\alpha_j \biggl( \int_{\Omega_\eps} (1-\sigma) (D^2\xi^\eps_i : D^2\xi_j^\eps) + \sigma \Delta \xi_i^\eps \Delta\xi_j^\eps + \tau \nabla\xi_i^\eps \cdot \nabla \xi_j^\eps  +  \xi_i^\eps \xi_j^\eps   \biggr)=o(1).
\end{split}
\]

Hence,
$\lambda_n(\Omega_\eps)\leq \sum_{i=1}^n \alpha_i^2 ( \lambda_n^\eps+o(1))+o(1)\leq \lambda_n^\eps+o(1)$
which concludes the proof of \eqref{eq: upper bound}.
\end{proof}

\begin{remark}
Note  that the shape of the channel $R_\eps$ does not play any role in establishing the upper bound. The only fact needed is that the measure of $R_\eps$ tends to $0$ as $\eps \to 0$.
\end{remark}

In the sequel we shall provide a lower bound for the eigenvalues $\lambda_n(\Omega_\eps)$. Before doing so, let us introduce some notation.

\begin{definition}\label{definitionNorm}
Let $\sigma \in (-1,1)$, $\tau \geq 0$. We denote by $H^2_{L_\eps}(R_\eps)$ the space obtained as the closure in $H^2 (R_\eps)$ of $C^{\infty}(\overline{R_{\epsilon}})$ functions which vanish in a neighbourhood of  $L_\eps$.
Furthermore, for any Lipschitz bounded open set $U$  we define
\[
[f]_{H^2_{\sigma,\tau}(U)} = \bigl|(1-\sigma) \norma{D^2 f}_{L^2(U)}^2 + \sigma \norma{\Delta f}_{L^2(U)}^2 + \tau \norma{\nabla f}_{L^2(U)}^2 + \norma{f}_{L^2(U)}^2   \bigr|^{1/2}\, ,
\]
for all $f \in H^2(U)$.
\end{definition}

Note the functions $u$ in $ H^2_{L_\eps}(R_\eps) $ satisfy the conditions $u=0$ and $\nabla u =0$ on $L_{\epsilon}$ in the sense of traces.

\begin{proposition} \label{prop: convergence eigenprojections}
Let $n \in \N$ be such that the following two conditions are satisfied:
\begin{enumerate}[label=(\roman*)]
\item  For all $i=1,\dots,n$,
\begin{equation} \label{prop: lambda_i}
\abs{\lambda_i^\eps - \lambda_i(\Omega_\eps)}\to 0 \quad \quad \text{as $\eps \to 0$,}
\end{equation}
\item There exists $\delta>0$ such that
\begin{equation} \label{prop: lambda_n+1}
\lambda_n^\eps \leq \lambda_{n+1}(\Omega_\eps) - \delta
\end{equation}
for any $\epsilon >0$ small enough.
\end{enumerate}
Let  $P_n$  be the projector from $L^2(\Omega_\eps)$ onto the linear span $[\phi_1^\eps,\dots,\phi_n^\eps]$ defined by
\begin{equation}
P_n g = \sum_{i=1}^n (g, \phi_i^\eps)_{L^2(\Omega_\eps)} \phi_i^\eps\, ,
\end{equation}
for all $g\in L^2(\Omega_\eps)$, where $\phi_i^\eps$ is defined in \eqref{def: phi_n 1}, \eqref{def: phi_n 2}. Then
\begin{equation} \label{prop: thesis}
\norma{\varphi_i^\eps - P_n \varphi_i^\eps}_{H^2(\Omega) \oplus H^2(R_\eps)} \to 0,
\end{equation}
as $\eps \to 0$, for all $i=1,\dots,n$.
\end{proposition}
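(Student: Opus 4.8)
The plan is to compare the eigenfunctions $\varphi_i^\eps$ of the full problem with the span $W_n=[\phi_1^\eps,\dots,\phi_n^\eps]$ by going through the auxiliary functions $\xi_j^\eps\in H^2(\Omega_\eps)$ introduced just before Theorem~\ref{thm: upper bound}. First I would record that these are genuine approximate eigenfunctions of $B_{\Omega_\eps}$: by absolute continuity of the integral and since the $\gamma_l^\eps$ (resp. $\varphi_k^\Omega$) are energy-orthonormal on $R_\eps$ (resp. $\Omega$), one checks $(\xi_i^\eps,\xi_j^\eps)_{L^2(\Omega_\eps)}=\delta_{ij}+o(1)$ and $B_{\Omega_\eps}(\xi_i^\eps,\xi_j^\eps)=\lambda_i^\eps\delta_{ij}+o(1)$, while $\norma{\phi_j^\eps-\xi_j^\eps}_{H^2(\Omega)\oplus H^2(R_\eps)}\to0$ (the two functions differ only by $E\varphi_k^\Omega$ restricted to the shrinking set $R_\eps$). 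Since $B_{\Omega_\eps}(\varphi_i^\eps,\varphi_i^\eps)=\lambda_i(\Omega_\eps)$ stays bounded by Theorem~\ref{thm: upper bound}, coercivity also gives a uniform $H^2(\Omega_\eps)$ bound on $\varphi_i^\eps$.

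The core of the argument is a trace/optimisation step. Expanding $\xi_j^\eps=\sum_m d_{jm}\varphi_m^\eps$ with $d_{jm}=(\xi_j^\eps,\varphi_m^\eps)_{L^2}$ and setting $w_m=\sum_{j=1}^n d_{jm}^2$, one has $\sum_m w_m=n+o(1)$, $0\le w_m\le1+o(1)$, and Parseval in the energy form gives $\sum_m\lambda_m(\Omega_\eps)w_m=\sum_{j=1}^n B_{\Omega_\eps}(\xi_j^\eps,\xi_j^\eps)=\sum_{j=1}^n\lambda_j^\eps+o(1)$. By hypothesis (i) the right-hand side equals $\sum_{m=1}^n\lambda_m(\Omega_\eps)+o(1)$, so the weights $(w_m)$ nearly minimise $\sum_m\lambda_m(\Omega_\eps)w_m$ under the above constraints. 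Rearranging yields $\sum_{m>n}\lambda_m(\Omega_\eps)w_m=\sum_{m\le n}\lambda_m(\Omega_\eps)(1-w_m)+o(1)$; bounding the right side above by $\lambda_n(\Omega_\eps)\sum_{m\le n}(1-w_m)$ and the left side below by $\lambda_{n+1}(\Omega_\eps)\sum_{m>n}w_m$, and using $\sum_{m\le n}(1-w_m)=\sum_{m>n}w_m+o(1)$, gives $(\lambda_{n+1}(\Omega_\eps)-\lambda_n(\Omega_\eps))\sum_{m>n}w_m\le o(1)$. The gap hypothesis (ii) together with (i) makes $\lambda_{n+1}(\Omega_\eps)-\lambda_n(\Omega_\eps)\ge\delta/2>0$ eventually, whence $\sum_{m>n}w_m\to0$; substituting back also gives $\sum_{m>n}\lambda_m(\Omega_\eps)w_m\to0$.

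Since $\sum_{m>n}w_m=\sum_{j\le n}\norma{\Pi_{>n}\xi_j^\eps}_{L^2}^2\to0$, where $\Pi_{>n}$ is the $L^2$-projection onto $[\varphi_m^\eps]_{m>n}$, the $n\times n$ matrix $D=(d_{jm})_{j,m\le n}$ satisfies $DD^T=I+o(1)$, hence $D^TD=I+o(1)$. This already yields the $L^2$ statement, $\norma{\varphi_i^\eps-P_n\varphi_i^\eps}_{L^2}^2=1-\sum_{j\le n}(\varphi_i^\eps,\phi_j^\eps)^2=1-(D^TD)_{ii}+o(1)\to0$, using $(\varphi_i^\eps,\phi_j^\eps)=d_{ji}+o(1)$. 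For the $H^2$ statement I would test the energy of $z_\eps:=\varphi_i^\eps-\sum_{j\le n}(\varphi_i^\eps,\phi_j^\eps)\xi_j^\eps\in H^2(\Omega_\eps)$, which differs from $\varphi_i^\eps-P_n\varphi_i^\eps$ by $o(1)$ in $H^2(\Omega)\oplus H^2(R_\eps)$. Using the eigenfunction relation $B_{\Omega_\eps}(\varphi_i^\eps,\xi_j^\eps)=\lambda_i(\Omega_\eps)d_{ji}$, the almost-diagonal energy of the $\xi_j^\eps$, and the identity $\sum_{m\le n}\lambda_m(\Omega_\eps)d_{jm}d_{j'm}=\lambda_j^\eps\delta_{jj'}+o(1)$ (valid because the tail $\sum_{m>n}\lambda_m(\Omega_\eps)w_m\to0$ kills the remainder), one obtains $\sum_{j\le n}\lambda_j^\eps(\varphi_i^\eps,\phi_j^\eps)^2=\lambda_i^\eps+o(1)$ and hence $B_{\Omega_\eps}(z_\eps,z_\eps)=\lambda_i(\Omega_\eps)-2\lambda_i(\Omega_\eps)+\lambda_i^\eps+o(1)=o(1)$ by (i).

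Finally, $B_{\Omega_\eps}(z_\eps,z_\eps)\to0$ together with $\norma{z_\eps}_{L^2}\to0$ must be upgraded to convergence in $H^2(\Omega)\oplus H^2(R_\eps)$: the pointwise algebraic coercivity $(1-\sigma)|D^2v|^2+\sigma|\Delta v|^2\ge c_\sigma|D^2v|^2$, with $c_\sigma>0$ independent of the domain, controls $\norma{D^2 z_\eps}_{L^2}$ separately on $\Omega$ and on $R_\eps$, and the first-order term is recovered by interpolation. I expect the main obstacles to be exactly the two non-routine points: the trace/optimisation step, where the gap hypothesis (ii) is converted into simultaneous decay of $\sum_{m>n}w_m$ and of the tail energy $\sum_{m>n}\lambda_m(\Omega_\eps)w_m$ (both needed, respectively, for the $L^2$ and the energy conclusions); and the recovery of the first-order term on the degenerating channel $R_\eps$ when $\tau=0$, where the interpolation constant is not uniform and one has to invoke the thin-domain structure rather than a naive interpolation inequality.
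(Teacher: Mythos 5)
Your proposal is correct in its essential content, but it reaches the conclusion by a genuinely different route from the paper's. The paper passes to subsequences, groups the limiting values $\lambda_1,\dots,\lambda_n$ into distinct clusters $\widetilde{\lambda}_1<\dots<\widetilde{\lambda}_s$, and proves by induction on the cluster index --- using the min--max principle and the gap between consecutive \emph{distinct} limits at every level --- that each approximate eigenfunction $\xi_i^\eps$ is asymptotically captured by the exact eigenprojector $Q_r$ of its own cluster; the proposition then follows by assembling the $Q_r$ into $\widetilde{Q}_n$ and inverting the almost-orthogonal change-of-basis matrix of Remark \ref{rmk: orthogonal matrix A}. You replace the entire induction by a single trace identity: computing $\sum_{j\le n}B_{\Omega_\eps}(\xi_j^\eps,\xi_j^\eps)$ once as $\sum_{j\le n}\lambda_j^\eps+o(1)$ and once as $\sum_m\lambda_m(\Omega_\eps)w_m$, so that hypothesis (i) and the single gap (ii) force $\sum_{m>n}w_m\to 0$ \emph{and} $\sum_{m>n}\lambda_m(\Omega_\eps)w_m\to 0$; these give $DD^T=I+o(1)$, hence the $L^2$ statement, and then the energy expansion of $z_\eps$ gives $B_{\Omega_\eps}(z_\eps,z_\eps)=\lambda_i^\eps-\lambda_i(\Omega_\eps)+o(1)=o(1)$. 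I have checked the nonobvious ingredients (the uniform Bessel bound $w_m\le 1+o(1)$ via the Gram matrix, the sign bookkeeping in $\sum_{m\le n}\lambda_m(1-w_m)\le\lambda_n\sum_{m\le n}(1-w_m)+o(1)$, the passage from $DD^T=I+o(1)$ to $D^TD=I+o(1)$, and the identity $D^T\Lambda^\eps D=\Lambda+o(1)$ which needs precisely the decay of the tail energy) and they all go through. What the paper's clusterwise induction buys is the finer, cluster-by-cluster matching of eigenfunctions, which is what is actually reused later in Step 1 of the proof of Theorem \ref{thm: lower bound}; your global argument is shorter, uses only the gap at level $n$, and delivers exactly the coarser statement the proposition asks for.

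The one genuinely delicate point is the one you flag yourself: for $\tau=0$, upgrading $B_{\Omega_\eps}(z_\eps,z_\eps)\to 0$ and $\norma{z_\eps}_{L^2}\to 0$ to control of $\norma{\nabla z_\eps}_{L^2(R_\eps)}$ cannot be done by a naive interpolation or norm-equivalence inequality, since the constant in $\norma{\nabla v}_{L^2}^2\le C\,\norma{v}_{L^2}\bigl(\norma{v}_{L^2}+\norma{D^2v}_{L^2}\bigr)$ degenerates on the thin channel (test $v(x,y)=y$ on $(0,1)\times(0,\eps)$, for which the ratio blows up like $\eps^{-2}$). You should be aware, however, that the paper's own proof makes the identical step without comment (``$[\chi_{\eps_k}]_{H^2_{\sigma,\tau}(\Omega_{\eps_k})}=o(1)$, so that also $\norma{\chi_{\eps_k}}_{H^2(\Omega_{\eps_k})}=o(1)$''), so this is a subtlety shared with the paper rather than a defect specific to your approach; on the fixed component $\Omega$, and for $\tau>0$ everywhere, the step is unproblematic.
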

\begin{proof}
By \eqref{eq: upper bound} and \eqref{eq: boundedness lambda_n^eps} we can extract a subsequence from both the sequences $(\lambda_i^\eps)_{\eps>0}$ and $(\lambda_i(\Omega_\eps))_{\eps>0}$ such that
\[
\lambda_i^{\eps_k} \to \lambda_i,\ \ {\rm and}\ \
\lambda_i(\Omega_{\eps_k}) \to \widehat{\lambda}_i,
\]
as $k\to \infty$, for all $i=1,\dots,n+1$.\\
By assumption we have $\lambda_i = \widehat{\lambda}_i$ for all $i=1,\dots,n$. Thus, by passing to the limit as $\eps \to 0$ in \eqref{eq: upper bound} (with $n$ replaced by $n+1$) and in \eqref{prop: lambda_n+1}, we get
\[ \lambda_n \leq \widehat{\lambda}_{n+1} - \delta \leq \lambda_{n+1} - \delta. \]

We rewrite $\lambda_1,\dots,\lambda_n$ without repetitions due to multiplicity in order to get a new sequence
\begin{equation} \label{proof: nonoverlappeigenvalues}
\widetilde{\lambda}_1< \widetilde{\lambda}_2<\dots< \widetilde{\lambda}_s = \lambda_n
\end{equation}
and set  $\widetilde{\lambda}_{s+1}:= \widehat{\lambda}_{n+1} \leq \lambda_{n+1}$. Thus, by assumption \eqref{prop: lambda_n+1} we have that
\begin{equation} \label{proof: nonoverlappeigenvalues2}
\widetilde{\lambda}_s < \widetilde{\lambda}_{s+1}.
\end{equation}
For each $r=1,\dots,s$, let $\widetilde{\lambda}_r = \lambda_{i_r} = \dots = \lambda_{j_r}$, for some $i_r \leq j_r$, $i_r, j_r \in \{1,\dots,n \}$, where it is understood that $j_r - i_r + 1$ is the multiplicity of $\widetilde{\lambda}_r$. Furthermore, we define the eigenprojector $Q_r$ from $L^2(\Omega_\eps)$ onto the linear span $[\varphi_{i_r}^\eps, \dots, \varphi_{j_r}^\eps]$ by
\begin{equation} \label{proof: def Q_r}
Q_r g = \sum_{i=i_r}^{j_r} (g, \varphi_{i_r}^\eps)_{L^2(\Omega_\eps)} \varphi_{i_r}^\eps.
\end{equation}
We now proceed to prove the following\\ \smallskip

\noindent\emph{Claim:} $\norma{\xi_i^{\eps_k} - Q_r \xi_i^{\eps_k}}_{H^2(\Omega_{\eps_k})} \to 0$ as $\eps \to 0$, for all $i_r \leq i \leq j_r$ and $r \leq s$.

\noindent Let us prove it by induction on $1 \leq r \leq s$.\\
If $r=1$, we define the function
\[
\chi_{\eps_k} = \xi_i^{\eps_k} - Q_1 \xi_i^{\eps_k} = \xi_i^{\eps_k} - \sum_{l=1}^{j_1} (\xi_i^{\eps_k}, \varphi_l^{\eps_k})_{L^2(\Omega_{\eps_k})} \varphi_l^{\eps_k}.
\]
Then $\chi_{\eps_k} \in H^2(\Omega_{\eps_k})$, $(\chi_{\eps_k} , \varphi_l^{\eps_k})_{L^2(\Omega_{\eps_k})}= 0$ for all $l=1,\dots, j_1$ and by the min-max representation of $\lambda_2(\Omega_{\eps_k})$ we have that
\begin{equation} \label{proof: bigger lambda_2}
[\chi_{\eps_k}]^2_{H^2_{\sigma,\tau}(\Omega_{\eps_k})} \geq \lambda_2(\Omega_{\eps_k)} \norma{\chi_{\eps_k}}^2_{L^2(\Omega_{\eps_k})} \geq \widetilde{\lambda}_2 \norma{\chi_{\eps_k}}^2_{L^2(\Omega_{\eps_k})} - o(1).
\end{equation}
On the other hand, it is easy to prove by definition of $\chi_{\eps_k}$ that
\begin{multline}
\int_{\Omega_{\eps_k}} (1-\sigma) \big(D^2\chi_{\eps_k} : D^2 \psi\big) + \sigma \Delta \chi_{\eps_k} \Delta\psi + \tau \nabla\chi_{\eps_k}\cdot \nabla \psi + \chi_{\eps_k}\psi \, dx\\
= \lambda_1(\Omega_{\eps_k})\int_{\Omega_{\eps_k}} \chi_{\eps_k} \psi \, dx + o(1)
\end{multline}
for all $\psi\in H^2(\Omega_{\eps_k})$. This in particular implies that
\begin{equation} \label{proof: chi_eps equality}
[\chi_{\eps_k}]^2_{H^2_{\sigma,\tau}(\Omega_{\eps_k})} = \lambda_1(\Omega_{\eps_k})  \norma{\chi_{\eps_k}}^2_{L^2(\Omega_{\eps_k})} +o(1)
\end{equation}
and consequently,
\begin{equation} \label{proof: less lambda_1}
[\chi_{\eps_k}]^2_{H^2_{\sigma,\tau}(\Omega_{\eps_k})} \leq \widetilde{\lambda}_1 \norma{\chi_{\eps_k}}^2_{L^2(\Omega_{\eps_k})} + o(1).
\end{equation}
Hence, inequalities \eqref{proof: bigger lambda_2}, \eqref{proof: less lambda_1} imply that
\[
\widetilde{\lambda}_2 \norma{\chi_{\eps_k}}^2_{L^2(\Omega_{\eps_k})} - o(1) \leq \widetilde{\lambda}_1 \norma{\chi_{\eps_k}}^2_{L^2(\Omega_{\eps_k})} + o(1),
\]
which implies that $\norma{\chi_{\eps_k}}_{L^2(\Omega_{\eps_k})} = o(1)$ (otherwise we would have $\widetilde{\lambda}_2 \leq \widetilde{\lambda}_1 + o(1)$, against \eqref{proof: nonoverlappeigenvalues}). Finally, equation \eqref{proof: chi_eps equality} implies that $[\chi_{\eps_k}]_{H^2_{\sigma,\tau}(\Omega_{\eps_k})} = o(1)$, so that also $\norma{\chi_{\eps_k}}_{H^2(\Omega_{\eps_k})}= o(1)$.

Let $r>1$ and assume by induction hypothesis that
\begin{equation} \label{proof: ind_hyp}
\norma{\xi^{\eps_k}_i - Q_t \xi^{\eps_k}_i}_{H^2(\Omega_{\eps_k})} \to 0
\end{equation}
as $k \to \infty$, for all $i_t \leq i \leq j_t$ and for all $t=1,\dots,r-1$. We have to prove that \eqref{proof: ind_hyp} holds also for $t=r$. Let $i_r \leq i \leq j_r$ and let $\chi_{\eps_k} = \xi_i^{\eps_k} - Q_r \xi_i^{\eps_k}$. Then
\begin{equation} \label{proof: almost orthog}
(\chi_{\eps_k}, \varphi_h^{\eps_k})_{L^2(\Omega_{\eps_k})} \to 0 \quad \text{as $k \to \infty$, for all $h=1,\dots,j_r$ }.
\end{equation}
Indeed, if $h \in \{i_r, \dots, j_r\}$ then by definition of $\chi_{\eps_k}$, $(\chi_{\eps_k}, \varphi_h^{\eps_k})_{L^2(\Omega_{\eps_k})} = 0$. Otherwise, if $h < i_r$, note that the function $\varphi_h^{\eps_k}$ satisfies
\begin{multline*}
\int_{\Omega_{\eps_k}} (1-\sigma) \left( D^2\varphi_h^{\eps_k} : D^2\psi \right) + \sigma \Delta \varphi_h^{\eps_k} \Delta \psi + \tau \nabla \varphi_h^{\eps_k} \nabla\psi + \varphi_h^{\eps_k} \psi\, dx \\
= \lambda_h(\Omega_{\eps_k}) \int_{\Omega_{\eps_k}} \varphi_h^{\eps_k} \psi\,dx\, ,
\end{multline*}
for all $\psi \in H^2(\Omega_{\eps_k})$, briefly
$
B_{\Omega_{\eps_k}}(\varphi_h^{\eps_k}, \psi) = \lambda_h(\Omega_{\eps_k})(\varphi_h^{\eps_k}, \psi)_{L^2(\Omega_{\eps_k})}\, ,
$
for all $\psi \in H^2(\Omega_{\eps_k})$, where $B_U$ denotes the quadratic form associated with the operator
$\Delta^2-\tau\Delta +I$
on an open set $U$.  Similarly,
$
B_{\Omega_{\eps_k}}(\xi_i^{\eps_k}, \psi) = \lambda_i^{\eps_k}(\xi_i^{\eps_k}, \psi)_{L^2(\Omega_{\eps_k})} + o(1)
$
for all $\psi \in H^2(\Omega_{\eps_k})$. Thus,
$\lambda_h(\Omega_{\eps_k})(\varphi_h^{\eps_k}, \xi_i^{\eps_k})_{L^2(\Omega_{\eps_k})} = \lambda_i^{\eps_k}(\xi_i^{\eps_k}, \varphi_h^{\eps_k})_{L^2(\Omega_{\eps_k})} + o(1)
$
which implies
\begin{equation}
\label{proof: difference eigenvalues}
( \lambda_h(\Omega_{\eps_k}) - \lambda_i^{\eps_k}) (\varphi_h^{\eps_k}, \xi_i^{\eps_k})_{L^2(\Omega_{\eps_k})} = o(1)
\end{equation}
and since $( \lambda_h(\Omega_{\eps_k}) - \lambda_i^{\eps_k}) \to (\widetilde{\lambda}_h - \widetilde{\lambda}_i) \neq 0$ by assumption, by \eqref{proof: difference eigenvalues} we deduce that $(\varphi_h^{\eps_k}, \xi_i^{\eps_k})_{L^2(\Omega_{\eps_k})} = o(1)$ as $\eps_k \to 0$, for all $h=1,\dots,j_r$, which implies \eqref{proof: almost orthog}.

As in the case $r=1$ we may deduce that
\begin{equation} \label{proof: bigger lambda_r+1}
[\chi_{\eps_k}]^2_{H^2_{\sigma,\tau}(\Omega_{\eps_k})} \geq \widetilde{\lambda}_{r+1} \norma{\chi_{\eps_k}}^2_{L^2(\Omega_{\eps_k})} - o(1).
\end{equation}
On the other hand, by definition of $\chi_{\eps_k}$ we have
\begin{equation} \label{proof: less lambda_r}
[\chi_{\eps_k}]^2_{H^2_{\sigma,\tau}(\Omega_{\eps_k})} \leq \widetilde{\lambda}_r \norma{\chi_{\eps_k}}^2_{L^2(\Omega_{\eps_k})} + o(1).
\end{equation}
By \eqref{proof: bigger lambda_r+1}, \eqref{proof: less lambda_r} and \eqref{proof: nonoverlappeigenvalues} it must be $\norma{\chi_{\eps_k}}^2_{L^2(\Omega_{\eps_k})} = o(1)$ and by \eqref{proof: less lambda_r} we deduce that $[\chi_{\eps_k}]^2_{H^2_{\sigma,\tau}(\Omega_{\eps_k})} = o(1)$, hence  $\norma{\chi_{\eps_k}}_{H^2(\Omega_{\eps_k})} \to 0$,
as $k\to \infty$. This concludes the proof of the Claim.\\

Now define the projector $\widetilde{Q}_n$ from $L^2(\Omega_\eps)$ into the linear span $[\varphi_1^{\eps}, \dots, \varphi_n^{\eps}]$ by
\[
\widetilde{Q}_n g = \sum_{i=1}^n (g,\varphi_i^\eps)_{L^2(\Omega_\eps)} \varphi_i^\eps.
\]
Then, as a consequence of the Claim we have that
\begin{equation}
\label{convergence xi}
\norma{\xi_i^{\eps_k} - \widetilde{Q}_n \xi_i^{\eps_k}}_{H^2(\Omega_{\eps_k})} \to 0
\end{equation}
as $k \to \infty$, for all $i=1,\dots,n$. Indeed for all indexes $i=1,\dots,n$ there exists $1 \leq r\leq s$ such that $i_r \leq i \leq j_r$; let assume for simplicity that $r=1$. Then we have $\norma{\xi_i^{\eps_k} - Q_1 \xi_i^{\eps_k}}_{H^2(\Omega_{\eps_k})} \to 0$ as $k\to \infty$; and also
\[
\norma{\xi_i^{\eps_k} - \widetilde{Q}_n \xi_i^{\eps_k}}_{H^2(\Omega_{\eps_k})} \leq \norma{\xi_i^{\eps_k} - Q_1 \xi_i^{\eps_k}}_{H^2(\Omega_{\eps_k})} + \sum_{l > j_1}^n \big\lvert(\xi_i^{\eps_k}, \varphi_l^{\eps_k})_{L^2(\Omega_{\eps_k})}\big\rvert \norma{\varphi_l^{\eps_k}}_{H^2(\Omega_{\eps_k})}
\]
and the right-hand side tends to 0 as $k \to \infty$ because $\norma{\varphi_l^{\eps_k}}_{H^2(\Omega_{\eps_k})}$ is uniformly bounded in $k$ and $(\xi_i^{\eps_k}, \varphi_l^{\eps_k})_{L^2(\Omega_{\eps_k})} \to 0$ as $k\to \infty$ (to see this it is sufficient to argue as in the proof of \eqref{proof: difference eigenvalues}). Moreover, since $\norma{\xi_i^{\eps_k} - \phi_i^{\eps_k}}_{H^2(\Omega) \oplus H^2(R_{\eps_k})} \to 0$ as $k \to \infty$ for all $i=1,\dots,n$, we also have $\norma{\phi_i^{\eps_k} - \widetilde{Q}_n \phi_i^{\eps_k}}_{H^2(\Omega) \oplus H^2(R_{\eps_k})} \to 0$
as $k \to \infty$, for all $i=1,\dots,n$. Thus $(\widetilde{Q}_n \phi_1^{\eps_k}, \dots,$ $ \widetilde{Q}_n \phi_n^{\eps_k} )$ is a basis in $(L^2(\Omega_{\eps_k})^n)$ for $[\varphi_1^{\eps_k}, \dots, \varphi_n^{\eps_k}]$. Hence,
$
\varphi_i^{\eps_k} = \sum_{l=1}^n a_{li}^{\eps_k} \widetilde{Q}_n \phi_l^{\eps_k}
$
for some coefficients $a_{li}^{\eps_k} = (\varphi_i^{\eps_k}, \phi_l^{\eps_k})_{L^2(\Omega_{\eps_k})} + o(1)$ as $k\to\infty$. Then for all $i =1,\dots,n$ we have
\begin{multline*}
\norma{\varphi_i^{\eps_k} - P_n \varphi_i^{\eps_k}}_{H^2(\Omega) \oplus H^2(R_{\eps_k}\!)}\\
 = \bigg\lVert \sum_{l=1}^n (\varphi_i^{\eps_k}, \phi_l^{\eps_k})_{L^2} [\phi_l^{\eps_k} - \widetilde{Q}_n \phi_l^{\eps_k}] + o(1) \sum_{l=1}^n \widetilde{Q}_n \phi_l^{\eps_k} \bigg \rVert_{H^2(\Omega) \oplus H^2(R_{\eps_k}\!)}
\end{multline*}
and the right-hand side tends to 0 as $k \to \infty$.
\end{proof}

\begin{remark}
\label{rmk: orthogonal matrix A}
In the previous proof one could prove that the matrix
$A = (a_{li} ^{\epsilon_k}    )_{l,i=1,\dots,n}  $
is almost orthogonal, in the sense that $A A^t = A^t A = \mathbb{I} + o(1)$ as $k \to \infty$. To prove this it is sufficient to show that the matrix
 $\tilde A= \bigl((\phi^{\eps_k}_l, \varphi^{\eps_k}_m)_{L^2(\Omega_{\eps_k})}\bigr)_{l,m=1,\dots,n}$
is almost orthogonal.   Let $l$ be fixed and note that
$
\phi^{\eps_k}_l = \sum_{m=1}^n (\phi^{\eps_k}_l, \varphi_m^{\eps_k})_{L^2(\Omega_{\eps_k})} \varphi^{\eps_k}_m + (\mathbb{I}-\widetilde{Q}_m) \phi^{\eps_k}_l,
$
hence, by \eqref{convergence xi} we deduce that
\begin{equation}\label{almost orthogonal matrix}
\delta_{li} = (\phi^{\eps_k}_l, \phi^{\eps_k}_i)_{L^2(\Omega_{\eps_k})} = \sum_{m=1}^n (\phi^{\eps_k}_l, \varphi^{\eps_k}_m)_{L^2(\Omega_{\eps_k})} (\varphi^{\eps_k}_m, \phi^{\eps_k}_i)_{L^2(\Omega_{\eps_k})} + o(1)\, ,
\end{equation}
as $k \to \infty$.
Note that we can rewrite \eqref{almost orthogonal matrix} as  $\tilde A \tilde A^t = \mathbb{I} + o(1)$, and in a similar way we also get  that $\tilde A^t \tilde A = \mathbb{I} + o(1)$, concluding the proof.
\end{remark}

In the sequel we shall need the following lemma.

\begin{lemma} \label{lemma: equation for chi} Let  $1 \leq i \leq j \leq n$.     Assume that $\widehat \lambda \in \R$ is such that, possibly passing to a subsequence,  $\lambda_m(\Omega_\epsilon )\to  \widehat{\lambda}$ as $\epsilon \to 0$ for all $m \in \{i, \dots, j \}$.
If  $\chi_{\eps} \in [\varphi_i^{\eps}, \dots, \varphi_j^{\eps}]$, $\norma{\chi_{\eps}}_{L^2(\Omega_{\eps})}=1$ and $\chi_{\eps}|_{\Omega} \rightharpoonup \chi$ in $H^2(\Omega)$
then
\begin{equation}
\label{eq: equation for chi}
\int_{\Omega} (1-\sigma) (D^2 \chi : D^2 \psi) + \sigma \Delta \chi \Delta \psi + \tau \nabla \chi \cdot\nabla \psi + \chi \psi\, dx = \widehat{\lambda} \int_{\Omega} \chi \psi\, dx\, ,
\end{equation}
for all $\psi \in H^2(\Omega)$.
\end{lemma}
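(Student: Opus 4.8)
The plan is to pass to the limit as $\eps \to 0$ in the weak formulation solved by $\chi_\eps$ on the whole dumbbell, testing against an $H^2(\Omega_\eps)$-extension of $\psi$ and then splitting each integral into its bulk part on $\Omega$ and its part on the collapsing channel $R_\eps$.

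First I would record the equation satisfied by $\chi_\eps$ on $\Omega_\eps$. Since $\chi_\eps \in [\varphi_i^\eps, \dots, \varphi_j^\eps]$ and $\|\chi_\eps\|_{L^2(\Omega_\eps)} = 1$, we may write $\chi_\eps = \sum_{m=i}^j c_m^\eps \varphi_m^\eps$ with $\sum_{m=i}^j (c_m^\eps)^2 = 1$, by orthonormality of the eigenfunctions in $L^2(\Omega_\eps)$. Using that each $\varphi_m^\eps$ solves $B_{\Omega_\eps}(\varphi_m^\eps, \phi) = \lambda_m(\Omega_\eps)(\varphi_m^\eps, \phi)_{L^2(\Omega_\eps)}$ for all $\phi \in H^2(\Omega_\eps)$, linearity gives $B_{\Omega_\eps}(\chi_\eps, \phi) = \sum_{m=i}^j c_m^\eps \lambda_m(\Omega_\eps)(\varphi_m^\eps, \phi)_{L^2(\Omega_\eps)}$. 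Because the finitely many eigenvalues $\lambda_m(\Omega_\eps)$, $m \in \{i, \dots, j\}$, all converge to $\widehat\lambda$, while $|c_m^\eps| \le 1$ and $\|\varphi_m^\eps\|_{L^2(\Omega_\eps)} = 1$, replacing each $\lambda_m(\Omega_\eps)$ by $\widehat\lambda$ costs only $o(1)\|\phi\|_{L^2(\Omega_\eps)}$; hence $B_{\Omega_\eps}(\chi_\eps, \phi) = \widehat\lambda(\chi_\eps, \phi)_{L^2(\Omega_\eps)} + o(1)\|\phi\|_{L^2(\Omega_\eps)}$, uniformly in $\phi$.

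Next, for a given $\psi \in H^2(\Omega)$, I would take $\phi = E\psi$ (restricted to $\Omega_\eps$), with $E$ the extension operator introduced earlier, and decompose $\Omega_\eps = \Omega \cup R_\eps \cup L_\eps$. On $\Omega$ we have $E\psi = \psi$, so the bulk contribution to the left-hand side is $B_\Omega(\chi_\eps|_\Omega, \psi)$, which tends to $B_\Omega(\chi, \psi)$ since $B_\Omega(\cdot, \psi)$ is a bounded linear functional on $H^2(\Omega)$ and $\chi_\eps|_\Omega \rightharpoonup \chi$; likewise $(\chi_\eps|_\Omega, \psi)_{L^2(\Omega)} \to (\chi, \psi)_{L^2(\Omega)}$. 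The remaining task is to show that the channel contributions $B_{R_\eps}(\chi_\eps, E\psi)$ and $(\chi_\eps, E\psi)_{L^2(R_\eps)}$ vanish. For this I would first establish a uniform $H^2(\Omega_\eps)$ bound on $\chi_\eps$: from the computation above, $[\chi_\eps]^2_{H^2_{\sigma,\tau}(\Omega_\eps)} = \sum_{m} (c_m^\eps)^2 \lambda_m(\Omega_\eps) \le \max_{m} \lambda_m(\Omega_\eps)$, which is bounded uniformly in $\eps$ by Theorem~\ref{thm: upper bound}, and coercivity of the form then controls $\|\chi_\eps\|_{H^2(\Omega_\eps)}$, in particular $\|\chi_\eps\|_{H^2(R_\eps)}$. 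Since $E\psi \in H^2(\R^2)$ is fixed and $|R_\eps| \to 0$, absolute continuity of the Lebesgue integral gives $\|E\psi\|_{H^2(R_\eps)} \to 0$; a Cauchy--Schwarz estimate applied term by term to the bilinear form (with a constant depending only on $\sigma,\tau$) then forces both channel quantities to $0$.

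Collecting these limits in $B_{\Omega_\eps}(\chi_\eps, E\psi) = \widehat\lambda(\chi_\eps, E\psi)_{L^2(\Omega_\eps)} + o(1)$ and letting $\eps \to 0$ yields exactly \eqref{eq: equation for chi}. The only genuinely delicate point is the decay of the channel terms: it is precisely there that the uniform $H^2(\Omega_\eps)$ bound on $\chi_\eps$, inherited from the boundedness of the eigenvalues, must be combined with the shrinking of $R_\eps$; everything else is a routine weak-convergence passage to the limit in the fixed domain $\Omega$.
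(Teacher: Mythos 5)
Your argument is correct and follows essentially the same route as the paper's proof: expand $\chi_\eps$ in the eigenbasis $\varphi_i^\eps,\dots,\varphi_j^\eps$, test the resulting identity with the fixed extension $E\psi$, split each integral into its part on $\Omega$ (handled by weak convergence in $H^2(\Omega)$) and its part on $R_\eps$ (which vanishes since $|R_\eps|\to 0$). You merely make explicit the uniform bound on $[\chi_\eps]_{H^2_{\sigma,\tau}(\Omega_\eps)}$ that justifies the decay of the channel terms, a point the paper leaves implicit.
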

\begin{proof}
Since $\chi_{\eps} \in [\varphi_i^{\eps}, \dots, \varphi_j^{\eps}]$ and $\norma{\chi_{\eps}}_{L^2(\Omega_{\eps})}=1$ there exist coefficients $(a_l(\eps))_{l=i}^j$ such that
$
\chi_{\eps} = \sum_{l=i}^j a_l(\eps) \varphi_l^{\eps}$ and $\sum_{l=i}^j a_l^2(\eps) = 1.$
Note that for all $m \in \{i, \dots, j \}$, possibly passing to a subsequence, there exists $\widehat{\varphi}_m \in H^2(\Omega)$ such that $\varphi_m^{\eps}|_{\Omega} \rightharpoonup \widehat{\varphi}_m$ in $H^2(\Omega)$. Since $\chi_{\eps}|_{\Omega} \rightharpoonup \chi$ in $H^2(\Omega)$ by assumption, we get that $\chi = \sum_{l=i}^j a_l \widehat{\varphi}_l$ in $\Omega$ for some coefficients $(a_l)_{l=i}^j$. Let $\psi \in H^2(\Omega)$ be fixed and consider an extension $\widetilde{\psi} = E \psi \in H^2(\R^N)$. Then
\begin{equation} \label{proof: lemma chi}
\begin{split}
&\int_{\Omega_{\eps}} (1-\sigma) \bigl(D^2\chi_{\eps} : D^2 \widetilde{\psi}\bigr) + \sigma \Delta \chi_{\eps} \Delta \widetilde{\psi} + \tau \nabla \chi_{\eps} \nabla \widetilde{\psi} + \chi_{\eps}\widetilde{\psi}\\
&= \sum_{l=i}^j a_l(\eps) \biggl[ \int_{\Omega_{\eps}} (1-\sigma) \bigl(D^2 \varphi_l^{\eps} : D^2 \widetilde{\psi}\bigr) + \sigma \Delta \varphi_l^{\eps} \Delta \widetilde{\psi} + \tau \nabla \varphi_l^{\eps} \nabla \widetilde{\psi} + \varphi_l^{\eps}\widetilde{\psi}  \biggr]\\
&= \sum_{l=i}^j a_l(\eps) \lambda_l(\Omega_{\eps}) \int_{\Omega_{\eps}} \varphi_l^{\eps} \widetilde{\psi}.
\end{split}
\end{equation}
Then it is possible to pass to the limit in both sides of \eqref{proof: lemma chi} by splitting the integrals over $\Omega_{\eps}$ into an integral over $R_{\eps}$ (that tends to $0$ as $\eps \to 0$) and an integral over $\Omega$. Moreover, the integrals over $\Omega$ will converge to the corresponding integrals in \eqref{eq: equation for chi} as $\eps \to 0$, because of the weak convergence of $\chi_{\eps}$ in $H^2(\Omega)$ and the strong convergence of $E\psi$ to $\psi$ in $H^2(\Omega)$.
\end{proof}

We proceed to prove the lower bound for $\lambda_n(\Omega_\eps)$.
To do so,  we need to add an extra assumption on the shape of $\Omega_{\epsilon}$. Hence,
  we introduce the following condition in the spirit of what is known for the Neumann Laplacian (see e.g., \cite{ArrPhD}, \cite{Arr1}, \cite{AHH}).

\begin{definition}[H-Condition]
\label{def: H condition}
We say that the family of dumbbell domains   $\Omega_\eps$, $\eps>0$,  satisfies the H-Condition if, given functions $u_\eps \in H^2(\Omega_\eps)$ such that $\norma{u_\eps}_{H^2(\Omega_\eps)} \leq R$ for all $\eps>0$, there exist functions  $\bar{u}_\eps \in H^2_{L_\eps}(R_\eps)$ such that
\begin{enumerate}[label=(\roman*)]
\item $\norma{u_\eps - \bar{u}_\eps }_{L^2(R_\eps)} \to 0$ as $\eps \to 0$,
\item $[\bar{u}_\eps]^2_{H^2_{\sigma, \tau}(R_\eps)} \leq [u_\eps]^2_{H^2_{\sigma, \tau}(\Omega_\eps)} + o(1)$ as $\eps \to 0$.
\end{enumerate}
\end{definition}

Recall that $[\cdot ]_{H^2_{\sigma,\tau}}$ is defined above in Definition \ref{definitionNorm}. We will show in Section \ref{sec: proof H condition regular dumbbells} that a wide class of channels $R_\eps$ satisfies the H-Condition.

\begin{theorem}[Lower bound] \label{thm: lower bound}
Assume that the family of dumbbell domains $\Omega_\eps$, $\eps>0$, satisfies the H-Condition.  Then for every $n\in \N$ we have $\lambda_n(\Omega_\eps) \geq \lambda_n^\eps - o(1)$ as $\eps \to 0$.
\end{theorem}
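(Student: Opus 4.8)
The plan is to prove the lower bound via the standard variational (min-max) characterization combined with the H-Condition, which is precisely the tool designed to capture the ``mass'' that the eigenfunctions carry inside the collapsing channel. The key asymmetry between the upper and lower bounds is that for the upper bound (Theorem~\ref{thm: upper bound}) the channel plays no role, whereas for the lower bound we must ensure that no spurious eigenvalues appear and that the contributions from $\Omega$ and from $R_\eps$ can be properly separated; the H-Condition is exactly what guarantees that a test function on $\Omega_\eps$ can be suitably replaced, on the channel, by a function in $H^2_{L_\eps}(R_\eps)$ without increasing the energy (up to $o(1)$) and without losing $L^2$-mass.

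First I would argue by induction on $n$ (or by a contradiction/extraction argument), using the convergences established in Proposition~\ref{prop: convergence eigenprojections} and Lemma~\ref{lemma: equation for chi}. The base of the induction together with the upper bound \eqref{eq: upper bound} lets me assume, passing to a subsequence $\eps_k$, that $\lambda_i(\Omega_{\eps_k}) \to \widehat\lambda_i$ and $\lambda_i^{\eps_k}\to\lambda_i$ with $\widehat\lambda_i \le \lambda_i$ for the relevant indices. The goal is to reverse this inequality, i.e.\ to show $\widehat\lambda_n \ge \lambda_n$, which combined with \eqref{eq: upper bound} gives $\widehat\lambda_n=\lambda_n$ and hence the claim. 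To this end I would take the eigenfunction $\varphi_n^{\eps_k}$ (normalized in $L^2(\Omega_{\eps_k})$) realizing $\lambda_n(\Omega_{\eps_k})$ and analyse its restriction to $\Omega$ and to $R_{\eps_k}$ separately. On $\Omega$, the weak limit of $\varphi_n^{\eps_k}|_\Omega$ is, by Lemma~\ref{lemma: equation for chi}, an eigenfunction of problem \eqref{PDE: Omega} associated with $\widehat\lambda_n$ (or is zero). On the channel, I would apply the H-Condition to $u_\eps=\varphi_n^{\eps_k}$, obtaining $\bar u_{\eps_k}\in H^2_{L_{\eps_k}}(R_{\eps_k})$ that approximates $\varphi_n^{\eps_k}$ in $L^2(R_{\eps_k})$ with no greater energy; since $\bar u_{\eps_k}$ is an admissible test function for problem \eqref{PDE: R_eps}, its Rayleigh quotient is bounded below by the first eigenvalue $\theta_1^{\eps_k}$, and more generally an orthogonalized family produces lower bounds by the $\theta_l^{\eps_k}$.

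The decisive step is to combine these two pieces into a single counting argument: I would build, from $\varphi_1^{\eps_k},\dots,\varphi_n^{\eps_k}$, a family of $n$ approximately $L^2$-orthonormal functions living in the decoupled space $H^2(\Omega)\oplus H^2_{L_\eps}(R_\eps)$, using the H-Condition on the channel parts and the weak-$H^2$ limits on the $\Omega$ parts, and controlling cross terms via the almost-orthogonality recorded in Remark~\ref{rmk: orthogonal matrix A}. Because the energy of this family is bounded above (up to $o(1)$) by $\lambda_n(\Omega_{\eps_k})$ while the decoupled quadratic form splits as the sum of the forms on $\Omega$ and on $R_\eps$, whose eigenvalues are precisely $(\omega_k)$ and $(\theta_l^\eps)$, the min-max principle applied in the decoupled space forces $\lambda_n^{\eps_k}\le \lambda_n(\Omega_{\eps_k})+o(1)$, which is the desired inequality $\lambda_n(\Omega_\eps)\ge\lambda_n^\eps-o(1)$.

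The main obstacle I anticipate is the bookkeeping in this decoupling: I must verify that the $n$ constructed functions remain \emph{linearly independent} (equivalently, approximately orthonormal) in the limit, so that the dimension count in min-max is valid, and simultaneously that passing from $\varphi_n^{\eps_k}$ to $\bar u_{\eps_k}$ on the channel does not destroy orthogonality relations nor inflate the energy beyond $o(1)$. The subtlety is that the H-Condition only controls a \emph{single} function at a time in $L^2(R_\eps)$-norm and energy, so I will need to apply it to each $\varphi_i^{\eps_k}$ and then check that the resulting channel functions $\bar u_{\eps_k}^{(i)}$ inherit the approximate orthogonality of the $\varphi_i^{\eps_k}$ — this is where the $o(1)$ estimates of Proposition~\ref{prop: convergence eigenprojections} and Remark~\ref{rmk: orthogonal matrix A} must be invoked carefully. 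Once the decoupled orthonormal family is in hand, the concluding min-max comparison is routine.
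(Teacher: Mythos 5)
Your setup (subsequence extraction, induction on $n$, Lemma~\ref{lemma: equation for chi} to identify the weak limit on $\Omega$, the H-Condition on the channel) matches the paper's, but your ``decisive step'' --- the decoupled min--max counting argument --- has a genuine gap. The H-Condition, as stated in Definition~\ref{def: H condition}\,(ii), bounds the channel energy of $\bar u_\eps$ by the energy of $u_\eps$ over \emph{all} of $\Omega_\eps$, not just over $R_\eps$ (and this is not an accident: the construction in Section~\ref{sec: proof H condition regular dumbbells} borrows energy from a strip $K_\eps^\gamma\subset\Omega$ near the junction, where a bounded $H^2$ family may concentrate second derivatives). Consequently, if you form the decoupled pair $\Psi_i=(\varphi_i^{\eps}|_\Omega,\bar u_\eps^{(i)})\in H^2(\Omega)\oplus H^2_{L_\eps}(R_\eps)$, the only available bound on its energy is
\[
[\varphi_i^\eps]^2_{H^2_{\sigma,\tau}(\Omega)}+[\varphi_i^\eps]^2_{H^2_{\sigma,\tau}(\Omega_\eps)}+o(1),
\]
in which the energy on $\Omega$ is counted twice; since $\norma{\Psi_i}_{L^2}=1+o(1)$, the Rayleigh quotient of $\Psi_i$ is only controlled by roughly $2\lambda_i(\Omega_\eps)$, and the min--max comparison $\lambda_n^\eps\le\lambda_n(\Omega_\eps)+o(1)$ does not follow. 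The approximate orthonormality you worry about is in fact the easier point; it is the energy bookkeeping that breaks.

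The paper circumvents this by never applying the H-Condition to a function carrying $L^2$-mass in $\Omega$. The induction is organized in two steps. In Step~1 one picks $\chi_{\eps_k}$ inside the span of the eigenfunctions of the presumed coalescing group, orthogonal to $\phi^{\eps_k}_{r+1},\dots,\phi^{\eps_k}_{n}$, and shows via Lemma~\ref{lemma: equation for chi} and the (almost) orthogonality relations of Remark~\ref{rmk: orthogonal matrix A} that its weak $H^2(\Omega)$-limit must vanish, i.e.\ $\norma{\chi_{\eps_k}}_{L^2(\Omega)}\to0$ and $\norma{\chi_{\eps_k}}_{L^2(R_{\eps_k})}\to1$. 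Only then is the H-Condition invoked, and the test function used in the variational characterization of $\lambda^{\eps_k}_{n+1}$ is the extension of $\overline\chi_{\eps_k}$ by zero to $\Omega$: for this function the \emph{global} bound $[\overline\chi_{\eps_k}]^2_{H^2_{\sigma,\tau}(R_{\eps_k})}\le[\chi_{\eps_k}]^2_{H^2_{\sigma,\tau}(\Omega_{\eps_k})}+o(1)=\lambda_n+o(1)$ is exactly the right comparison, and it contradicts the lower bound $\lambda_{n+1}$ coming from orthogonality. Step~2 repeats the scheme with $\varphi^{\eps_k}_{n+1}$ in place of $\chi_{\eps_k}$ to upgrade $\lambda_n<\widehat\lambda_{n+1}$ to $\widehat\lambda_{n+1}=\lambda_{n+1}$. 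If you want to retain your global decoupling argument you would need a localized version of the H-Condition (channel energy of $\bar u_\eps$ controlled by the energy of $u_\eps$ on $R_\eps$ alone, up to $o(1)$), which is strictly stronger than what Definition~\ref{def: H condition} provides and is not established in the paper.
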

\begin{proof}
By Theorem \ref{thm: upper bound} and its proof we know that both  $\lambda_i(\Omega_\eps)$ and $\lambda_i^\eps$ are uniformly bounded in $\eps$. Then, for each subsequence $\eps_k$ we can find a subsequence (which we still call $\eps_k$), sequences of real numbers $(\lambda_i)_{i\in \N}$, $(\widehat{\lambda}_i)_{i\in \N}$, and sequences of $H^2(\Omega)$ functions $(\phi_i)_{i \in \N}$, $(\widehat{\varphi}_i)_{i \in \N}$, such that the following conditions are satisfied:
\begin{enumerate}[label=(\roman*)]
\item $\lambda_i^{\eps_k} \longrightarrow \lambda_i$, for all $i \geq 1$;
\item $\lambda_i(\Omega_{\eps_k}) \longrightarrow \widehat{\lambda}_i$, for all $i \geq 1$;
\item $\xi^{\eps_k}_i|_{\Omega} \longrightarrow \phi_i $ strongly in $H^2(\Omega)$, for all $i \geq 1$;
\item $\varphi_i^{\eps_k}|_{\Omega} \longrightarrow \widehat{\varphi}_i$ weakly in $H^2(\Omega)$, for all $i \geq 1$;
\end{enumerate}
Note that $(iii)$ immediately follows by recalling that $\xi^{\eps_k}_i|_{\Omega}$ either it is zero or it coincides with $\varphi_i^\Omega$. Then $(iv)$ is deduced by the estimate
$\norma{\varphi_i^{\eps_k}}_{H^2(\Omega_{\eps_k})} \leq c\, \lambda_i(\Omega_{\eps_k})$ and by the boundedness of the sequence $\lambda_i(\Omega_{\eps_k})$,  $k \in \N$.

We plan to prove that $\widehat{\lambda}_i = \lambda_i$ for all $i\geq 1$. We do it by induction.
For $i=1$ we clearly have $\lambda_1 = \lambda_1(\Omega) = 1 = \lambda(\Omega_{\eps_k})$ for all $k$; hence, passing to the limit as $k\to\infty$ in the right-hand side of the former equality we get $\lambda_1 = \widehat{\lambda_1}$.
Then, we assume by induction hypothesis that $\widehat{\lambda}_i = \lambda_i$ for all $i=1,\dots,n$ and we prove that $\widehat{\lambda}_{n+1} = \lambda_{n+1}$. There are two possibilities: either $\lambda_n = \lambda_{n+1}$ or $\lambda_n < \lambda_{n+1}$. In the first case we deduce by \eqref{eq: upper bound} that
\[
\lambda_n = \widehat{\lambda}_n \leq \widehat{\lambda}_{n+1} \leq \lambda_{n+1} = \lambda_n,
\]
hence all the inequalities are equalities and in particular $\widehat{\lambda}_{n+1} = \lambda_{n+1}$.
Consequently we can assume without loss of generality that $\lambda_n < \lambda_{n+1}$. In this case we must have $\widehat{\lambda}_{n+1} \in [\lambda_n, \lambda_{n+1}]$ because $\lambda_n=\widehat{\lambda_n}$ and $\lambda_n(\Omega_{\eps_k}) \leq \lambda_{n+1}(\Omega_{\eps_k}) \leq \lambda_{n+1}^{\eps_k} + o(1)$ as $k\to \infty$. Let $r= \max\{\lambda_i : i<n, \lambda_i < \lambda_n \}$. Then $\lambda_r < \lambda_{r+1}= \dots = \lambda_n < \lambda_{n+1}$. In particular we can apply Proposition \ref{prop: convergence eigenprojections} with $n$ replaced by $r$ in order to get
\begin{equation} \label{proof: E5}
\norma{\varphi_i^{\eps_k} - P_r \varphi_i^{\eps_k}}_{H^2(\Omega) \oplus H^2(R_{\eps_k})} \to 0
\end{equation}
as $k \to \infty$, for all $i=1,\dots,r$. We now divide the proof in two steps.
\smallskip

\noindent\emph{Step 1}: we prove that $\lambda_n < \widehat{\lambda}_{n+1}$.\\
Let us assume by contradiction that $\lambda_n = \widehat{\lambda}_{n+1}$; then $\widehat{\lambda}_{r+1}=\dots=\widehat{\lambda}_n=\widehat{\lambda}_{n+1}$. Define the subspace $S$ by $S = [\varphi^{\eps_k}_{r+1}, \dots, \varphi^{\eps_k}_{n+1}]$. Hence, $S$ is $(n-r+1)$-dimensional. We then choose $\chi_{\eps_k} \in S$ with the following properties:
\begin{enumerate}[label=(\Roman*)]
\item $\norma{\chi_{\eps_k}}_{L^2(\Omega_{\eps_k})} = 1$.
\item $\chi_{\eps_k} \perp \phi^{\eps_k}_{r+1}, \dots, \phi^{\eps_k}_{n}$ in $L^2(\Omega_{\eps_k})$.
\end{enumerate}
This choice is possible because $[\phi^{\eps_k}_{r+1}, \dots, \phi^{\eps_k}_{n}]$ is $(n-r)$-dimensional. Moreover, we have that
\begin{equation} \label{proof: E6}
(\,\chi_{\eps_k}, \phi^{\eps_k}_i)_{L^2(\Omega_{\eps_k})} \longrightarrow 0
\end{equation}
as $k\to \infty$, for all $i=1, \dots, r$. To see this, recall that $\chi_{\eps_k} \in S$, hence
\begin{equation}
\label{proof: orthogonality chi_eps}
(\chi_{\eps_k}, \varphi^{\eps_k}_j)_{L^2(\Omega_{\eps_k})} = 0, \quad \forall j \leq r.
\end{equation}
By \eqref{proof: E5} and \eqref{proof: orthogonality chi_eps}, we have
\[
(\chi_{\eps_k}, P_r \varphi^{\eps_k}_j)_{L^2(\Omega_{\eps_k})} \longrightarrow 0, \quad \forall j \leq r
\]
as $k\to \infty$. Thus,
\begin{equation}
\label{proof: matrix times vector}
\sum_{l=1}^r (\varphi_j^{\eps_k}, \phi_l^{\eps_k})_{L^2(\Omega_{\eps_k})} (\chi_{\eps_k}, \phi_l^{\eps_k})_{L^2(\Omega_{\eps_k})} \longrightarrow 0, \quad \forall j \leq r,
\end{equation}
as $k\to \infty$. We can rewrite \eqref{proof: matrix times vector} as $A^t b \to 0$ as $k \to \infty$, where $A$ is the matrix defined in Remark \ref{rmk: orthogonal matrix A} and $b \in \R^r$ is the vector defined by $b_l = ((\chi_{\eps_k}, \phi_l^{\eps_k})_{L^2(\Omega_{\eps_k})})_l$ for all $l \in \{1,\dots,r\}$. Hence, also $A A^t b \to 0$ as $k \to \infty$ and by Remark \ref{rmk: orthogonal matrix A} we deduce that $A A^t b =\bigl(\mathbb{I} + o(1)\bigr) b = b + o(1) \to 0$ as $k \to \infty$, since $b$ is bounded in $k$. This implies that each component of $b$, which is $(\chi_{\eps_k}, \phi_l^{\eps_k})_{L^2(\Omega_{\eps_k})}$ tends to zero as $k \to \infty$, which is \eqref{proof: E6}.

\noindent It is now clear that \eqref{proof: E6} and property (II) of $ \chi_{\eps_k}$ yield
\begin{equation} \label{proof: E7}
(\chi_{\eps_k}, \phi^{\eps_k}_i)_{L^2(\Omega_{\eps_k})} \longrightarrow 0, \quad \text{for all $i=1,\dots,n$}
\end{equation}
as $k\to \infty$. Since $\norma{\chi_{\eps_k}}_{H^2(\Omega)} \leq C \max_{r+1 \leq j \leq n+1} \norma{\varphi_j^{\eps_k}}_{H^2(\Omega)} < \infty$ there exists a function $\chi \in H^2(\Omega)$     such that possibly passing to a subsequence
\begin{equation} \label{proof: E8}
\chi_{\eps_k} |_{\Omega} \rightharpoonup \chi \quad \text{in $H^2(\Omega)$},
\end{equation}
as $k\to \infty$. By \eqref{proof: E7} and \eqref{proof: E8} we deduce that $(\chi, \phi_i)_{L^2(\Omega)} = 0$, for all $i=1,\dots,n$. By Lemma \ref{lemma: equation for chi} $\chi$ is a $n$-th eigenfunction of $(\Delta^2 - \tau \Delta + \mathbb{I})_{N(\sigma)}$ in $\Omega$ associated with $\widetilde{\lambda}_n$ which is orthogonal to $\phi_1, \dots, \phi_n$, among which there are all the possible $n$-th eigenfunctions. Since $\lambda_n < \lambda_{n+1}$, the only way to avoid a contradiction is that $\chi \equiv 0$ in $\Omega$, that is
\begin{equation} \label{proof: E9}
\norma{\chi_{\eps_k}}_{L^2(\Omega)} \to 0, \quad\quad \norma{\chi_{\eps_k}}_{L^2(R_{\eps_k})} \to 1
\end{equation}
as $k\to \infty$. We use now the H-Condition; let us choose a sequence of functions $\overline{\chi}_{\eps_k} \in H^2_{L_{\eps_k}}(R_{\eps_k})$ such that $\norma{\chi_{\eps_k} - \overline{\chi}_{\eps_k}}_{L^2(R_{\eps_k})} \to 0$ as $k\to \infty$ and
\begin{equation}
\label{proof: ineq chibar}
[\overline{\chi}_{\eps_k}]^2_{H^2_{\sigma,\tau}(R_{\eps_k})} \leq [\chi_{\eps_k}]^2_{H^2_{\sigma,\tau}(\Omega_{\eps_k})} + o(1)
\end{equation}
as $k\to \infty$. Then we can extend by zero $\overline{\chi}_{\eps_k}$ to get a function (that we still call $\overline{\chi}_{\eps_k}$) in $H^2(\Omega_{\eps_k})$. Hence,
\begin{equation} \label{proof: eq}
\begin{split}
(\overline{\chi}_{\eps_k}, &\phi^{\eps_k}_i)_{L^2(\Omega_{\eps_k})}= (\overline{\chi}_{\eps_k}, \phi^{\eps_k}_i)_{L^2(R_{\eps_k})}\\
&= (\overline{\chi}_{\eps_k} - \chi_{\eps_k} , \phi^{\eps_k}_i)_{L^2(R_{\eps_k})} + (\chi_{\eps_k}, \phi^{\eps_k}_i)_{L^2(\Omega_{\eps_k})} - (\chi_{\eps_k}, \phi^{\eps_k}_i)_{L^2(\Omega)}
\end{split}
\end{equation}
for all $i=1,\dots,n$. By \eqref{proof: E7}, \eqref{proof: E9}, and the definition of $\overline{\chi}_{\eps_k}$ the right hand side of \eqref{proof: eq} tends to $0$ as $k\to \infty$, for all $i=1,\dots,n$. Thus, $\overline{\chi}_{\eps_k}$ is  asymptotically orthogonal
 to $\phi^{\eps_k}_1, \dots, \phi^{\eps_k}_n$. In particular, by the variational characterization of the eigenvalues $\lambda_i^{\eps_k}$ we get that
\begin{equation} \label{proof: E10}
[\overline{\chi}_{\eps_k}]^2_{H^2_{\sigma,\tau}(R_{\eps_k})} \geq \lambda^{\eps_k}_{n+1} \norma{\overline{\chi}_{\eps_k}}_{L^2(R_{\eps_k})} - o(1) \geq \lambda_{n+1} \norma{\overline{\chi}_{\eps_k}}_{L^2(R_{\eps_k})} - o(1)
\end{equation}
On the other hand, by \eqref{proof: ineq chibar} we deduce that
\[
\begin{split}
[\overline{\chi}_{\eps_k}]^2_{H^2_{\sigma,\tau}(R_{\eps_k})}&\leq [\chi_{\eps_k}]^2_{H^2_{\sigma,\tau}(\Omega_{\eps_k})} + o(1)\\
&=\lambda_n \norma{\chi_{\eps_k}}^2_{L^2(\Omega_{\eps_k})} + o(1) = \lambda_n \norma{\overline{\chi}_{\eps_k}}^2_{L^2(R_{\eps_k})} + o(1).
\end{split}
\]
This is a contradiction to \eqref{proof: E10} because $\lambda_n < \lambda_{n+1}$. Step 1 is complete.
\smallskip

\noindent \emph{Step 2}: we prove that $\widehat{\lambda}_{n+1} = \lambda_{n+1}$.\\
Assume by contradiction that $\widehat{\lambda}_{n+1} < \lambda_{n+1}$. Let us note that as a consequence of Step 1 we can use Proposition \ref{prop: convergence eigenprojections} for the $n$-th eigenvalues in order to obtain
\begin{equation} \label{proof: E11}
\norma{\varphi_i^{\eps_k} - P_n \varphi_i^{\eps_k}}_{H^2(\Omega) \oplus H^2(R_{\eps_k})} \to 0,
\end{equation}
for all $i =1,\dots,n$. Then we can use the same argument we used in Step 1 for $\chi_{\eps_k}$ to show that
\begin{equation} \label{proof: E12}
\norma{\varphi_{n+1}^{\eps_k}}_{L^2(\Omega)} \longrightarrow 0,
\end{equation}
as $k\to \infty$. To see this, just note that $\varphi_{n+1}^{\eps_k}$ is orthogonal to $\varphi_1^{\eps_k},\dots, \varphi_n^{\eps_k}$, and by \eqref{proof: E11} we deduce that $(\varphi_{n+1}^{\eps_k}, \phi_i^{\eps_k})_{L^2(\Omega_{\eps_k})} \to 0$, as $k \to \infty$, for all $i=1,\dots,n$. Moreover,
\begin{equation}
\label{proof: limit varphi_n+1}
(\varphi_{n+1}^{\eps_k}, \phi_{n+1}^{\eps_k})_{L^2(\Omega_{\eps_k})} \longrightarrow 0,
\end{equation}
as $k \to \infty$.  Indeed, looking  at the weak formulation of problem \eqref{PDE: main problem_eigenvalues} and denoting  by $B_U$ denotes the quadratic problem associated with the operator
$\Delta^2-\tau\Delta +I$
on an open set $U$,  we deduce both
\[
B_{\Omega_{\eps_k}}(\varphi_{n+1}^{\eps_k}, \phi_{n+1}^{\eps_k}) = \lambda_{n+1}(\Omega_{\eps_k})(\varphi_{n+1}^{\eps_k}, \phi_{n+1}^{\eps_k})_{L^2(\Omega_{\eps_k})} +o(1)
\]
and
\[
B_{\Omega_{\eps_k}}( \phi_{n+1}^{\eps_k},\varphi_{n+1}^{\eps_k}) = \lambda^{\eps_k}_{n+1}(\phi_{n+1}^{\eps_k}, \varphi_{n+1}^{\eps_k})_{L^2(\Omega_{\eps_k})} + o(1),
\]
and subtracting the above equalities and passing to the limit as $k \to \infty$ we obtain
$(\widehat{\lambda}_{n+1} - \lambda_{n+1}) \lim_{k \to \infty}(\phi_{n+1}^{\eps_k}, \varphi_{n+1}^{\eps_k})_{L^2(\Omega_{\eps_k})} = 0$, which implies \eqref{proof: limit varphi_n+1}. Then
\begin{equation}
\label{proof: varphi_n+1 almost orth}
(\varphi_{n+1}^{\eps_k}, \phi_i^{\eps_k})_{L^2(\Omega_{\eps_k})} \longrightarrow 0,
\end{equation}
as $k \to \infty$, for all $i=1,\dots,n +1$. Passing to the limit in $k$ we have $(\widehat{\varphi}_{n+1}, \phi_i)_{L^2(\Omega)} = 0$ for all $i=1,\dots,n+1$. However, as in Step 1 we would have $[\widehat{\varphi}_{n+1}]^2_{H^2_{\sigma,\tau}(\Omega)} = \widehat{\lambda}_{n+1} \norma{\widehat{\varphi}_{n+1}}_{L^2(\Omega)}$, which contradicts the assumption $\widehat{\lambda}_{n+1} < \lambda_{n+1}$ unless $\widehat{\varphi}_{n+1} \equiv 0$, which gives \eqref{proof: E12}.\\
Now we use the H-Condition and \eqref{proof: E12} in order to find a function $\overline{\varphi}^{\eps_k}_{n+1} \in H^2_{L_{\eps_k}}(R_{\eps_k})$ such that
$ \norma{\overline{\varphi}^{\eps_k}_{n+1}}_{L^2(R_{\eps_k})} = 1 + o(1)$ and
\[
[\overline{\varphi}^{\eps_k}_{n+1}]^2_{H^2_{\sigma,\tau}(R_{\eps_k})} \leq [\varphi^{\eps_k}_{n+1}]^2_{H^2_{\sigma,\tau}(\Omega_{\eps_k})} + o(1) = \lambda_{n+1}(\Omega_{\eps_k}) + o(1) \leq \widehat{\lambda}_{n+1} + o(1),
\]
as $k\to\infty$. On the other hand, by the variational characterization of $\lambda_{n+1}^{\eps_k}$ and by \eqref{proof: E12}, \eqref{proof: varphi_n+1 almost orth} we deduce that
$
[\overline{\varphi}^{\eps_k}_{n+1}]^2_{H^2_{\sigma,\tau}(R_{\eps_k})} \geq \lambda_{n+1}^{\eps_k}\norma{\overline{\varphi}^{\eps_k}_{n+1}}_{L^2(R_{\eps_k})} - o(1) \geq \lambda_{n+1} -o(1)
$
as $k \to \infty$, hence $\lambda_{n+1} \leq \widehat{\lambda}_{n+1}$, a contradiction. Thus it must be $\lambda_{n+1} = \widehat{\lambda}_{n+1}$.
\end{proof}

We will say that $x_\eps \in (0,\infty )$ \textit{divides the spectrum} of a family of  nonnegative self-adjoint operators $A_\eps$, $\epsilon >0$, with compact resolvents in $L^2(\Omega_\eps)$ if there exist $\delta, M, N, \eps_0 > 0$ such that
\begin{align}
[x_\eps - \delta, x_\eps + \delta] \cap \{ \lambda_n^\eps \}_{n=1}^\infty = \emptyset,& \quad\forall \eps < \eps_0\\
x_\eps \leq M,& \quad\forall \eps < \eps_0\\
N(x_{\eps}) := \#\{ \lambda_i^{\eps} : \lambda_i^{\eps} \leq x_\eps\}\leq N < \infty.
\end{align}
If $x_\eps$ divides the spectrum we  define the projector $P_{x_\eps}$ from $L^2(\Omega_\eps)$ onto the linear span $[\phi_1^{\eps}, \dots, \phi_{N(x_\eps)}^{\eps}]$ of the first $N(x_\eps)$ eigenfunctions  by
\[
P_{x_\eps} g = \sum_{i=1}^{N(x_\eps)} (g,\phi_i^\eps)_{L^2(\Omega_\eps)} \phi_i^\eps\, ,
\]
for all $g\in L^2(\Omega_\eps)$. Then, recalling Theorem \ref{thm: upper bound} and Theorem \ref{thm: lower bound} we deduce the following.

\begin{theorem}[(Decomposition of the eigenvalues)] \label{thm: eigenvalues decomposition}
Let  $\Omega_\eps$, $\eps>0$, be a family of  dumbbell domains satisfying the H-Condition. Then the following statements hold:
\begin{enumerate}[label =(\roman*)]
\item   $\lim_{\eps \to 0}\, \abs{\lambda_n(\Omega_\eps) - \lambda_n^\eps} = 0$,  for all  $n\in \N $.

\item   For any $x_\eps$ dividing the spectrum,
 $\lim_{\eps \to 0}\, \norma{\varphi^\eps_{r_\eps} - P_{x_\eps} \varphi^\eps_{r_\eps}}_{H^2(\Omega) \oplus H^2(R_\eps)} = 0$, for all $r_\eps = 1,\dots, N(x_\eps)$.\end{enumerate}
\end{theorem}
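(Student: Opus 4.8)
The plan is to obtain part (i) as an immediate consequence of the two-sided bounds already established, and then to derive part (ii) from Proposition~\ref{prop: convergence eigenprojections} after a routine subsequence reduction that absorbs the $\eps$-dependence of $N(x_\eps)$.

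For part (i) I would simply combine the upper bound of Theorem~\ref{thm: upper bound}, namely $\lambda_n(\Omega_\eps) \leq \lambda_n^\eps + o(1)$, with the lower bound of Theorem~\ref{thm: lower bound}, namely $\lambda_n(\Omega_\eps) \geq \lambda_n^\eps - o(1)$, which is at our disposal precisely because $\Omega_\eps$ satisfies the H-Condition. Together these give $\abs{\lambda_n(\Omega_\eps) - \lambda_n^\eps} = o(1)$ as $\eps \to 0$ for every fixed $n$, which is exactly statement (i).

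For part (ii), fix $x_\eps$ dividing the spectrum, with associated constants $\delta, M, N, \eps_0 > 0$. Since $N(x_\eps) \leq N$ for all $\eps < \eps_0$, the integer $N(x_\eps)$ takes only finitely many values, so starting from any subsequence I can extract a further subsequence along which $N(x_\eps) \equiv n$ is constant; on such a subsequence $P_{x_\eps} = P_n$ and $\{\varphi^\eps_{r_\eps} : r_\eps \leq N(x_\eps)\}$ is exactly $\{\varphi^\eps_1, \dots, \varphi^\eps_n\}$. I would then check the two hypotheses of Proposition~\ref{prop: convergence eigenprojections} for this fixed $n$: hypothesis (i) is precisely part (i) of the present theorem applied for each index $i \leq n$, while hypothesis (ii) follows from the gap encoded in the definition of dividing the spectrum. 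Indeed, $N(x_\eps) = n$ together with $[x_\eps - \delta, x_\eps + \delta] \cap \{\lambda_i^\eps\} = \emptyset$ forces $\lambda_n^\eps \leq x_\eps - \delta$ and $\lambda_{n+1}^\eps \geq x_\eps + \delta$, whence by part (i) one has $\lambda_{n+1}(\Omega_\eps) \geq \lambda_{n+1}^\eps - o(1) \geq x_\eps + \delta - o(1)$, and therefore $\lambda_n^\eps \leq \lambda_{n+1}(\Omega_\eps) - \delta$ for $\eps$ small enough. Applying Proposition~\ref{prop: convergence eigenprojections} along this subsequence (viewed as a family tending to $0$) then yields $\norma{\varphi_i^\eps - P_n \varphi_i^\eps}_{H^2(\Omega) \oplus H^2(R_\eps)} \to 0$ for all $i = 1, \dots, n$.

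Finally, to pass from subsequential convergence back to the full family I would invoke the standard principle that a sequence converges provided every subsequence admits a further subsequence converging to the same limit; here the common limit is $0$ and is independent of the subsequence chosen, so $\norma{\varphi^\eps_{r_\eps} - P_{x_\eps}\varphi^\eps_{r_\eps}}_{H^2(\Omega) \oplus H^2(R_\eps)} \to 0$ as $\eps \to 0$. I expect the only genuine subtlety to be the bookkeeping around the varying value of $N(x_\eps)$ and the verification of the gap hypothesis (ii) from the definition of dividing the spectrum; both are dispatched by the subsequence reduction above, so that the analytic core of the argument is entirely carried by Proposition~\ref{prop: convergence eigenprojections}, whose proof already performed the delicate almost-orthogonality and induction estimates.
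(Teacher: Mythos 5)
Your proposal is correct and follows the same route the paper intends: part (i) is exactly the combination of Theorem~\ref{thm: upper bound} and Theorem~\ref{thm: lower bound} (the latter available because of the H-Condition), and part (ii) is obtained by feeding part (i) and the spectral gap from the definition of ``dividing the spectrum'' into Proposition~\ref{prop: convergence eigenprojections}. Your subsequence reduction to a constant value of $N(x_\eps)$ and the verification that $\lambda_n^\eps \leq x_\eps-\delta$ while $\lambda_{n+1}(\Omega_\eps)\geq x_\eps+\delta-o(1)$ correctly supply the two hypotheses of that proposition, so the details you add are exactly the ones the paper leaves implicit.
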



\section{Proof of the H-Condition for regular dumbbells}
\label{sec: proof H condition regular dumbbells}
The goal of this section is to prove that the H-Condition holds for regular dumbbell domains. More precisely, we will consider channels $R_\eps$ such that the profile function $g$ has the following monotonicity property:\vspace{8pt}\\
 (MP): \textit{ there exists $\delta \in ]0, 1/2[$ such that $g$ is decreasing on $[0,\delta)$ and increasing on $(1-\delta, 1]$. }  \vspace{8pt}\\
\noindent If (MP) is satisfied then the set $A_{\eps} = \{ (x,y) \in \R^2 : x \in (0,\delta) \cup (1-\delta, 1), 0<y< \eps g(x)      \}$ is contained in the union of the two rectangles $[0, \delta] \times [0, \eps g(0)]$ and $[1-\delta, 1] \times [0, \eps g(1)]$. This fact will be used in the proof of the following theorem in order to control the $H^2$ norm of the candidate function $\overline{u}_\eps$ appearing in the H-Condition.

\begin{theorem} \label{thm: (MP) implies (H)}
The validity of condition (MP) implies the validity of the H-Condition.
\end{theorem}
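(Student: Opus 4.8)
The plan is to localise the construction near the two junctions and to exploit (MP) in order to reduce all the relevant estimates to the two rectangles $[0,\delta]\times[0,\eps g(0)]$ and $[1-\delta,1]\times[0,\eps g(1)]$ that contain $A_\eps$. I would build $\bar u_\eps$ by modifying $u_\eps$ only inside these rectangles, so that it vanishes together with its gradient on $L_\eps$. A first attempt is to fix a smooth cutoff $\eta_\eps$ depending only on $x$, equal to $0$ in a neighbourhood of $L_\eps$ (so that $\eta_\eps u_\eps$ and its gradient vanish on $L_\eps$, whence $\eta_\eps u_\eps\in H^2_{L_\eps}(R_\eps)$) and equal to $1$ on the bulk of the channel, and to set $\bar u_\eps=\eta_\eps u_\eps$. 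Property (i) of the H--Condition is the soft part: since $\norma{u_\eps}_{H^2(\Omega_\eps)}\le R$ and, in dimension two, $H^2(\Omega)\hookrightarrow C^{0,\alpha}(\overline\Omega)$, the function $u_\eps$ is uniformly bounded near the junction points, while the region where $\eta_\eps\ne 1$ has measure $O(\eps)$; hence $\norma{u_\eps-\bar u_\eps}_{L^2(R_\eps)}\to 0$.

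The heart of the matter is the energy inequality (ii). Expanding $[\eta_\eps u_\eps]^2_{H^2_{\sigma,\tau}(R_\eps)}$ one isolates the leading term $\int_{R_\eps}(1-\sigma)\eta_\eps^2|D^2u_\eps|^2+\dots$, which is bounded by $[u_\eps]^2_{H^2_{\sigma,\tau}(R_\eps)}$ because $0\le\eta_\eps\le1$; everything else is supported in the transition region inside the rectangles and involves $\eta_\eps'$, $\eta_\eps''$ paired with $\nabla u_\eps$ and $u_\eps$. Here I would use (MP) together with a one--dimensional fundamental--theorem--of--calculus estimate in $x$ starting from the junction segment: writing $u_\eps(x,y)=u_\eps(0,y)+\int_0^x\partial_x u_\eps$ and similarly for $\nabla u_\eps$, every contribution carrying a trace of $u_\eps$ or of $\nabla u_\eps$ on $L_\eps$ is controlled by $\int_{L_\eps}(|u_\eps|^2+|\nabla u_\eps|^2)$, which tends to $0$ because $|L_\eps|\to0$ while the traces stay bounded (in $C^{0,\alpha}$ and in $L^q$ for every $q<\infty$, by the trace theorem for $H^2(\Omega)$). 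These trace terms therefore produce only $o(1)$.

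The main obstacle -- and the reason both (MP) and the slack on the right--hand side of (ii) are needed -- is the second--order contribution $\int_{A_\eps}|D^2u_\eps|^2$ arising from the transition region: there is no uniform equi--integrability of $|D^2u_\eps|^2$ near the junction, so a blunt cutoff leaves an extra term that is only $O(1)$, not $o(1)$. I would resolve this by subtracting a reflection--type corrector instead of merely cutting off: using a higher--order (Seeley--type) reflection extension $E$ across $\{x=0\}$, bounded from $H^2(\Omega_L)$ into $H^2$ of a channel neighbourhood and matching both the value and the normal derivative of $u_\eps$ on $L_\eps$, I set $\bar u_\eps=u_\eps-\zeta_\eps E(u_\eps|_{\Omega_L})$ near the left junction and symmetrically on the right. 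This annihilates the dominant first--order part of $u_\eps$ on $L_\eps$, keeps $u_\eps-\bar u_\eps$ genuinely small in $L^2$ and $H^1$, and identifies the remaining top--order energy over the thin channel strip with the $H^2$ energy of $u_\eps$ over the corresponding thin strip of $\Omega_L$, i.e. with a piece of $[u_\eps]^2_{H^2_{\sigma,\tau}(\Omega)}$. Since this quantity enters (ii) as nonnegative slack (because $[u_\eps]^2_{H^2_{\sigma,\tau}(\Omega_\eps)}=[u_\eps]^2_{H^2_{\sigma,\tau}(\Omega)}+[u_\eps]^2_{H^2_{\sigma,\tau}(R_\eps)}$), the delicate point reduces, via the bilinear expansion $[u_\eps-\zeta_\eps Eu_\eps]^2=[u_\eps]^2-2B_{A_\eps}(u_\eps,\zeta_\eps Eu_\eps)+[\zeta_\eps Eu_\eps]^2$, to showing that the cross term almost cancels the two quadratic terms up to $o(1)$. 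Establishing this with the sharp constant (no loss of a factor larger than one) is the crux I expect to be hardest, and it is precisely where the containment of $A_\eps$ in the rectangles, furnished by (MP), makes the explicit estimates on $Eu_\eps$ tractable.
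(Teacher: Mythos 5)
Your proposal correctly handles the soft part (property (i)) and correctly senses that a plain cutoff fails, but the step you yourself flag as ``the crux I expect to be hardest'' is a genuine gap, and the construction you propose does not close it. With $\bar u_\eps=u_\eps-\zeta_\eps E(u_\eps|_{\Omega_L})$, the top-order energy over the junction region expands as $\|D^2u_\eps\|^2-2\langle D^2u_\eps,\zeta_\eps D^2Eu_\eps\rangle+\|\zeta_\eps D^2Eu_\eps\|^2$, and there is no cancellation mechanism: $D^2u_\eps$ in the channel and $D^2Eu_\eps$ (a reflection of $D^2u_\eps$ from the $\Omega$-side) are unrelated at the level of second derivatives, so the cross term has no sign, while any Seeley-type reflection matching two traces has $H^2$-operator norm strictly larger than $1$. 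Hence the best you get is $[\bar u_\eps]^2_{H^2_{\sigma,\tau}(R_\eps)}\le [u_\eps]^2_{H^2_{\sigma,\tau}(R_\eps)}+C\,[u_\eps]^2_{H^2_{\sigma,\tau}(K)}+o(1)$ with $C>1$, where $K$ is the thin strip of $\Omega$ adjacent to the junction; if $u_\eps$ concentrates its $\Omega$-energy near the junction, this exceeds the allowed right-hand side $[u_\eps]^2_{H^2_{\sigma,\tau}(\Omega_\eps)}+o(1)$. (A side remark: your diagnosis of why the blunt cutoff fails is also slightly off --- the term $\int\eta_\eps^2|D^2u_\eps|^2$ is harmless since $0\le\eta_\eps\le1$; the $O(1)$ obstruction comes from the commutators $\eta_\eps''u_\eps$ and $\eta_\eps'\nabla u_\eps$, because the decay $\|u_\eps\|_{L^2(J_\eps^\theta)}=O(\eps^{2\theta})$, $\|\nabla u_\eps\|_{L^2(J_\eps^\theta)}=O(\eps^\theta)$ exactly matches, and does not beat, the blow-up of the cutoff derivatives.)

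The paper's device, which is the idea you are missing, is to obtain the constant $1+o(1)$ by \emph{transplantation} rather than by subtraction of a corrector: one sets $\bar u_\eps(x,y)=u_\eps(f(x),y)\,\chi_\eps^\gamma(f(x))$, where $f$ is a $C^{1,1}$ change of variables compressing $(-\eps^\gamma,\eps^\beta)$ onto $(0,\eps^\beta)$ with $f'=1+o(1)$ (this needs $\gamma>\beta$), and the cutoff transition is placed \emph{inside} $\Omega$, on the strip $K_\eps^\gamma$, where the uniform bounds $\|u_\eps\|_{L^\infty(\Omega)}\le C\|u_\eps\|_{H^2(\Omega)}$ and $\|\nabla u_\eps\|_{L^p(\Omega)}\le C\|u_\eps\|_{H^2(\Omega)}$ make the commutator terms $o(1)$ for $\gamma<1/3$. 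After the change of variables the leading energy of $\bar u_\eps$ equals $(1+o(1))\|D^2u_\eps\|^2_{L^2(K_\eps^\gamma\cup J_\eps^\beta)}$ --- sharp constant --- and the borrowed piece over $K_\eps^\gamma\subset\Omega$ is absorbed by the monotonicity of the quadratic form under set inclusion (which for $\sigma\in(-1,0)$ requires the pointwise inequality $(1-\sigma)|D^2u|^2+\sigma|\Delta u|^2\ge(1+\sigma)(u_{xx}^2+u_{yy}^2)\ge0$, a point your argument also leaves unaddressed). Your reflection idea is not wasted, though: the paper uses exactly the higher-order reflection $u_\eps^s(x,y)=-3u_\eps(-x,y)+4u_\eps(-x/2,y)$, combined with a one-dimensional Poincar\'e inequality for $u_\eps-u_\eps^s$ (which vanishes to first order on $L_\eps$), to prove the decay rates $O(\eps^{2\theta})$ and $O(\eps^\theta)$ quoted above --- and it is precisely here, in guaranteeing that the reflected strip covers the channel and that the transplanted region stays inside $K_\eps^\gamma\cup J_\eps^\beta$, that condition (MP) enters.
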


Before writing the proof of this theorem we need to introduce some notation. First, for the sake of clarity we will consider a ``one-sided'' dumbbell $\Omega_\eps = \Omega \cup \overline{R_\eps}$ where $\Omega$ is a smooth bounded domain in $\R^2$ such that the segment $\{0\} \times [-1, 1]$ is contained in the boundary of $\Omega$, $\Omega \cap \{x \geq 0\} = \emptyset$ and $R_\eps$ is defined as in \eqref{def: R_eps}. We will assume that $R_\eps$ satisfies the (MP) condition on $0<x<\delta$ only. Let $L_\eps$ be the segment $\{0\} \times (0, \eps g(0))$.

For any  $\gamma \in (0,1)$, we define a function $\chi_\eps^\gamma \in C^{1,1}[-\eps^\gamma, 1]$,  such that   $\chi_\eps^\gamma(-\eps^\gamma) = (\chi_\eps^\gamma)'(-\eps^\gamma) = 0$,  $\chi_\eps^\gamma(x) \equiv 1$ for all $0 \leq x \leq 1$ and such that the following bounds on the derivatives
\[
\norma{(\chi_\eps^\gamma)'}_{L^\infty(-\eps^\gamma, 0)} \leq \frac{c_1}{\eps^\gamma}, \quad \norma{(\chi_\eps^\gamma)''}_{L^\infty(-\eps^\gamma, 0)} \leq \frac{c_2}{\eps^{2\gamma}},
\]
are satisfied for some positive real numbers $c_1, c_2$. A possible choice for $\chi_\eps^\gamma$ is
\[
\chi_\eps^\gamma(x) =
\begin{cases} -2 \bigg( \displaystyle \frac{x + \eps^\gamma}{\eps^\gamma} \bigg)^3 + 3 \bigg( \frac{x + \eps^\gamma}{\eps^\gamma} \bigg)^2, & x\in (-\eps^{\gamma},0), \\  \\
\qquad 1, & x\in (0,1),
\end{cases}
\]
which gives the (non-optimal) bounds $c_1 = 3/2$, $c_2 = 6$. For any  $\gamma, \beta>0$ we define the function $f_{\gamma, \beta} \in C^{1,1}(0,1)$ by setting
\begin{equation}
f=f_{\gamma,\beta}(x) =
\begin{cases} -\eps^\gamma \Big(\frac{x}{\eps^\beta}\Big)^2 + (\eps^\beta+2\eps^\gamma) \Big( \frac{x}{\eps^\beta} \Big) - \eps^\gamma, & x\in (0,\eps^\beta), \\
\qquad x, & x\in (\eps^\beta, 1).
\end{cases}
\end{equation}
Note that $f$ is a $C^{1,1}$-diffeomorphism from  $(0, \eps^\beta)$ onto $(-\eps^\gamma, \eps^\beta)$. Then,
\[
f'(x) =
\begin{cases} 1+2 \eps^{\gamma-\beta} \, (1-\frac{x}{\eps^\beta}), & x\in (0,\eps^\beta), \\
\qquad 1, & x\in (\eps^\beta, 1),
\end{cases}
\]
and
\[
f''(x) =
\begin{cases} - 2 \eps^{\gamma-2\beta}, & x\in (0,\eps^\beta), \\
\qquad 0, & x\in (\eps^\beta, 1),
\end{cases}
\]
which implies  that $|f'(x)-1|\leq 2 \eps^{\gamma-\beta}$, for all $x\in (0,1)$, and $|f''(x)|\leq 2  \eps^{\gamma-2\beta}$, for all $x\in (0,1)$. Thus,
 if  $\gamma>\beta$ then
\begin{equation}
\label{eq: asymptotics f'}
f'(x) = 1 + o(1) \quad \hbox{ as } \eps\to 0.
\end{equation}

For any  $\theta \in (0,1)$, we define the following sets:
\begin{align*}
&K_\eps^\theta = \{ (x,y) \in \Omega: - \eps^\theta < x < 0,\, 0 < y < \eps g(0) \}\, ,   \\
&\Gamma_\eps^\theta = \{ (-\eps^\theta, y) : 0<y<\eps g(0) \} \, ,         \\
&J_\eps^\theta = \{ (x,y) \in R_\eps : 0< x < \eps^\theta    \} \, ,   \\
&Q_\eps^\theta = \{(x,y) \in \R^2: 0<x<\eps^\theta, 0<y<\eps g(0)  \}\,  .
\end{align*}

Finally,  if  $\gamma /3 < \beta < \gamma /2$, for every  $u_\eps \in H^2(\Omega_\eps)$ we define  the function $\overline{u}_\eps \in H^2(R_\eps)$
by setting
\begin{equation} \label{def: u bar}
\overline{u}_\eps (x,y) = u_\eps(f(x),y) \,\chi^\gamma_\eps(f(x)),
\end{equation}
for all $(x,y) \in R_\eps$.   Function $\overline{u}_\eps$ will be used to prove the validity of
the   H-Condition. Before doing so, we need to prove the following proposition.

\bigskip

\begin{proposition}\label{prop: sym arg}
Let $\Omega_\eps = \Omega \cup R_\eps$ with $R_\eps$ satisfying the (MP) condition. Let $u_\eps \in H^2(\Omega_\eps)$ be such that $\norma{u_\eps}_ {H^2(\Omega_\eps)} \leq R$ for all $\eps > 0$. Then, with the notation above and for $0<\theta<\frac{1}{3}$, we have
\begin{equation}
 \norma{u_\eps}_{L^2(J_\eps^\theta)} = O(\eps^{2\theta}), \quad \norma{\nabla u_\eps}_{L^2(J_\eps^\theta)}=O(\eps^\theta), \hbox{  as  } \eps\to 0
\end{equation}
\end{proposition}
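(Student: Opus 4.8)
The plan is to reduce everything to the one–dimensional fundamental theorem of calculus along horizontal segments. Condition (MP) forces $g$ to be decreasing on $(0,\delta)$, so that for $\eps$ small every horizontal segment $(-\eps^\theta,x)\times\{y\}$ with $(x,y)\in J_\eps^\theta$ lies entirely in $\Omega_\eps$ (indeed $y<\eps g(x)\le\eps g(t)$ for $-\eps^\theta<t<x$), crossing from $K_\eps^\theta\subset\Omega$ into the channel; in particular $J_\eps^\theta\subset Q_\eps^\theta$ and $\abs{K_\eps^\theta}\sim\eps^{1+\theta}$. The only facts I need on the fixed domain $\Omega$ are the two–dimensional Sobolev embeddings $H^2(\Omega)\hookrightarrow C^0(\overline{\Omega})$ and $H^1(\Omega)\hookrightarrow L^4(\Omega)$, which, together with $\norma{u_\eps}_{H^2(\Omega_\eps)}\le R$, give the uniform bounds $\norma{u_\eps}_{L^\infty(\Omega)}\le CR$ and $\norma{\nabla u_\eps}_{L^4(\Omega)}\le CR$. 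From these I read off the two localized estimates $\norma{u_\eps}_{L^2(K_\eps^\theta)}^2\le CR^2\abs{K_\eps^\theta}=O(\eps^{1+\theta})$ and, by Hölder, $\norma{\nabla u_\eps}_{L^2(K_\eps^\theta)}^2\le\norma{\nabla u_\eps}_{L^4(\Omega)}^2\abs{K_\eps^\theta}^{1/2}=O(\eps^{(1+\theta)/2})$. (Averaging the base point over $K_\eps^\theta$, rather than fixing the cross section $\Gamma_\eps^\theta$ and taking a trace, is what lets me avoid a delicate trace estimate for $\nabla u_\eps$ on the shrinking segment.)

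\emph{Step 1 (gradient estimate).} For a.e.\ $y$ and $(x,y)\in J_\eps^\theta$ I write, averaging the base point over the cross region,
\[
\partial_x u_\eps(x,y)=\frac{1}{\eps^\theta}\int_{-\eps^\theta}^{0}\partial_x u_\eps(s,y)\,ds+\frac{1}{\eps^\theta}\int_{-\eps^\theta}^{0}\int_{s}^{x}\partial_{xx}u_\eps(t,y)\,dt\,ds,
\]
and similarly for $\partial_y u_\eps$ with $\partial_{xy}u_\eps$ in place of $\partial_{xx}u_\eps$. Squaring, using Jensen and Cauchy--Schwarz and integrating over $J_\eps^\theta$ (whose $x$–section has length $\le\eps^\theta$), the first term is controlled by $\norma{\nabla u_\eps}_{L^2(K_\eps^\theta)}^2=O(\eps^{(1+\theta)/2})$, while the double–integral term is bounded by $2\eps^{2\theta}\norma{D^2u_\eps}_{L^2(\Omega_\eps)}^2=O(\eps^{2\theta})$. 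Since $\theta<1/3$ gives $(1+\theta)/2\ge2\theta$, both contributions are $O(\eps^{2\theta})$, hence $\norma{\nabla u_\eps}_{L^2(J_\eps^\theta)}=O(\eps^{\theta})$.

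\emph{Step 2 ($L^2$ estimate).} I repeat the representation with $u_\eps$ itself,
\[
u_\eps(x,y)=\frac{1}{\eps^\theta}\int_{-\eps^\theta}^{0}u_\eps(s,y)\,ds+\frac{1}{\eps^\theta}\int_{-\eps^\theta}^{0}\int_{s}^{x}\partial_x u_\eps(t,y)\,dt\,ds.
\]
The first term integrates to $\norma{u_\eps}_{L^2(K_\eps^\theta)}^2=O(\eps^{1+\theta})=O(\eps^{4\theta})$ (again because $\theta\le1/3$). For the second term I split the inner integral at $t=0$: the part over $(-\eps^\theta,0)$ lives in $K_\eps^\theta$ and contributes $O(\eps^{2\theta})\,\norma{\nabla u_\eps}_{L^2(K_\eps^\theta)}^2=O(\eps^{2\theta+(1+\theta)/2})=O(\eps^{4\theta})$, whereas the part over $(0,x)$ lives in $J_\eps^\theta$ and contributes $O(\eps^{2\theta})\,\norma{\nabla u_\eps}_{L^2(J_\eps^\theta)}^2=O(\eps^{4\theta})$ by the gradient estimate just obtained. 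Collecting the terms yields $\norma{u_\eps}_{L^2(J_\eps^\theta)}^2=O(\eps^{4\theta})$, that is $\norma{u_\eps}_{L^2(J_\eps^\theta)}=O(\eps^{2\theta})$.

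The main obstacle is that a single application of the fundamental theorem of calculus to $u_\eps$ only yields $\norma{u_\eps}_{L^2(J_\eps^\theta)}=O(\eps^{\theta})$, which is half a power short; the sharp rate is recovered only by first proving the gradient decay and then feeding it back into the $L^2$ estimate. Establishing that gradient decay is where the $H^2$ regularity is essential, since it is exactly what turns the awkward $L^2$–trace of $\nabla u_\eps$ on the shrinking cross section into the controllable quantity $\norma{\nabla u_\eps}_{L^2(K_\eps^\theta)}$, estimated through the $L^4(\Omega)$ bound. One must also keep track that the constraint $\theta<1/3$ is precisely what makes the error exponents $1+\theta$, $\,2\theta+(1+\theta)/2$ and $4\theta$ all dominate $4\theta$ (respectively $2\theta$), and that (MP) guarantees that every horizontal integration path stays inside $\Omega_\eps$.
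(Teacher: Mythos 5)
Your proof is correct, and it follows a genuinely different route from the paper's. The paper constructs the higher-order reflection $u_\eps^s(x,y)=-3u_\eps(-x,y)+4u_\eps(-x/2,y)$, so that $\psi_\eps=u_\eps-u_\eps^s$ vanishes together with its gradient on $L_\eps$, and then applies a one-dimensional mixed Dirichlet--Neumann Poincar\'e inequality in the $x$-direction (once for the gradient, twice for $u_\eps$ itself), while $u_\eps^s$ is controlled on $J_\eps^\theta$ by transporting the $L^\infty(\Omega)$ and $L^p(\Omega)$ Sobolev bounds from $K_\eps^\theta$. You instead represent $u_\eps$ and $\nabla u_\eps$ on $J_\eps^\theta$ by the fundamental theorem of calculus along horizontal segments with the base point averaged over $(-\eps^\theta,0)$, and you recover the extra half power for $\norma{u_\eps}_{L^2(J_\eps^\theta)}$ by bootstrapping: first prove $\norma{\nabla u_\eps}_{L^2(J_\eps^\theta)}=O(\eps^\theta)$, then feed it back into the representation of $u_\eps$. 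The two arguments are morally parallel --- both hinge on (MP) making the channel horizontally accessible from $\Omega$ (you use it so the integration paths stay in $\Omega_\eps$, the paper so that the reflected copy of $J_\eps^\theta$ lands in $K_\eps^\theta$), and both use the same Sobolev embeddings on the fixed domain $\Omega$ to make $\norma{u_\eps}_{L^2(K_\eps^\theta)}$ and $\norma{\nabla u_\eps}_{L^2(K_\eps^\theta)}$ small, with $\theta<1/3$ entering at exactly the same exponent comparison $(1+\theta)/2\geq 2\theta$. What your version buys is the elimination of the reflection operator and of the explicit eigenvalue $\lambda_1(\rho)=(\pi/2)^2\rho^{-2}$, at the price of slightly more bookkeeping in splitting the inner integral at $t=0$; the paper's version isolates the boundary-condition mechanism (a function vanishing to first order on $L_\eps$) more explicitly, which is the same mechanism later exploited in the H-Condition itself. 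Both share the implicit geometric fact that the rectangle $(-\eps^\theta,0)\times(0,\eps g(0))$ lies in $\Omega$ for small $\eps$, so this is not a gap specific to your argument.
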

\begin{proof}
We define the function $u_\eps^s \in H^2(J_\eps^\theta)$ by setting
\[
u_\eps^s (x,y) = -3 u_\eps(-x,y) + 4 u_\eps \Bigl(-\frac{x}{2}, y \Bigr)
\]
for all $(x,y) \in J_\eps^\theta$. The function $u_\eps^s$ can be viewed as a higher order reflection of $u_\eps$ with respect to the $y$-axis. Let us note that we can estimate the $L^2$ norm of $u^s_\eps$, of its gradient and of its derivatives of order 2, in the following way:
\begin{align}
&\norma{u_\eps^s}_{L^2(J_\eps^\theta)} \leq C \norma{u_\eps}_{L^2(K_\eps^\theta)}, \label{proof: ineq 1}  \\
&\norma{\nabla u_\eps^s}_{L^2(J_\eps^\theta)} \leq C \norma{\nabla u_\eps}_{L^2(K_\eps^\theta)}, \label{proof: ineq 2} \\
&\norma{D^\alpha u_\eps^s}_{L^2(J_\eps^\theta)} \leq C\norma{D^\alpha u_\eps}_{L^2(K_\eps^\theta)}, \label{proof: ineq 3}
\end{align}
for any multiindex $\alpha$ of length $2$ and for some constant $C$ independent of $\eps$.  To obtain the three inequalities above, we are using that the image of $K_\eps^\theta$ under the reflexion about the $y$-axis contains $J_\eps^\theta$. This is a consequence of (MP).
Since the $L^2$ norms on the right-hand sides of the inequalities above are taken on a subset of $\Omega$, we can improve the estimate of  \eqref{proof: ineq 1}  and  \eqref{proof: ineq 2}  using H\"older's inequality and Sobolev embeddings to obtain
\begin{equation}\label{sobolev-1}
\norma{u_\eps}_{L^2(K_\eps^\theta)} \leq  |K_\eps^\theta|^{1/2} \norma{u_\eps}_{L^\infty(\Omega)} \leq c \bigl( \eps^{\theta + 1}\bigr)^{1/2} \norma{u_\eps}_{H^2(\Omega)}
\end{equation}
and in a similar way
\begin{equation}\label{sobolev-2}
\norma{\nabla u_\eps}_{L^2(K_\eps^\theta)} \leq  |K_\eps^\theta|^{\frac{1}{2} - \frac{1}{p}} \norma{\nabla u_\eps}_{L^p(\Omega)} \leq c \bigl(\eps^{\theta + 1}\bigr)^{\frac{1}{2} - \frac{1}{p}} \norma{u_\eps}_{H^2(\Omega)}
\end{equation}
for any $2<p<\infty$. Thus
\begin{equation}
\norma{u_\eps^s}_{L^2(J_\eps^\theta)} \leq C \eps^{\frac{\theta + 1}{2}} \norma{u_\eps}_{H^2(\Omega)},\ \ {\rm and}\ \
\norma{\nabla u_\eps^s}_{L^2(J_\eps^\theta)} \leq C \bigl(\eps^{\theta + 1}\bigr)^{\frac{1}{2} - \frac{1}{p}} \norma{u_\eps}_{H^2(\Omega)} \label{proof: decay ineq nabla u eps^s}.
\end{equation}

We also get
\begin{equation}\label{bound-second-derivatives}
\norma{D^\alpha u_\eps^s}_{L^2(J_\eps^\theta)} \leq C\norma{u_\eps}_{H^2(\Omega)}\, .
\end{equation}

\noindent We define now the function
\[
\psi_\eps = (u_\eps - u_\eps^s) |_{J_\eps^\theta}  \in H^2(J_\eps^\theta).
\]
Then $\psi_\eps = 0 = \nabla \psi_\eps$ on $L_\eps$. Let us first estimate $\norma{\nabla u_\eps}_{L^2(J_\eps^\theta)}$. Since we have
\[
\norma{\nabla u_\eps}^2_{L^2(J_\eps^\theta)} = \sum_{i=1}^2 \int_{J_\eps^\theta} \Big\lvert \frac{\partial u_\eps}{\partial x_i} \Big\rvert^2 dx,
\]
we can directly estimate the $L^2$-norm of the partial derivatives. 
Since $ \partial_{x_i} \psi_\eps = 0 $ on $L_\eps$ for all $i=1,2$ we apply a one-dimensional Poincaré inequality in the $x$-direction. We proceed as follows. For each $x_2\in (\eps g(\eps^\theta),\eps g(0))$ we denote by $h_\eps(x_2)$ the unique number such that $\eps g(h_\eps(x_2))=x_2$  (that is, the inverse function of $\eps g(\cdot)$,  which exists because of hypothesis (MP)). For $x_2\in (0,\eps g(\eps^\theta))$ we define $h_\eps(x_2)=\eps^{\theta}$.  Observe that $0\leq h_\eps(x_2)\leq \eps^\theta $  and that $J_\eps^\theta$ can be expressed as $J_\eps^\theta=\{(x_1,x_2):  0<x_2<\eps g(0); 0<x_1<h_\eps(x_2)\}$.  Hence, for $i=1,2$ we have
\begin{equation} \label{proof: Poincare ineq-1D}
\bigg \lVert \frac{\partial \psi_\eps}{\partial x_i}(\cdot,x_2) \bigg \rVert^2_{L^2(0,h_\eps(x_2))} \leq \frac{1}{\lambda_1(h_\eps(x_2))} \bigg \lVert   \frac{\partial}{\partial x_1} \bigg(\frac{\partial \psi_\eps}{\partial x_i}(\cdot, x_2) \bigg)\bigg \rVert^2_{L^2(0,h_\eps(x_2))}
\end{equation}
where $\lambda_1(\rho)=\bigl(\frac{\pi}{2}\bigr)^2 \rho^{-2}$ is the first eigenvalue of the problem
\[
\begin{cases}
&-v''=\lambda v, \quad \text{in $(0,\rho)$,}\\
&v(0)= 0,\\
&v'(\rho)= 0.
\end{cases}
\]
Since $0\leq h_\eps(x_2)\leq \eps^\theta$, we get the bound $\lambda_1(h_\eps(x_2))\geq \bigl(\frac{\pi}{2\eps^\theta }\bigr)^2$ and integrating in \eqref{proof: Poincare ineq-1D} with respect to  $x_2\in(0,\eps g(0))$, we get
\begin{equation} \label{proof: Poincare ineq}
\bigg \lVert \frac{\partial \psi_\eps}{\partial x_i} \bigg \rVert^2_{L^2(J_\eps^\theta)} \leq \biggl(\frac{2\eps^\theta }{\pi}\biggr)^2  \bigg \lVert \frac{\partial^2 \psi_\eps}{\partial x \partial x_i} \bigg \rVert^2_{L^2(J_\eps^\theta)}.
\end{equation}

\noindent Now note that
$
\Big\lVert \frac{\partial^2 \psi_\eps}{\partial x \partial x_i} \Big\rVert_{L^2(J_\eps^\theta)} \leq C \norma{u_\eps}_{H^2(\Omega_\eps)} \leq C R
$
for all $\eps > 0$, where we have used \eqref{bound-second-derivatives}. Hence we rewrite inequality \eqref{proof: Poincare ineq} in the following way:
\begin{equation} \label{proof: decay ineq psi_eps}
\Big \lVert \frac{\partial \psi_\eps}{\partial x_i} \Big \rVert_{L^2(J_\eps^\theta)} \leq \frac{2}{\pi} \eps^\theta  (C R + o(1)) = O(\eps^\theta)
\end{equation}
as $\eps \to 0$, for $i=1,2$.

Finally, by the inequalities  \eqref{proof: decay ineq nabla u eps^s}, \eqref{proof: decay ineq psi_eps} we deduce that
\begin{equation}
\begin{split}
\norma{\nabla u_\eps}_{L^2(J_\eps^\theta)} &\leq \norma{\nabla \psi_\eps}_{L^2(J_\eps^\theta)} + \norma{\nabla u^s_\eps}_{L^2(J_\eps^\theta)}\\
&\leq O(\eps^\theta) + C \bigl(\eps^{\theta + 1}\bigr)^{\frac{1}{2} - \frac{1}{p}} \norma{u_\eps}_{H^2(\Omega)}\leq O(\eps^\theta),
\end{split}
\end{equation}
where we  have used that $(\theta + 1) (1/2 - 1/p) > \theta$ for large enough $p$.

It remains to prove that $\norma{u_\eps}_{L^2(J_\eps^\theta)}= O(\eps^{2\theta})$ as $\eps \to 0$. We can repeat the argument for $u_\eps$ instead of $\partial_{x_i} u_\eps$, with the difference that now we can improve the decay of $\norma{\psi_\eps}_{L^2(J_\eps^\theta)}$ by using  the one-dimensional Poincar\'{e} inequality twice. More precisely we have that
\[
\norma{\psi_\eps}_{L^2(J_\eps^\theta)} \leq \Big(\frac{2}{\pi}\Big)^2 \eps^{2\theta} \bigg \lVert \frac{\partial^2 \psi_\eps}{\partial x^2} \bigg \rVert_{L^2(J_\eps^\theta)}
\]
from which we deduce
$
\norma{\psi_\eps}_{L^2(J_\eps^\theta)} = O(\eps^{2\theta})
$
as $\eps \to 0$. Hence,
\begin{equation}\label{eq:estimate ueps}
\begin{split}
\norma{u_\eps}_{L^2(J_\eps^\theta)} &\leq \norma{\psi_\eps}_{L^2(J_\eps^\theta)} + \norma{u^s_\eps}_{L^2(J_\eps^\theta)}
\leq O(\eps^{2\theta}) + C \eps^{\frac{\theta + 1}{2}} \norma{u_\eps}_{H^2(\Omega)} = O(\eps^{2\theta})
\end{split}
\end{equation}
as $\eps \to 0$, concluding the proof.
\end{proof}

We can now give a proof of Theorem \ref{thm: (MP) implies (H)}.
\begin{proof}[Proof of Theorem \ref{thm: (MP) implies (H)}]
Let $u_\eps\in H^2(\Omega_\eps)$ be such that $\norma{u_\eps}_{H^2(\Omega_\eps)}\leq R$ for any $\eps > 0$. We prove that the H-Condition holds if we choose $\overline{u}_\eps$ as in \eqref{def: u bar} with $\gamma <1/3$. Note that $u_\eps \equiv \overline{u}_\eps$ on $R_\eps \setminus J_\eps^\beta$. Let us first estimate $\norma{\overline{u}_\eps}_{L^2(J_\eps^\beta)}$. By a change of variable and by \eqref{eq: asymptotics f'} we deduce that
\begin{equation}
\begin{split}
\norma{\overline{u}_\eps}^2_{L^2(J_\eps^\beta)} &= \int_0^{\eps^\beta} \int_0^{\eps g(x)} |(u_\eps \chi_\eps^\gamma)(f(x),y)|^2\, dy dx\\
&= \int_{-\eps^\gamma}^{\eps^\beta} \int_0^{\eps g(f^{-1}(z))} |(u_\eps \chi_\eps^\gamma)(z,y)|^2 |f'(f^{-1}(z))|^{-1}\, dy dz\\
&\leq (1+o(1)) \int_{-\eps^\gamma}^{\eps^\beta} \int_0^{\eps g(f^{-1}(z))} |(u_\eps \chi_\eps^\gamma)(z,y)|^2 dy dz\\
&\leq (1+o(1)) \norma{u_\eps}^2_{L^2(Z_\eps^\gamma)},
\end{split}
\end{equation}
where $Z_\eps^\gamma = \{ (x,y) \in \Omega_\eps : -\eps^\gamma < x < \eps^\beta, 0 < y < \eps g(f^{-1}(x)) \}$. Note that since the function $g$ is non increasing, then $Z_\eps^\gamma\subset K_\eps^{\gamma}\cup J_\eps^\beta$. Hence,
\begin{equation} \label{proof: estimate overline(u)}
\norma{\overline{u}_\eps}_{L^2(J_\eps^\beta)}^2 \leq (1+o(1))( \norma{u_\eps}_{L^2(K^\gamma_\eps)}^2+ \norma{u_\eps}_{L^2(J_\eps^\beta)}^2).
\end{equation}
Note that the last summand in the right-hand side of \eqref{proof: estimate overline(u)} behaves as $O(\eps^{4\beta})$ as $\eps \to 0$ because of Proposition \ref{prop: sym arg}. Also by \eqref{sobolev-1} with $\theta$ replaced by $\gamma$, we get
\[
\norma{u_\eps}_{L^2(K^\gamma_\eps)} \leq c \eps^{\frac{\gamma+1}{2}} \norma{u_\eps}_{H^2(\Omega)},
\]

\noindent Thus,
\[
\norma{\overline{u}_\eps}_{L^2(J_\eps^\beta)}^2 \leq (1+o(1)) (O(\eps^{4\beta}) + O(\eps^{\gamma+1}) = O(\eps^{4\beta})
\]
as $\eps \to 0$. We then have by Proposition \ref{prop: sym arg} that
\[
\norma{u_\eps - \overline{u}_\eps}_{L^2(R_\eps)} = \norma{u_\eps - \overline{u}_\eps}_{L^2(J_\eps^\beta)} \leq \norma{u_\eps}_{L^2(J_\eps^\beta)} + \norma{\overline{u}_\eps}_{L^2(J_\eps^\beta)} = O(\eps^{2\beta})
\]
as $\eps \to 0$. This concludes the proof of $(i)$ in the H-Condition.

In order to prove $(ii)$  from Definition \ref{def: H condition}, we first need to compute $\norma{\nabla \overline{u}_\eps}_{L^2(J_\eps^\beta)}$ and $\norma{D^2 \overline{u}_\eps}_{L^2(J_\eps^\beta)}$. We have
\[
\begin{split}
&\frac{\partial  \overline{u}_\eps}{\partial x} (x,y)= \Bigg[\bigg(\frac{\partial u_\eps }{\partial x} \chi_\eps^\gamma\bigg) (f(x),y) + (u_\eps (\chi_\eps^\gamma)')(f(x),y) \Bigg] f'(x)\\
&\frac{\partial  \overline{u}_\eps}{\partial y} (x,y)= \bigg(\frac{\partial u_\eps}{\partial y} \chi_\eps^\gamma\bigg)(f(x),y).
\end{split}
\]
Hence,
\begin{equation}
\label{proof: gradient estimate}
\begin{split}
\norma{\nabla \overline{u}_\eps}_{L^2(J_\eps^\beta)} &\leq \norma{f'}_{L^\infty} \bigl( \norma{\nabla u_\eps (f(\cdot),\cdot)}_{L^2(J_\eps^\beta)} + \norma{(u_\eps (\chi_\eps^\gamma)')(f(\cdot),\cdot)}_{L^2(J_\eps^\beta)}\bigr)\\
&\leq  \norma{f'}_{L^\infty} \norma{f'}_{L^\infty}^{-1/2}\bigl(\norma{\nabla u_\eps}_{L^2(K_\eps^\gamma \cup J_\eps^\beta)} + c_1 \norma{\eps^{-\gamma} u_\eps}_{L^2(K_\eps^\gamma)}\bigr)\\
&\leq (1 + o(1)) \bigl(\norma{\nabla u_\eps}_{L^2(K_\eps^\gamma)} + \norma{\nabla u_\eps}_{L^2(J_\eps^\beta)} + c_1 \eps^{-\gamma} \norma{u_\eps}_{L^2(K_\eps^\gamma)}\bigr)
\end{split}
\end{equation}
where we have used the definition of $\chi_\eps^\gamma$ and the change of variables $(f(x), y) \mapsto (x, y)$. By Proposition \ref{prop: sym arg} we know that $\norma{\nabla u_\eps}_{L^2(J_\eps^\beta)} = O(\eps^{\beta})$ as $\eps \to 0$. Moreover, by \eqref{sobolev-1}, \eqref{sobolev-2} with $\theta$ replaced by $\gamma$, we deduce that
\[
\norma{u_\eps}_{L^2(K_\eps^\gamma)} = O(\eps^{  \frac{\gamma+1}{2}} ), \quad\quad \norma{\nabla u_\eps}_{L^2(K_\eps^\gamma)} = O(\eps^{\gamma_p}),
\]
for any $p < \infty$, where we have set
\[
\gamma_p = \biggl(\frac{1}{2} - \frac{1}{p}\biggr)(\gamma + 1).
\]
Finally, we deduce by \eqref{proof: gradient estimate} that
\begin{equation} \label{proof: gradient estimate final}
\norma{\nabla \overline{u}_\eps}_{L^2(J_\eps^\beta)} \leq (1+o(1)) (O(\eps^{\gamma_p}) + O(\eps^{\beta}) + \eps^{-\gamma} O(\eps^{\gamma_p}) ) = O(\eps^{\beta})
\end{equation}
because $\gamma_p-\gamma>\beta$,  for sufficiently large $p$ (note that  $\beta < (1-\gamma )/2$ for $\gamma < 1/3$).

We now estimate the $L^2$ norm of $D^2 \overline{u}_\eps$. In order to simplify our  notation we write $F(x,y) = (f(x),y)$, $\chi_\eps^\gamma = \chi$, $\bar u_\eps=\bar u$, $u_\eps=u$ and we use the subindex notation for the partial derivatives, that is, $u_x=\frac{\partial u}{\partial x}$ and so on. First, note that
\begin{equation}\label{2D-eq1}
\begin{split}
&\bar u_{xx} = \Big[\Big(u_{xx} \chi + 2 u_x \chi' + u \chi''\Big) \circ F \Big] \cdot |f'|^2 + \Big[\Big(u_x \chi + u \chi' \Big) \circ F\Big] \cdot f'', \\
&\bar u_{xy} = \Big[\Big( u_{xy} \chi + u_y \chi' \Big) \circ F \Big] \cdot f', \\
&\bar u_{yy} =\Big( u_{yy} \chi \Big) \circ F,
\end{split}
\end{equation}
and we may write
\begin{equation*}
\bar u_{xx} = [u_{xx} \chi \circ F]\cdot |f'|^2+ R_1,  \quad  \bar u_{xy} = [u_{xy} \chi \circ F] \cdot f'+ R_2, \quad \bar u_{yy} = u_{yy} \chi  \circ F.
\end{equation*}
where
\begin{equation*}
\begin{split}
&R_1= \Big[\Big( 2 u_x \chi' + u \chi''\Big) \circ F \Big] \cdot |f'|^2 + \Big[\Big(u_x \chi + u \chi' \Big) \circ F\Big] \cdot f'', \\
&R_2 =  u_y \chi' \circ F \cdot f'.
\end{split}
\end{equation*}

We now show that $\|R_1\|_{L^2(J_\eps^\beta)}=o(1)$, $\|R_2\|_{L^2(J_\eps^\beta)}=o(1)$ as $\eps\to 0$. For this, we will prove that each single term in $R_1$ and $R_2$ is $o(1)$ as $\eps\to 0$.  Recall that $f'(x)=1+o(1)$ and $f''(x)=o(1)$, $\chi'=O(\eps^{-\gamma})$ and $\chi''=O(\eps^{-2\gamma})$  for $x\in (0,\eps^\beta)$. By a change of variables, by the Sobolev Embedding Theorem and the definition of $\chi$ it is easy to deduce that
\begin{align*}
&\norma{(u_x \chi')\circ F}_{L^2(J_\eps^\beta)} \leq (1+o(1)) \norma{u_x \chi'}_{L^2(K^\gamma_\eps)} \leq C R \eps^{\gamma_p - \gamma} = O(\eps^\beta)\\
&\norma{(u \chi'')\circ F}_{L^2(J_\eps^\beta)} \leq c_2 (1+o(1)) \norma{u \eps^{-2\gamma}}_{L^2(K_\eps^\gamma)} \leq C R \eps^{\frac{1-3\gamma}{2}}\\
&\norma{(u_y \chi')\circ F}_{L^2(J_\eps^\beta)} \leq c_1 (1+o(1))  \norma{\eps^{-\gamma} u_y}_{L^2(K^\gamma_\eps)} \leq C R \eps^{\gamma_p - \gamma}= O(\eps^\beta)\, .
\end{align*}
By \eqref{proof: gradient estimate final} we also have
\begin{equation}
\norma{(u_x \chi + u \chi' ) \circ F}_{L^2(J_\eps^\beta)} \leq (1+o(1)) \norma{\nabla \overline{u}_\eps}_{L^2(J_\eps^\beta)} = O(\eps^\beta).
\end{equation}
Hence the $L^2$ norms of  $R_1$, $R_2$ vanish as $\eps \to 0$. In particular,
\begin{equation*}
\norma{D^2 \overline{u}_\eps}_{L^2(J_\eps^\beta)} =  (1 + o(1))\norma{D^2 u_\eps}_{L^2(K_\eps^\gamma \cup J_\eps^\beta)} + O(\eps^{\frac{1-3\gamma}{2}}) + O(\eps^\beta),
\end{equation*}
as $\eps\to 0$. In a similar way we can also prove that
\begin{equation*}
\norma{\Delta \overline{u}_\eps}_{L^2(J_\eps^\beta)}  =   (1 + o(1))\norma{\Delta u_\eps}_{L^2(K_\eps^\gamma \cup J_\eps^\beta)} + O(\eps^{\frac{1-3\gamma}{2}}) + O(\eps^\beta),
\end{equation*}
as $\eps\to 0$. Hence,
\begin{multline}
\label{proof: channel energy}
(1-\sigma) \norma{D^2 \overline{u}_\eps}^2_{L^2(J_\eps^\beta)} + \sigma \norma{\Delta \overline{u}_\eps}^2_{L^2(J_\eps^\beta)} + \tau \norma{\nabla \overline{u}_\eps }^2_{L^2(J_\eps^\beta)}\\
  =(1-\sigma)\norma{D^2 u_\eps}^2_{L^2(K_\eps^\gamma \cup J_\eps^\beta)} + \sigma \norma{\Delta u_\eps}^2_{L^2(K_\eps^\gamma \cup J_\eps^\beta)} + o(1).
\end{multline}
By adding to both handsides of \eqref{proof: channel energy} {\small$(1-\sigma)\norma{D^2 \overline{u}_\eps}^2_{L^2(R_\eps \setminus J_\eps^\beta)}$, $\sigma \norma{\Delta \overline{u}_\eps}^2_{L^2(R_\eps \setminus J_\eps^\beta)}$} and the lower order term $\tau \norma{\nabla \overline{u}_\eps }^2_{L^2(R_\eps \setminus J_\eps^\beta)}$, and keeping in account that $\overline{u}_\eps \equiv u_\eps$ on $R_\eps \setminus J_\eps^\beta$ we deduce that
\begin{multline}\label{mon}
(1-\sigma) \norma{D^2 \overline{u}_\eps}^2_{L^2(R_\eps)} + \sigma \norma{\Delta \overline{u}_\eps}^2_{L^2(R_\eps)} + \tau \norma{\nabla \overline{u}_\eps }^2_{L^2(R_\eps)}\\
  = (1-\sigma)\norma{D^2 u_\eps}^2_{L^2(K_\eps^\gamma \cup R_\eps)} + \sigma\norma{\Delta u_\eps}^2_{L^2(K_\eps^\gamma \cup R_\eps)} + \tau \norma{\nabla u_\eps }^2_{L^2(R_\eps \setminus J_\eps^\beta)} + o(1)\\
\leq  (1-\sigma)\norma{D^2 u_\eps}^2_{L^2(\Omega_\eps)} + \sigma \norma{\Delta u_\eps}^2_{L^2(\Omega_\eps)} + \tau \norma{\nabla u_\eps }^2_{L^2(\Omega_\eps)} + o(1),
\end{multline}
as $\eps \to 0$, concluding the proof of $(ii)$ in the H-Condition.
 Note that  in \eqref{mon}, we have used the monotonicity of the quadratic form with respect to inclusion of sets. Such property is straightforward
for $\sigma \in [0,1)$. In the case $\sigma \in (-1,0)$ it follows by observing that
\begin{multline*}
(1-\sigma) \bigl[ u^2_{xx} + 2 u^2_{xy} + u^2_{yy} \bigr] + \sigma \bigl[ u^2_{xx} + 2 u_{xx} u_{yy} + u^2_{yy} \bigr]\\
 \geq u^2_{xx} + u^2_{yy} + \sigma (u^2_{xx} + u^2_{yy} ) = (1+\sigma) (u^2_{xx} + u^2_{yy} ) > 0,
\end{multline*}
for all $u \in H^2(\Omega_\eps)$.
\end{proof}


\section{Asymptotic analysis on the thin domain}
\label{sec: thin plates}
The purpose of this section is to study the convergence of the eigenvalue  problem \eqref{PDE: R_eps} as $\eps \to 0$. Since the thin domain $R_\eps$ is shrinking to the segment $(0,1)$ as $\eps \to 0$, we plan to identify the  limiting problem in $(0,1)$ and to prove that the resolvent operator of problem \eqref{PDE: R_eps} converges as $\epsilon \to 0$  to the resolvent operator of the limiting problem in a suitable sense which guarantees the spectral convergence.

More precisely, we shall prove that the  the limiting eigenvalue problem in $[0,1]$ is
\begin{equation}\label{classiceigenode}
\begin{cases}
\frac{1-\sigma^2}{g} (gh'')''- \frac{\tau}{g}(gh')' + h = \theta h, &\text{in $(0,1)$,}\\
h(0)=h(1)=0,&\\
h'(0)=h'(1)=0.&
\end{cases}
\end{equation}
Note that the weak formulation of (\ref{classiceigenode}) is
\[
(1-\sigma^2)\int_0^1  h''\psi''gdx+\tau \int_0^1h'\psi'gdx+\int_0^1h\psi g dx=\theta \int_0^1h\psi g dx,
\]
for all $\psi\in H^2_0(0,1)$, where $h$ is to be found in the Sobolev space $H^2_0(0,1)$. In the sequel, we shall denote by $L^2_g(0,1)$ the Hilbert space $L^2((0,1); g(x)dx)$.

\subsection{Finding the limiting problem}
\label{subsection: finding limit prb}

In order to use  thin domain techniques in the spirit of \cite{HR}, we need to fix a reference domain $R_1$ and pull-back the eigenvalue problem defined on $R_{\eps}$ onto $R_1$ by means of a suitable diffeomorphism.

Let $R_1$ be the rescaled domain obtained by setting $\eps = 1$ in the definition of $R_\eps$ (see \eqref{def: R_eps}). For any fixed $\eps >0$, let  $\Phi_\eps$ be the map from $R_1$ to $R_\eps$ defined by $\Phi_\eps(x',y') = (x', \eps y')= (x,y)$ for all $(x',y') \in R_1$. We consider the composition operator $T_\eps$ from $L^2(R_\eps; \eps^{-1}dxdy)$ to $L^2(R_1)$ defined by
\[
T_\eps u(x',y') = u \circ \Phi_\eps (x', y') = u(x', \eps y')\, ,
\]
for all $u \in L^2(R_\eps)$, $(x',y') \in R_1$.  We also endow the spaces $H^2(R_1)$ and $H^2(R_\eps)$    with the norms  defined by
\begin{multline}
\|\varphi\|_{H^2_{\eps, \sigma, \tau}(R_1)}^2 =\int_{R_1} \Bigg((1-\sigma) \Bigg[ \abs*{\frac{\partial^2 \varphi}{\partial x^2}}^2 + \frac{2}{\eps^2}\abs*{\frac{\partial^2 \varphi}{\partial x \partial y}}^2 + \frac{1}{\eps^4} \abs*{\frac{\partial^2 \varphi}{\partial y^2}}^2 \Bigg]\\
+ \sigma \abs*{\frac{\partial^2 \varphi}{\partial x^2} + \frac{1}{\eps^2}\frac{\partial^2 \varphi}{\partial y^2}}^2  + \tau \Bigg[ \abs*{\frac{\partial \varphi}{\partial x}}^2 + \frac{1}{\eps}\abs*{\frac{\partial \varphi}{\partial y}}^2 \Bigg] + \abs{\varphi}^2\, \Bigg)dxdy\, ,
\end{multline}

\begin{multline}
\|\varphi\|_{H^2_{\sigma, \tau}(R_\eps)}^2 =\int_{R_\eps} \Bigg((1-\sigma) \Bigg[ \abs*{\frac{\partial^2 \varphi}{\partial x^2}}^2 + 2\abs*{\frac{\partial^2 \varphi}{\partial x \partial y}}^2 +  \abs*{\frac{\partial^2 \varphi}{\partial y^2}}^2 \Bigg]\\
+ \sigma \abs*{\frac{\partial^2 \varphi}{\partial x^2}+ \frac{\partial^2 \varphi}{\partial y^2}}^2 + \tau \Bigg[ \abs*{\frac{\partial \varphi}{\partial x}}^2 + \abs*{\frac{\partial \varphi}{\partial y}}^2 \Bigg] + \abs{\varphi}^2\,\Bigg) dxdy\, .
\end{multline}
It is not difficult to see that if $\varphi\in H^2(R_\eps)$ then
\[\|T_\eps \varphi\|_{H^2_{\eps,\sigma,\tau}(R_1)}^2= \eps^{-1} \|\varphi\|_{H^2_{\sigma,\tau}(R_\eps)}^2.\]

We consider  the following Poisson problem with datum $f_\eps \in L^2(R_\eps)$:
\begin{equation} \label{PDE: R_eps f_eps}
\begin{cases}
\Delta^2 v_\eps - \tau \Delta v_\eps + v_\eps = f_\eps, &\text{in $R_\eps$},\\
(1-\sigma) \frac{\partial^2 v_\eps}{\partial n_\eps^2} + \sigma \Delta v_\eps = 0, &\textup{on $\Gamma_\eps$},\\
\tau \frac{\partial v_\eps}{\partial n_\eps} - (1-\sigma) \Div_{\partial \Omega_\eps}(D^2v_\eps \cdot n_\eps)_{\partial \Omega_\eps} - \frac{\partial(\Delta v_\eps)}{\partial n_\eps} = 0, &\textup{on $\Gamma_\eps$,}\\
v = 0 = \frac{\partial v_\eps}{\partial n_\eps}, &\text{on $L_\eps$.}
\end{cases}
\end{equation}
Note that the energy space associated with Problem \eqref{PDE: R_eps f_eps} is exactly $H^2_{L_{\eps}}(R_\eps)$.
By setting  $\tilde{v}_\eps = v_\eps (x', \eps y')$, $\tilde{f}_\eps = f(x', \eps y')$ and  pulling-back problem   (\ref{PDE: R_eps f_eps}) to $R_1$ by means of  $\Phi_\eps$, we get the following equivalent problem in $R_1$ in the unknown  $\tilde{v}_\eps $ (we use again the variables $(x,y)$ instead of $(x',y')$ to simplify the notation):
{\small \begin{equation} \label{PDE: R_1}
\begin{cases}
\frac{\partial^4 \tilde{v}_\eps}{\partial x^4} + \frac{2}{\eps^2} \frac{\partial^4 \tilde{v}_\eps}{\partial x^2 \partial y^2} + \frac{1}{\eps^4} \frac{\partial^4 \tilde{v}_\eps}{\partial y^4} - \tau \Big( \frac{\partial^2 \tilde{v}_\eps}{\partial x^2} + \frac{1}{\eps^2} \frac{\partial^2 \tilde{v}_\eps}{\partial y^2} \Big) + \tilde{v}_\eps = \tilde{f}_\eps, &\text{in $R_1$},\\
(1-\sigma) \Big( \frac{\partial^2 \tilde{v}_\eps}{\partial x^2}\tilde{n}_x^2 + \frac{2}{\eps} \frac{\partial^2 \tilde{v}_\eps}{\partial x \partial y}\tilde{n}_x \tilde{n}_y + \frac{1}{\eps^2}\frac{\partial^2 \tilde{v}_\eps}{\partial y^2}\tilde{n}_y^2 \Big)  + \sigma \Big( \frac{\partial^2 \tilde{v}_\eps}{\partial x^2} + \frac{1}{\eps^2} \frac{\partial^2 \tilde{v}_\eps}{\partial y^2} \Big)= 0, &\textup{on $\Gamma_1$},\\
\tau \Big(\frac{\partial \tilde{v}_\eps}{\partial x} \tilde{n}_x + \frac{1}{\eps} \frac{\partial \tilde{v}_\eps}{\partial y} \tilde{n}_y \Big)  - (1-\sigma) \Div_{\Gamma_{1,\eps}}(D_\eps^2 \tilde{v}_\eps \cdot \tilde{n}){_{\Gamma_{1,\eps}} }- \nabla_\eps(\Delta_\eps \tilde{v}_\eps) \cdot \tilde{n} = 0, &\textup{on $\Gamma_{1}$,}\\
\tilde{v}_\eps = 0 = \frac{\partial \tilde{v}_\eps}{\partial x} n_x + \frac{1}{\eps}\frac{\partial \tilde{v}_\eps}{\partial y} \tilde{n}_y , &\text{on $L_1$.}
\end{cases}
\end{equation}}
Here   $\tilde{n} = (\tilde{n}_x, \tilde{n}_y) = (n_x, \eps^{-1}n_y)$ and  the operators $\Delta_\eps, \nabla_\eps$ are the standard differential operators associated with $(\partial_x, \eps^{-1} \partial_y)$. Moreover,
\[\Div_{\Gamma_{1,\eps}} F = \frac{\partial F_1}{\partial x} + \frac{1}{\eps}\frac{\partial F_2}{ \partial y} - \tilde{n}_\eps \nabla_{\!\eps} F\, \tilde{n}_\eps\]
and $(F)_{\Gamma_{1,\eps}} = F - (F, \tilde{n})\, \tilde{n}$ for any vector field $F =(F_1,F_2)$.

Assume now that   the data  $f_{\epsilon }$, $\eps>0$ are such that    $(\tilde{f}_\eps)_{\eps>0}$ is an equibounded family in $L^2(R_1)$, i.e.,
\begin{equation} \label{hypotesis on f eps}
\int_{R_1} \abs{\tilde{f}_\eps}^2\,dxdy' \leq c, \quad \textup{or equivalently} \quad \int_{R_\eps} \abs*{f_\eps}^2 dxdy \leq c \eps\, ,
\end{equation}
for all $\eps>0$, where  $c$  is a positive  constant not depending on $\eps$.

We plan to pass to the limit in  \eqref{PDE: R_1} as $\eps \to 0$ by arguing as follows. If $\tilde{v}_\eps \in H^2_{L_1}(R_1)$ is the solution to problem \eqref{PDE: R_1}, then we have the following integral equality
\small
\begin{multline}
\label{eq: weak formulation R1}
(1-\sigma) \int_{R_1} \frac{\partial^2 \tilde{v}_\eps }{\partial x^2} \frac{\partial^2 \varphi }{\partial x^2} + \frac{2}{\eps^2} \frac{\partial^2 \tilde{v}_\eps }{\partial x \partial y} \frac{\partial^2 \varphi }{\partial x \partial y} + \frac{1}{\eps^4} \frac{\partial^2 \tilde{v}_\eps }{\partial y^2} \frac{\partial^2 \varphi }{\partial y^2} dx\\
+ \sigma \int_{R_1} \Big(\frac{\partial^2 \tilde{v}_\eps }{\partial x^2} + \frac{1}{\eps^2} \frac{\partial^2 \tilde{v}_\eps }{\partial y^2}  \Big) \Big( \frac{\partial^2 \varphi }{\partial x^2} + \frac{1}{\eps^2} \frac{\partial^2 \varphi }{\partial y^2}     \Big) dx\\
 + \tau \int_{R_1}  \frac{\partial \tilde{v}_\eps }{\partial x} \frac{\partial \varphi }{\partial x} + \frac{1}{\eps^2} \frac{\partial \tilde{v}_\eps }{\partial y} \frac{\partial \varphi }{\partial y} dx + \int_{R_1} \tilde{v}_\eps \varphi dx = \int_{R_1} \tilde{f}_\eps \varphi dx
\end{multline}
\normalsize
for all $\varphi \in H^2_{L_1}(R_1)$. By choosing $\varphi = \tilde{v}_\eps$ we deduce the following apriori estimate:
\small
\begin{multline}
\label{ineq: apriori ineq tilde v eps}
(1-\sigma) \int_{R_1} \abs*{\frac{\partial^2 \tilde{v}_\eps }{\partial x^2}}^2 + \frac{2}{\eps^2} \abs*{\frac{\partial^2 \tilde{v}_\eps }{\partial x \partial y}}^2 + \frac{1}{\eps^4} \abs*{\frac{\partial^2 \tilde{v}_\eps }{\partial y^2}}^2 dx + \sigma \int_{R_1} \abs*{\frac{\partial^2 \tilde{v}_\eps }{\partial x^2} + \frac{1}{\eps^2} \frac{\partial^2 \tilde{v}_\eps }{\partial y^2}}^2  dx\\
+ \tau \int_{R_1} \abs*{\frac{\partial \tilde{v}_\eps }{\partial x}}^2 + \frac{1}{\eps^2} \abs*{\frac{\partial \tilde{v}_\eps }{\partial y}}^2 dx + \int_{R_1} \abs{\tilde{v}_\eps}^2 dx \leq \frac{1}{2} \int_{R_1} \abs{\tilde{f}_\eps}^2\,dx + \frac{1}{2} \int_{R_1} \abs{\tilde{v}_\eps}^2\,dx
\end{multline}
\normalsize
for all $\eps>0$. This implies that $\norma{\tilde{v}_\eps}_{H^2_{\eps,\sigma, \tau}(R_1)} \leq C$ for all $\eps> 0$, in particular $\norma{\tilde{v}_\eps}_{H^2(R_1)} \leq C(\sigma, \tau)$ for all $\eps>0$; hence, there exists $v \in H^2(R_1)$ such that, up to a subsequence
$\tilde{v}_\eps \to v$, weakly in $H^2(R_1)$, strongly in $H^1(R_1)$. Moreover from \eqref{ineq: apriori ineq tilde v eps} we deduce that
\begin{align}
\label{ineq: decay y derivatives 1} &\norma*{\frac{\partial^2 \tilde{v}_\eps}{\partial x \partial y}}_{L^2(R_1)} \leq C \eps,    \quad\quad  \norma*{\frac{\partial \tilde{v}_\eps}{\partial y}}_{L^2(R_1)} \leq C \eps , \\
\label{ineq: decay y derivatives 2}&\norma*{\frac{\partial^2 \tilde{v}_\eps}{\partial y^2}}_{L^2(R_1)} \leq C \eps^2,   \end{align}
for all $\eps > 0$, hence there exists $u \in L^2(R_1)$ such that, up to a subsequence
\begin{equation}\label{weakly-to-u}
\frac{1}{\eps^2} \frac{\partial^2 \tilde{v}_\eps}{\partial y^2} \rightharpoonup u, \hbox{ weakly in }L^2(R_1)
\end{equation}
 as $\eps \to 0$. By \eqref{ineq: decay y derivatives 1} we deduce that the limit function $v$ is constant in $y$. Indeed, if we choose any function $\phi \in C^\infty_c(R_1)$, then
\[
\int_{R_1} v \frac{\partial \phi}{ \partial y} = \lim_{\eps \to 0} \int_{R_1} \tilde{v}_\eps \frac{\partial \phi}{ \partial y} = - \lim_{\eps \to 0} \int_{R_1} \frac{\partial \tilde{v}_\eps}{\partial y} \phi = 0,
\]
hence $\frac{\partial v}{\partial y} = 0$ and then $v(x,y) \equiv v(x)$ for almost all $(x,y) \in R_1$. This suggests to choose test functions $\psi$ depending only on $x$ in the weak formulation \eqref{eq: weak formulation R1}. Possibly passing to a subsequence, there exists $f \in L^2(R_1)$ such that
\[
\tilde{f}_\eps \rightharpoonup f \quad \quad \text{in $L^2(R_1)$, as $\eps \to 0$}.
\]
Let $\psi \in H^2_0(0,1)$. Then $\psi \in H^2(R_1)$ (here it is understood that the function is extended to the whole of $R_1$ by setting $\psi(x,y) = \psi(x)$ for all $(x,y) \in R_1$) and clearly $\psi \equiv 0$ on $L_1$. Use $\psi$ as a test function in \eqref{eq: weak formulation R1},  pass to the limit as $\eps \to 0$  and consider \eqref{weakly-to-u} to get
\begin{equation} \label{eq: limit probl weak x}
\int_0^1 \Big( \frac{\partial^2 v}{\partial x^2} \frac{\partial^2 \psi}{\partial x^2} + \sigma \M(u) \frac{\partial^2 \psi}{\partial x^2} + \tau \frac{\partial v}{\partial x} \frac{\partial \psi}{\partial x} + v \psi   \Big) g(x)\, dx = \int_0^1 \M(f) \psi g(x)\, dx
\end{equation}
for all $\psi \in H^2_0(0,1)$.
Here,  the averaging operator $\M $ is defined from $L^2(R_1)$ to $L^2_g(0,1)$ by
\[
\M  h (x) = \frac{1}{ g(x)} \int_0^{ g(x)} h(x,y)\, dy\, ,
\]
for all $h\in  L^2(R_1)$ and for almost all $x \in (0,1)$.

From \eqref{eq: limit probl weak x} we deduce that
\[
\frac{1}{g} (v'' g)'' + \frac{\sigma}{g} (\M(u) g)'' - \frac{\tau}{g} (v' g)' + v = \M(f), \quad\quad \text{in (0,1)},
\]
where the equality is understood in the sense of distributions.\\
Coming back to \eqref{eq: weak formulation R1} we may also choose test functions $\varphi(x,y) = \eps^2 \zeta(x,y)$, where $\zeta \in H_{L_1}^2(R_1)$.  Using \eqref{ineq: decay y derivatives 1}, \eqref{ineq: decay y derivatives 2} and letting $\eps \to 0$ we deduce
\begin{equation*}
(1-\sigma) \int_{R_1} u \frac{\partial^2 \zeta}{ \partial y^2} + \sigma \int_{R_1}\Big( \frac{\partial^2 v}{\partial x^2} \frac{\partial^2 \zeta}{\partial y^2} + u \frac{\partial^2 \zeta}{\partial y^2}    \Big) = 0
\end{equation*}
which can be rewritten as
\begin{equation}
\label{eq: identity 2 order deriv}
\int_{R_1} \Big( u + \sigma \frac{\partial^2 v}{\partial x^2} \Big) \frac{\partial^2 \zeta}{\partial y^2} = 0
\end{equation}
for all $\zeta \in H_{L_1}^2(R_1)$. In particular this holds for all $\zeta \in C^\infty_c(R_1)$, hence there exists the second order derivative
\begin{equation} \label{eq: second derivative yy = 0}
\frac{\partial^2}{\partial y^2}\Big( u + \sigma \frac{\partial^2 v}{\partial x^2}  \Big) = 0.
\end{equation}
Hence, $u(x,y) + \sigma \frac{\partial^2 v}{\partial x^2} = \psi_1(x) + \psi_2(x) y$ for almost all $(x,y) \in R_1$ and for some functions $\psi_1, \psi_2 \in L^2(R_1)$, and then   \eqref{eq: identity 2 order deriv} can be written as
\begin{equation} \label{eq: limit probl weak y}
  \int_{R_1} (\psi_1(x)+y\psi_2(x)) \frac{\partial^2 \zeta}{\partial y^2} = 0
\end{equation}
Integrating twice by parts in $y$ in equation \eqref{eq: limit probl weak y} we deduce that
\begin{equation}
\label{eq: boundary identity}
- \int_{\partial R_1} \psi_2(x) \zeta n_y dS + \int_{\partial R_1} (\psi_1(x)+y\psi_2(x)) \frac{\partial \zeta}{\partial y} n_y dS = 0
\end{equation}
for all $\zeta \in H_{L_1}^2(R_1)$. We are going to choose now particular functions $\zeta$ in \eqref{eq: boundary identity}.  Consider first  $b=\frac{1}{2}\min_{x\in [0,1]} g(x)>0$ so that the rectangle $(0,1)\times (0,b)\subset R_1$ and consider a function $\eta=\eta(y)$ with $\eta\in C^\infty(0,b)$ such that $\eta(y)=1+\alpha y$ for $0<y<b/4$,  where $\alpha\in \R$ is a parameter, and $\eta(y)\equiv 0$ for $y\in(\frac{3}{4}b,b)$. If we define $\zeta(x,y)=\theta(x)\eta(y)$ for $(x,y)\in (0,1)\times (0,b)$  where $\theta\in C_c^\infty(0,1)$ and we extend this function $\zeta$  by 0 to all of $R_1$, then we can use $\zeta$ in \eqref{eq: boundary identity} in order to obtain
\[
\alpha \int_0^1 \psi_1(x)\theta(x) dx - \int_0^1\psi_2(x)\theta(x)dx=0
\]
for all $\alpha\in \R$ and all $\theta\in C_0^\infty(0,1)$. But this easily implies that $\psi_1\equiv \psi_2\equiv 0$. Thus, we obtain
\[
u(x,y) = u(x) = - \sigma \frac{\partial^2 v(x)}{\partial x^2}
\]
for almost all $(x,y) \in R_1$, i.e., $\frac{1}{\eps^2} \frac{\partial^2 \tilde{v}_\eps}{ \partial y^2} \rightharpoonup - \sigma  \frac{\partial^2 v(x)}{\partial x^2}$ in $L^2(R_1)$. Hence $v$ solves the following limit problem
\begin{equation}\label{ODE: auxiliary problem sigma2}
\begin{cases}
\frac{1 - \sigma^2}{g} (gv'')'' - \frac{\tau}{g}(gv')' + v = \M(f) , &\text{in $(0,1)$}\\
v(0)=v(1)=0,&\\
v'(0)=v'(1)=0,&
\end{cases}
\end{equation}
and then by regularity theory we deduce that $v \in H^4(0,1)$.

\subsection{Spectral convergence }
\label{sec: spectral convergence}
We aim at proving the spectral convergence of the eigenvalues and eigenfunctions of problem \eqref{PDE: R_eps} to the corresponding eigenvalues and eigenfunctions of the one dimensional problem \eqref{ODE: limit problem}. To do so we shall prove the compact convergence of the  associated  resolvent operators combined with the computations carried out in the previous section.  Note that  the domain $R_\eps$ varies with $\eps$, hence the corresponding Hilbert spaces vary as well.  To bypass this problem we will use the notion of $\E$-convergence of the resolvent operators in $L^2$.  We  recall the basic definitions and results.\\
Let $\Hi_\eps$, $\epsilon >0$, be a family of Hilbert spaces. We assume the existence of a family of linear operators $\E_\eps \in \LL(\Hi_0, \Hi_\eps)$, $\epsilon >0$,  such that
\begin{equation}
\label{def: basic property E_eps}
\norma{\E_\eps u_0}_{\Hi_\eps} \to \norma{u_0}_{\Hi_0},\ \ {\rm as}\ \eps\to 0,
\end{equation}
for all $u_0 \in \Hi_0$.

\begin{definition} Let $\Hi_\eps$ and $\E_\eps$ be as above.
\begin{enumerate}[label =(\roman*)]
\item Let $u_\eps\in   \Hi_\eps$, $\eps >0$.  We say that $u_\eps$ $\E$-converges to $u$ as $\eps \to 0$ if $\norma{u_\eps - \E_\eps u}_{\Hi_\eps} \to 0$ as $\eps \to 0$. We write $u_\eps \overset{E}{\longrightarrow} u$.
\item Let $ B_\eps \in \LL(\Hi_\eps)$,  $\eps >0$. We say that $B_\eps$ $\E\E$-converges to a linear operator $B_0 \in \LL(\Hi_0)$ if $B_\eps u_\eps \overset{E}{\longrightarrow} B_0 u$ whenever $u_\eps \overset{E}{\longrightarrow} u\in \Hi_0$. We write $B_\eps \overset{EE}{\longrightarrow} B_0$.
\item Let $ B_\eps \in \LL(\Hi_\eps)$, $  \eps >0$. We say that $B_\eps$ compactly converges to $B_0 \in \LL(\Hi_0)$ (and we write $B_\eps \overset{C}{\longrightarrow} B_0$) if the following two conditions are satisfied:
    \begin{enumerate}[label=(\alph*)]
    \item $B_\eps \overset{EE}{\longrightarrow} B_0$ as $\eps \to 0$;
    \item for any family $u_\eps \in \Hi_{\eps}$, $\eps>0$, such that $\norma{u_\eps}_{\Hi_\eps}=1$ for all $\eps \in (0,1)$, there exists a subsequence $B_{\eps_k}u_{\eps_k}$ of $B_\eps u_\eps$ and $\bar{u} \in \Hi_0$ such that $B_{\eps_k}u_{\eps_k} \overset{E}{\longrightarrow} \bar{u}$ as $k \to \infty$.
    \end{enumerate}
\end{enumerate}
\end{definition}

For any $\eps \geq 0$, let $A_\eps$ be a (densely defined) closed, nonnegative differential operator on $\Hi_\eps$ with domain $\D(A_\eps) \subset \Hi_\eps$.  We assume for simplicity that $0$ does not belong to the spectrum of $A_{\eps}$ and that
\[
\textup{(H1): $A_\eps$ has compact resolvent $B_\eps := A_\eps^{-1}$ for any $\eps \in [0,1)$,}
\]
and
\[
\textup{(H2): $B_\eps \overset{C}{\longrightarrow} B_0 $, as $\eps \to 0$.}
\]
Given an eigenvalue $\lambda$ of $A_0$ we consider the generalized eigenspace $S(\lambda, A_0) := Q(\lambda, A_0)\Hi_0$, where
\[
Q(\lambda, A_0) = \frac{1}{2\pi i} \int_{|\xi - \lambda| = \delta} (\xi \mathbb{I} - A_0)^{-1}\, d\xi
\]
and $\delta > 0$ is such that the disk $\{\xi \in \mathbb{C} : |\xi - \lambda| \leq \delta \}$ does not contain any eigenvalue except for $\lambda$. In a similar way, if (H1),(H2) hold, then we can define   $S(\lambda, A_{\eps}) := Q(\lambda, A_{\eps})\Hi_{\eps}$, where
\[
Q(\lambda, A_\eps) = \frac{1}{2\pi i} \int_{|\xi - \lambda| = \delta} (\xi \mathbb{I} - A_\eps)^{-1}\, d\xi.
\]
This definition makes sense because for $\eps$ small enough $(\xi \mathbb{I} - A_\eps)$ is invertible for all $\xi$ such that $|\xi - \lambda| = \delta$, see \cite[Lemma 4.9]{ACJdE}. Then the following theorem holds.

\begin{theorem}
\label{thm: E conv -> spectral conv}
Let $A_\eps$, $A_0$ be operators as above satisfying  conditions (H1), (H2). Then the operators $A_{\eps}$ {\rm are spectrally convergent} to $A_0$ as $\eps\to 0$, i.e., the following statements hold:
\begin{enumerate}[label=(\roman*)]
\item If $\lambda_0$ is an eigenvalue of $A_0$, then there exists a sequence of eigenvalues $\lambda_\eps$ of $A_\eps$ such that $\lambda_\eps \to \lambda_0$ as $\eps \to 0$. Conversely, if $\lambda_\eps$ is an eigenvalue of $A_\eps$ for all $\eps >0$, and $\lambda_\eps \to \lambda_0$, then $\lambda_0$ is an eigenvalue of $A_0$.
\item There exists $\eps_0 > 0$ such that the dimension of the generalized eigenspace $S(\lambda_0, A_\eps)$ equals the dimension of $S(\lambda_0, A_0)$, for any eigenvalue $\lambda_0$ of $A_0$, for any $\eps \in [0,\eps_0)$.
\item If $\varphi_0 \in S(\lambda_0, A_0)$ then for any $\eps >0$ there exists  $\varphi_\eps \in S(\lambda_0, A_\eps)$ such that  $\varphi_\eps \overset{E}{\longrightarrow} \varphi_0$ as $\eps \to 0$.
\item If $\varphi_\eps \in S(\lambda_0, A_\eps)$ satisfies $\norma{\varphi_\eps}_{\Hi_\eps} = 1$ for all $\eps >0$, then  $\varphi_\eps $, $\eps >0$,  has an $\E$-convergent subsequence whose limit is in $S(\lambda_0, A_0)$.
\end{enumerate}
\end{theorem}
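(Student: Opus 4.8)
The plan is to deduce all four statements from the convergence of the Riesz spectral projections $Q(\lambda_0, A_\eps)$ to $Q(\lambda_0, A_0)$, which in turn I would obtain by passing to the limit under the contour integral. The essential analytic input is the $\E\E$-convergence (indeed compact convergence) of the resolvents $(\xi \mathbb{I} - A_\eps)^{-1}$ to $(\xi \mathbb{I} - A_0)^{-1}$, uniformly for $\xi$ on the circle $\{|\xi - \lambda_0| = \delta\}$. Since this circle is a compact set disjoint from $\sigma(A_0)$, once such uniform resolvent convergence is available the contour integrals $\frac{1}{2\pi i}\oint (\xi \mathbb{I} - A_\eps)^{-1}\, d\xi$ converge in the $\E\E$ (and compact) sense to $\frac{1}{2\pi i}\oint (\xi \mathbb{I} - A_0)^{-1}\, d\xi$, giving $Q(\lambda_0, A_\eps) \overset{EE}{\to} Q(\lambda_0, A_0)$.

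The heart of the argument is therefore the resolvent convergence, which I would reduce to a statement about the compact resolvents $B_\eps = A_\eps^{-1}$ provided by (H1). Using $B_\eps A_\eps = \mathbb{I}$ on $\D(A_\eps)$ one has the identity $(\xi \mathbb{I} - A_\eps)^{-1} = (\xi B_\eps - \mathbb{I})^{-1} B_\eps$, so that, writing $\zeta = \xi^{-1}$, everything is governed by the invertibility of $\zeta \mathbb{I} - B_\eps$. The key point is a uniform (in $\eps$) lower bound: I claim there are $c>0$ and $\eps_0>0$ with $\norma{(\zeta \mathbb{I} - B_\eps) u}_{\Hi_\eps} \geq c \norma{u}_{\Hi_\eps}$ for all $u \in \Hi_\eps$ and $\eps < \eps_0$, uniformly for $\zeta = \xi^{-1}$, $|\xi - \lambda_0| = \delta$. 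I would prove this by contradiction: if it failed there would be $u_\eps$ with $\norma{u_\eps}_{\Hi_\eps} = 1$ and $\norma{(\zeta \mathbb{I} - B_\eps) u_\eps}_{\Hi_\eps} \to 0$; by the compactness clause (b) in the definition of compact convergence, $B_\eps u_\eps$ has a subsequence $\E$-converging to some $\bar{u}$, whence $\zeta u_\eps = B_\eps u_\eps + (\zeta \mathbb{I} - B_\eps) u_\eps \overset{E}{\to} \bar{u}$ and so $u_\eps \overset{E}{\to} u := \zeta^{-1}\bar u$, with $\norma{u}_{\Hi_0}=1$ by \eqref{def: basic property E_eps}. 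The $\E\E$-convergence clause (a) then gives $B_\eps u_\eps \overset{E}{\to} B_0 u$, while by construction $B_\eps u_\eps \overset{E}{\to} \zeta u$; uniqueness of the $\E$-limit forces $B_0 u = \zeta u$ with $u \neq 0$, i.e.\ $\zeta \in \sigma(B_0)$, contradicting $\xi \notin \sigma(A_0)$. Once uniform invertibility is established, $\E\E$-convergence of $(\zeta\mathbb{I} - B_\eps)^{-1}$ follows from (H2) together with uniqueness of the limit, and uniformity in $\xi$ on the compact contour is obtained by a standard equicontinuity argument.

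From $Q(\lambda_0, A_\eps) \overset{C}{\to} Q(\lambda_0, A_0)$ the four conclusions follow by bookkeeping. Since the limit projection $Q(\lambda_0, A_0)$ has finite rank $m = \dim S(\lambda_0, A_0)$ and the convergence is compact, the ranks must agree for $\eps < \eps_0$: $\E$-convergence of a basis of $S(\lambda_0, A_0)$ produces $m$ asymptotically independent vectors in $S(\lambda_0, A_\eps)$, while the compactness clause prevents the rank from exceeding $m$; this is (ii). Statement (i) follows because $Q(\lambda_0, A_\eps) \neq 0$ for small $\eps$ forces $A_\eps$ to have spectrum inside each shrinking circle around $\lambda_0$ (giving $\lambda_\eps \to \lambda_0$), whereas encircling a point of $\rho(A_0)$ yields the zero projection and hence no nearby eigenvalues of $A_\eps$ (the converse). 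For (iii) I would set $\varphi_\eps = Q(\lambda_0, A_\eps)\E_\eps \varphi_0$ and use $Q(\lambda_0, A_\eps) \overset{EE}{\to} Q(\lambda_0, A_0)$ to get $\varphi_\eps \overset{E}{\to} Q(\lambda_0, A_0)\varphi_0 = \varphi_0$; for (iv) the compactness clause yields an $\E$-convergent subsequence of $\varphi_\eps$, and applying $Q(\lambda_0, A_\eps)$ shows its limit lies in $S(\lambda_0, A_0)$.

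The main obstacle is the uniform resolvent estimate in the second paragraph: everything hinges on ruling out a collapse of $\zeta \mathbb{I} - B_\eps$ as $\eps \to 0$, and it is precisely here that the compactness clause (b) of (H2) --- not merely $\E\E$-convergence --- is indispensable, since it is what upgrades the bounded sequence $u_\eps$ to one with a genuine nonzero $\E$-limit. The nonnegativity and discreteness of the spectra (from (H1)) only serve to guarantee that the contours can be chosen to isolate single eigenvalues and that the Riesz projections coincide with the spectral ones; the convergence mechanism itself is entirely carried by compact convergence of the resolvents.
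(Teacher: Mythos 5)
The paper offers no proof of this theorem: it is quoted from the abstract spectral--convergence framework of \cite{ACL} (Theorem 4.10 there), with the well-definedness of the Riesz projections $Q(\lambda,A_\eps)$ attributed to \cite[Lemma 4.9]{ACJdE}. Your argument is a correct reconstruction of exactly that machinery --- uniform invertibility of $\xi\mathbb{I}-A_\eps$ on the contour obtained by contradiction from the compactness clause of (H2), passage to the limit in the Dunford integral to get compact convergence of the projections, and rank bookkeeping for (i)--(iv) --- so it follows essentially the same route as the cited source rather than a genuinely different one.
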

\begin{proof}
See \cite[Theorem 4.10]{ACL}.
\end{proof}

We now apply Theorem \ref{thm: E conv -> spectral conv} to problem \eqref{PDE: R_eps}.  To do so, we consider the following Hilbert spaces
\[
\Hi_\eps = L^2(R_\eps; \eps^{-1}dxdy),\ \ {\rm and}\ \  \Hi_0 = L^2_g(0,1),
\]
and we denote by   $\E_\eps$   the extension operator from $L^2_g(0,1)$ to  $L^2(R_\eps; \eps^{-1}dxdy)$, defined by
\begin{equation}
\label{def: extension}
(\E_\eps v)(x,y) = v(x),
\end{equation}
for all $v \in L^2_g(0,1)$, for almost all $(x,y) \in R_\eps$. Clearly $\norma{E_\eps u_0}_{(R_\eps; \eps^{-1}dxdy)} = \norma{u_0}_{L^2_g(0,1)}$, hence $\E_\eps$ trivially satisfies property \eqref{def: basic property E_eps}.

We consider the operators $A_{\eps}=(\Delta^2 - \tau \Delta +I)_{ L_{\eps }}$,  $A_0=(\Delta^2 - \tau \Delta +I)_{D }$  on $\Hi_{\eps}$ and $\Hi_0$ respectively, associated with the eigenvalue problems \eqref{PDE: R_eps} and  \eqref{ODE: limit problem}, respectively. Namely, $(\Delta^2 - \tau \Delta +I)_{ L_{\eps }}$ is the operator $\Delta^2 - \tau \Delta +I$ on $R_{\epsilon}$ subject to Dirichlet boundary
conditions on $L_{\eps}$ and Neumann  boundary conditions on $\partial R_{\eps}\setminus L_{\epsilon }$ as described in  \eqref{PDE: R_eps}. Similarly, $(\Delta^2 - \tau \Delta +I)_{D }$
is the operator $\Delta^2 - \tau \Delta +I$ on $(0,1)$ subject to Dirichlet boundary conditions as described in \eqref{ODE: limit problem}.

Then we can prove the following
\begin{theorem}\label{spectfin} The operators  $(\Delta^2 - \tau \Delta +I)_{ L_{\eps }}$  spectrally converge to\\  $(\Delta^2 - \tau \Delta +I)_{D }$ as $\eps \to 0$,  in the sense of Theorem~\ref{thm: E conv -> spectral conv}.
\end{theorem}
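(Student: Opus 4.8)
The plan is to invoke Theorem~\ref{thm: E conv -> spectral conv}: it suffices to verify hypotheses (H1) and (H2) for the operators $A_\eps = (\Delta^2 - \tau\Delta + I)_{L_\eps}$ and $A_0 = (\Delta^2-\tau\Delta+I)_D$ acting on $\Hi_\eps = L^2(R_\eps;\eps^{-1}dxdy)$ and $\Hi_0 = L^2_g(0,1)$. Since the quadratic forms associated with both problems dominate the respective $L^2$ norms, each operator is bounded below by the identity; in particular $0$ lies in the resolvent set, so $B_\eps = A_\eps^{-1}$ and $B_0 = A_0^{-1}$ are well defined and bounded. Hypothesis (H1)---compactness of the resolvents---then follows from the compact embeddings $H^2(R_\eps) \hookrightarrow\hookrightarrow L^2(R_\eps)$ and $H^2_0(0,1) \hookrightarrow\hookrightarrow L^2_g(0,1)$, which hold because $R_\eps$ is a bounded Lipschitz domain and $g$ is bounded away from $0$ and $\infty$ on $[0,1]$.

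The core of the proof is (H2), i.e. the compact convergence $B_\eps \overset{C}{\longrightarrow} B_0$, and this is where the analysis of Subsection~\ref{subsection: finding limit prb} enters. For the $\E\E$-convergence I would take a family $u_\eps \overset{E}{\longrightarrow} u$, so that the pulled-back data $\tilde f_\eps = T_\eps u_\eps$ converge to $u$ strongly in $L^2(R_1)$ and hence $\M(f)=u$; set $v_\eps = B_\eps u_\eps$ and $v = B_0 u$. The a priori bound \eqref{ineq: apriori ineq tilde v eps}, together with the decay estimates \eqref{ineq: decay y derivatives 1} and \eqref{ineq: decay y derivatives 2}, shows that $\tilde v_\eps = T_\eps v_\eps$ is bounded in $H^2(R_1)$, hence converges, up to subsequence, weakly in $H^2(R_1)$ and strongly in $L^2(R_1)$ to a limit independent of $y$. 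The computations of Subsection~\ref{subsection: finding limit prb} identify this limit as the unique solution $v$ of \eqref{ODE: auxiliary problem sigma2}; uniqueness forces the whole family to converge. Since
\[
\norma{v_\eps - \E_\eps v}^2_{\Hi_\eps} = \norma{\tilde v_\eps - v}^2_{L^2(R_1)} \longrightarrow 0,
\]
we obtain $B_\eps u_\eps \overset{E}{\longrightarrow} B_0 u$, which is the $\E\E$-convergence.

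For the compactness condition I would take any family with $\norma{u_\eps}_{\Hi_\eps} = 1$, equivalently $\norma{\tilde u_\eps}_{L^2(R_1)} = 1$. The same a priori estimate makes $\tilde v_\eps = T_\eps B_\eps u_\eps$ bounded in $H^2(R_1)$, so by the compact embedding $H^2(R_1) \hookrightarrow\hookrightarrow L^2(R_1)$ a subsequence converges strongly in $L^2(R_1)$; its limit is independent of $y$ by \eqref{ineq: decay y derivatives 1}, hence belongs to $\Hi_0$, and the strong $L^2(R_1)$ convergence is exactly the $\E$-convergence of $B_{\eps_k}u_{\eps_k}$ to an element of $\Hi_0$. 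This verifies (H2), and Theorem~\ref{thm: E conv -> spectral conv} then yields the claim.

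The main obstacle is the identification of the limit operator already carried out in Subsection~\ref{subsection: finding limit prb}, and in particular the appearance of the factor $1-\sigma^2$: the delicate point there is to determine the weak limit of $\eps^{-2}\partial^2_{yy}\tilde v_\eps$, which by the auxiliary test-function argument \eqref{eq: identity 2 order deriv}--\eqref{eq: boundary identity} equals $-\sigma\,\partial^2_{xx} v$ rather than $0$; substituting this back into the weak formulation \eqref{eq: limit probl weak x} is what produces the coefficient $(1-\sigma^2)$ in \eqref{ODE: auxiliary problem sigma2}. Relative to that analysis, the remaining work in the present theorem is essentially to recast those convergences in the language of $\E$-convergence and to upgrade the weak $H^2(R_1)$ convergence to the strong $L^2(R_1)$ convergence demanded by the definition of compact convergence, which is immediate from the compact embedding.
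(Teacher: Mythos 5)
Your proposal is correct and follows essentially the same route as the paper: the paper's proof likewise reduces the statement to verifying the two components of compact convergence of the resolvents (the compactness property and the $\E\E$-convergence), both of which are extracted from the a priori estimates and the identification of the limit problem \eqref{ODE: auxiliary problem sigma2} in Subsection~\ref{subsection: finding limit prb}. The only differences are cosmetic: you spell out (H1) and the uniqueness-of-the-limit argument explicitly, which the paper leaves implicit.
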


\begin{proof}
In view of Theorem \ref{thm: E conv -> spectral conv},  it is sufficient to prove the following two facts:
\begin{enumerate}[label=(\arabic*)]
\item if  $f_\eps \in L^2(R_\eps; \eps^{-1}dxdy)$ is  such that $\eps^{-1/2}\norma{f_\eps}_{L^2(R_\eps)} = 1$ for any $\eps >0$, and  $v_\eps$ is the corresponding solutions of Problem \eqref{PDE: R_eps f_eps}, then there exists a subsequence $\eps_k \to 0$ as $k \to \infty$ and $\bar{v} \in L^2_g(0,1)$ such that $v_{\eps_k}$ $\E$-converge to $\bar{v}$ as $k \to \infty$.
\item if  $f_\eps \in L^2(R_\eps; \eps^{-1}dxdy)$ and $f_\eps  \overset{E}{\longrightarrow} f$ as $\eps \to 0$, then the corresponding solutions $v_\eps$ of Problem \eqref{PDE: R_eps f_eps} $\E$-converge to the solution of Problem \eqref{ODE: auxiliary problem sigma2} with datum $f$.
\end{enumerate}
Note that (1) follows immediately from the computations in Section \ref{subsection: finding limit prb}. Indeed, if $f_\eps \in L^2(R_\eps; \eps^{-1}dxdy)$ is as in (1), up to a subsequence, $\tilde{f}_\eps \rightharpoonup f$ in $L^2(R_1)$, which implies that $\tilde{v}_\eps \rightharpoonup v_0\in H_0^2(0,1)$ in $H^2(R_1)$, where $v_0$ is the solution of Problem \eqref{ODE: auxiliary problem sigma2}. This implies that $\norma{v_\eps - \E v_0}_{L^2(R_\eps; \eps^{-1}dxdy)} \to 0$, hence (1) is proved.\\
In order to show (2) we take a sequence of functions $f_\eps \in L^2(R_\eps; \eps^{-1}dxdy)$ and $f\in L^2_g(0,1)$ such that $\eps^{-1/2}\norma{f_\eps - \E_\eps f}_{L^2(R_\eps)} \to 0$ as $\eps \to 0$. After a change of variable, this is equivalent to $\norma{\tilde{f}_\eps - \E f}_{L^2(R_1)} \to 0$ as $\eps \to 0$. Arguing as in Section \ref{subsection: finding limit prb}, one show that the $\tilde{v}_\eps \rightharpoonup v \in L^2_g(0,1)$ in $H^2(R_1)$ and that $v$ solves problem \eqref{ODE: auxiliary problem sigma2}. Hence $\norma{\tilde{v}_\eps - \E v}_{L^2(R_1)} \to 0$ as $\eps \to 0$, or equivalently, $\norma{v_\eps - \E_\eps v}_{L^2(R_\eps; \eps^{-1}dxdy)} \to 0$ as $\eps \to 0$, proving (2).

\end{proof}


\section{Conclusion}\label{conclusionsec}

Recall that the eigenpairs of problems \eqref{PDE: main problem_eigenvalues}, \eqref{PDE: Omega} are denoted by $(\lambda_n(\Omega_\eps), \varphi_n^\eps)$, $(\omega_n, \varphi_n^\Omega)_{n\geq1}$ respectively, where the two families of eigenfunctions $\varphi_n^\eps$, $\varphi_n^\Omega$  are complete orthonormal bases of the spaces $L^2(\Omega_{\epsilon})$, $L^2(\Omega )$, respectively.    Denote now by $(h_n, \theta_n)_{n\geq 1}$ the eigenpairs of problem $\eqref{ODE: limit problem}$
where  the eigenfunctions $h_n$ define an orthonormal basis of the space $L^2_g(0,1)$.
In the spirit of the definition of $\lambda_n^{\eps} $ given in Section 2, we  set now  $(\lambda_n^0)_{n\geq 1} = (\omega_k)_{k \geq 1} \cup (\theta_l )_{l\geq 1}$, where it is understood that the eigenvalues are arranged in increasing order and repeated according to their multiplicity.  For each $\lambda_n^0$ we define the function  $\phi_n^0 \in H^2(\Omega) \oplus H^2(R_\eps)$ in the following way:
\begin{equation*}
\phi^0_n =
\begin{cases}
    \varphi_k^\Omega, &\text{in $\Omega$}\\
    0, &\text{in $R_\eps$},
\end{cases}
\end{equation*}
if $\lambda_n^0 = \omega_k$, for some $k \in \N$; otherwise
\begin{equation*}
\phi^0_n=
\begin{cases}
    0, &\text{in $\Omega$}, \\
    \eps^{-1/2}\E_\eps h_l, &\text{in $R_\eps$}
\end{cases}
\end{equation*}
if $\lambda_n^\eps = \theta_l$, for some $l \in \N$ (here we agree to order the eigenvalues and the  eigenfunctions following the same rule used in the definition of $\lambda_n^{\eps}$ and $\phi_n^{\epsilon }$ in Section 2).

Finally,  if $x>0$ divides the spectrum $\lambda_n(\Omega_{\epsilon})$ for all $\eps >0 $ sufficiently small  (see the end of Section 2)   and $N(x)$ is the number of eigenvalues with $\lambda_n(\Omega_{\epsilon})\le x$ (counting their multiplicity),  we  define the projector $P_{x}^0$ from $L^2(\Omega_\eps)$ onto the linear span $[\phi_1^{0}, \dots, \phi_{N(x)}^{0}]$  by setting
\[
P_{x}^0 u = \sum_{i=1}^{N(x)} (u,\phi_i^0)_{L^2(\Omega_\eps)} \phi_i^0
\]
for all $u\in L^2(\Omega_\eps)$. (Note that choosing $x$ independent of $\eps$ is possible by the limiting behaviour of the eigenvalues.) Then, using Theorems \ref{thm: eigenvalues decomposition} and  \ref{spectfin}  we deduce the following.

\begin{theorem} \label{lastthm}
Let $\Omega_\eps$, $\eps>0$, be a family of  dumbbell domains satisfying the H-Condition. Then the following statements hold:
\begin{enumerate}[label =(\roman*)]
\item   $\lim_{\eps \to 0}\, \abs{\lambda_n(\Omega_\eps) - \lambda_n^0} = 0$,  for all  $n\in \N $.
\item   For any $x$ dividing the spectrum,
 $\lim_{\eps \to 0}\, \norma{\varphi^\eps_{n} - P^0_{x} \varphi^\eps_{n}}_{H^2(\Omega) \oplus L^2(R_\eps)} = 0$, for all $n = 1,\dots, N(x)$.
\end{enumerate}
\end{theorem}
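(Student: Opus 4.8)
The plan is to obtain Theorem~\ref{lastthm} by combining the eigenvalue decomposition of Theorem~\ref{thm: eigenvalues decomposition} with the spectral convergence of the channel problem established in Theorem~\ref{spectfin}. For part (i), I would start from Theorem~\ref{thm: eigenvalues decomposition}(i), which already gives $|\lambda_n(\Omega_\eps)-\lambda_n^\eps|\to 0$, and then show that $\lambda_n^\eps\to\lambda_n^0$ as $\eps\to 0$, so that the claim follows by the triangle inequality. The convergence $\lambda_n^\eps\to\lambda_n^0$ is a consequence of the fact that the two sequences are the ordered mergings of the fixed sequence $(\omega_k)_k$ with $(\theta_l^\eps)_l$ and $(\theta_l)_l$ respectively, together with $\theta_l^\eps\to\theta_l$, which follows from Theorem~\ref{spectfin} via Theorem~\ref{thm: E conv -> spectral conv}(i). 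Since only finitely many eigenvalues lie below any fixed bound and each $\theta_l^\eps$ converges, for $\eps$ small the position of the $n$-th element in the merged sequence stabilises and $\lambda_n^\eps\to\lambda_n^0$; the only delicate point is the bookkeeping of ties, handled by the ordering convention fixed in Section~\ref{sec: decomposition} and in the present section.

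For part (ii), fix $x$ dividing the spectrum. Since $x$ is independent of $\eps$ and the eigenvalues converge by (i), for $\eps$ small we have $N(x_\eps)=N(x)=:N$ and $x$ avoids every limit value $\omega_k$, $\theta_l$; hence the index sets $K=\{k:\omega_k\le x\}$ and $L=\{l:\theta_l\le x\}=\{l:\theta_l^\eps\le x\}$ are independent of $\eps$. I would write
\[
\varphi_n^\eps-P_x^0\varphi_n^\eps=\bigl(\varphi_n^\eps-P_{x_\eps}\varphi_n^\eps\bigr)+\bigl(P_{x_\eps}\varphi_n^\eps-P_x^0\varphi_n^\eps\bigr),
\]
where $P_{x_\eps}$ is the projector of Theorem~\ref{thm: eigenvalues decomposition}(ii). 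The first summand tends to $0$ in $H^2(\Omega)\oplus H^2(R_\eps)$ by Theorem~\ref{thm: eigenvalues decomposition}(ii), hence a fortiori in the weaker norm $H^2(\Omega)\oplus L^2(R_\eps)$. To treat the second summand, the crucial observation is that $P_{x_\eps}$ and $P_x^0$ share exactly the same $\Omega$-components: organising both projectors by source rather than by index, for $k\in K$ both project onto $\varphi_k^\Omega$ (extended by zero in $R_\eps$), and these contributions cancel. Therefore $P_{x_\eps}\varphi_n^\eps-P_x^0\varphi_n^\eps$ is supported in $R_\eps$ and equals
\[
\sum_{l\in L}\bigl[(\varphi_n^\eps,\gamma_l^\eps)_{L^2(\Omega_\eps)}\,\gamma_l^\eps-(\varphi_n^\eps,w_l^\eps)_{L^2(\Omega_\eps)}\,w_l^\eps\bigr],\qquad w_l^\eps:=\eps^{-1/2}\E_\eps h_l.
\]

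A direct computation gives $\norma{w_l^\eps}_{L^2(R_\eps)}=1$, so $w_l^\eps$ carries the same normalisation as $\gamma_l^\eps$, while $\eps^{1/2}\gamma_l^\eps$ is the $\Hi_\eps$-normalised channel eigenfunction. By Theorem~\ref{spectfin} and Theorem~\ref{thm: E conv -> spectral conv}(iv), $\eps^{1/2}\gamma_l^\eps$ $\E$-converges to a limit eigenfunction, which unwinds to $\norma{\gamma_l^\eps-w_l^\eps}_{L^2(R_\eps)}\to 0$ after matching the two normalisations. Splitting the displayed sum as $\sum_l(\varphi_n^\eps,\gamma_l^\eps)(\gamma_l^\eps-w_l^\eps)+\sum_l(\varphi_n^\eps,\gamma_l^\eps-w_l^\eps)\,w_l^\eps$ and bounding the coefficients by $\norma{\varphi_n^\eps}_{L^2(\Omega_\eps)}=1$ through Cauchy--Schwarz, both pieces tend to $0$ in $L^2(R_\eps)$, which is precisely the assertion measured in the norm $H^2(\Omega)\oplus L^2(R_\eps)$.

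The main obstacle will be the step $\norma{\gamma_l^\eps-w_l^\eps}_{L^2(R_\eps)}\to 0$ when the limit eigenvalue $\theta_l$ is not simple: Theorem~\ref{thm: E conv -> spectral conv}(iv) only yields $\E$-convergence of subsequences of normalised eigenfunctions to \emph{some} element of the limit generalized eigenspace, so one must align the orthonormal bases $(\gamma_l^\eps)_{l\in L}$ and $(h_l)_{l\in L}$ within each eigenspace. The clean way around this is to phrase the comparison in terms of the total spectral projectors onto eigenvalues $\le x$ rather than individual eigenfunctions: by parts (ii)--(iii) of Theorem~\ref{thm: E conv -> spectral conv} the channel spectral projector $\E$-converges to the limit one, so the two $L^2(R_\eps)$-orthogonal projections of $\varphi_n^\eps|_{R_\eps}$ implicit above --- onto $\mathrm{span}\{\gamma_l^\eps\}_{l\in L}$ and onto $\mathrm{span}\{w_l^\eps\}_{l\in L}$ --- differ by $o(1)$ in $L^2(R_\eps)$ regardless of any basis choice. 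A secondary, purely technical point is the verification that the ordering of the merged sequences stabilises for small $\eps$, needed to ensure that $K$ and $L$ are well defined and $\eps$-independent.
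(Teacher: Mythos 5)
Your proposal is correct and follows essentially the same route as the paper: part (i) via the triangle inequality through $\lambda_n^\eps$ combining Theorems~\ref{thm: eigenvalues decomposition} and~\ref{spectfin}, and part (ii) by passing through $P_{x_\eps}$, observing that the $\Omega$-components coincide, and replacing $\gamma_l^\eps$ by $\eps^{-1/2}\E_\eps h_l$ with a Cauchy--Schwarz estimate. The multiplicity issue you flag is handled in the paper by directly selecting an orthonormal family $\delta_j^\eps$ of generalized channel eigenfunctions with $\norma{\delta_j^\eps - \E_\eps h_j}_{L^2(R_\eps,\eps^{-1}dxdy)}\to 0$ via Theorem~\ref{thm: E conv -> spectral conv}, which is equivalent to your spectral-projector formulation.
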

\begin{proof}
The convergence of the eigenvalues follows directly by  Theorems \ref{thm: eigenvalues decomposition} and  \ref{spectfin}.  Indeed, by Theorem \ref{thm: eigenvalues decomposition} we know that $|\lambda_n(\Omega_\eps) - \lambda_n^\eps| \to 0$ as $\eps \to 0$. If $\lambda_n^\eps = \omega_k$ for some $k\in \N$, for all sufficiently small $\eps$, then we are done; otherwise, if $\lambda_n^\eps = \theta_l^\eps$ for some $l\in \N$, definitely in $\eps$, by Theorem \ref{spectfin} we deduce that $\theta_l^\eps \to \theta_l$ as $\eps \to 0$, hence $|\lambda_n(\Omega_\eps) - \theta_l| \leq |\lambda_n(\Omega_\eps) - \theta_l^\eps| + |\theta_l^\eps - \theta_l| \to 0 $ as $\eps \to 0$.\\
Consider now the convergence of the eigenfunctions. By Theorems~\ref{thm: E conv -> spectral conv},~\ref{spectfin} it follows that for any $\epsilon >0$ there exists an orthonormal  sequence of generalized eigenfunctions $\delta_j^{\eps }$ in $L^2(R_{\eps}, \eps^{-1}dxdy)$ associated with the eigenvalues
$\theta_j^{\eps}$ of problem \eqref{PDE: R_eps} such that for every $j\in \N$
\begin{equation}\label{lastthm1}
\norma{\delta^\eps_j - \E_{\eps} h_j}_{L^2(R_\eps,  \eps^{-1}dxdy )} \to 0,
\end{equation}
as $\eps \to 0$.  Recall that  a generalized eigenfunction is an element of a generalized eigenspace, see Section~\ref{sec: spectral convergence}.  We set
$
\gamma_j^\eps =\eps^{-1/2}\delta^\eps_j
$
and we note that $\gamma_j^{\eps}$ is a sequence of generalized eigenfunctions of Problem \eqref{PDE: R_eps} which is orthonormal in $L^2(R_{\eps})$, as required in Theorem~\ref{thm: eigenvalues decomposition}. Thus by Theorem~\ref{thm: eigenvalues decomposition} $(ii)$,  we deduce that
\small{
\begin{equation*}
\begin{split}
&\norma*{\varphi_n^{\eps} - \sum^{N(x) }_{i=1} (\varphi_n^\eps, \eps^{-1/2}\E_\eps h_i)_{L^2(R_\eps)} \eps^{-1/2} \E_\eps h_i}_{L^2(R_\eps)} \leq \norma*{\varphi_n^{\eps} - \sum^{N(x)}_{i=1} (\varphi_n^\eps, \gamma_i^\eps)_{L^2(R_\eps)} \gamma_i^\eps }_{L^2(R_\eps)}\\
&+ \norma*{\sum^{N(x)}_{i=1} (\varphi_n^\eps, \gamma_i^\eps)_{L^2(R_\eps)} \gamma_i^\eps - \sum^{N(x)}_{i=1} (\varphi_n^\eps, \eps^{-1/2}\E_\eps h_i)_{L^2(R_\eps)} \eps^{-1/2}\E_\eps h_i  }_{L^2(R_\eps)}\\
&\leq o(1) + \norma*{\sum^{N(x)}_{i=1} (\varphi_n^\eps, \eps^{-1/2}\E_\eps h_i)_{L^2(R_\eps)} ( \gamma_i^\eps -\eps^{-1/2}\E_\eps h_i)  }_{L^2(R_\eps)} + \norma*{\sum^{N(x)}_{i=1} (\varphi_n^\eps, \gamma_i^\eps - \eps^{-1/2}\E_\eps h_i)_{L^2(R_\eps)} \gamma_i^\eps}_{L^2(R_\eps)}\\
\end{split}
\end{equation*}}
\normalsize
Hence, 
\begin{equation}\label{lastthm2}
\begin{split}
&\norma*{\varphi_n^{\eps} - \sum^{N(x) }_{i=1} (\varphi_n^\eps, \eps^{-1/2}\E_\eps h_i)_{L^2(R_\eps)} \eps^{-1/2} \E_\eps h_i}_{L^2(R_\eps)}\\ 
&\leq o(1) + C \sum^{N(x)}_{i=1} \norma{\gamma_i^\eps - \eps^{-1/2}\E_\eps h_i}_{L^2(R_\eps)} =o(1) + C \sum^{N(x)}_{i=1} \norma{\delta _i^\eps - \E_\eps h_i}_{L^2(R_\eps ,\eps^{-1}dxdy   )}.
\end{split}
\end{equation}
Since the right-hand side of the last inequality in  \eqref{lastthm2} goes to zero as $\eps \to 0$ by (\ref{lastthm1}), we conclude that $\lim_{\eps \to 0}\, \norma{\varphi^\eps_{n} - P^0_{x} \varphi^\eps_{n}}_{L^2(R_\eps)} = 0$. Finally, the fact that $\lim_{\eps \to 0}\, \norma{\varphi^\eps_{n} - P^0_{x} \varphi^\eps_{n}}_{H^2(\Omega)} = 0$ follows directly from Theorem \ref{thm: eigenvalues decomposition}.
\end{proof}



\subsection*{Acknowledgment}
The first author is partially supported by grants MTM2012-31298, MTM2016-75465, ICMAT Severo Ochoa project SEV-2015-0554, MINECO, Spain and Grupo de Investigaci\'on CADEDIF, UCM. The third author acknowledges financial support from the INDAM - GNAMPA project 2016 ``Equazioni differenziali con applicazioni alla meccanica". The second and third authors are also members of the Gruppo Nazionale per l'Analisi Matematica, la Probabilit\`{a} e le loro Applicazioni (GNAMPA) of the Istituto Nazionale di Alta Matematica (INdAM).\\
The second and the third authors are very thankful to the Departamento de Matem\'atica Aplicada of the Universidad Complutense de Madrid for the warm hospitality received on the occasion of their visits. The authors are thankful to an anonymous referee for pointing out a number of items in the reference list.


\begin{thebibliography}{99}


\bibitem{ArrPhD}
    {J.M. Arrieta},
     \textit{Spectral properties of Schrödinger operators under perturbations of the domain},
     Doctoral Dissertation, Georgia Institute of Technology (1991).

\bibitem{Arr1}
    {J.M. Arrieta},
    \textit{Neumann eigenvalue problems on exterior perturbations of the domain},
    J. Differential Equations \textbf{117} (1995).

\bibitem{Arr2}
    {J.M. Arrieta},
    \textit{Rates of eigenvalues on a dumbbell domain. Simple eigenvalue case},
    Trans. Amer. Math. Soc. \textbf{347(9)} (1995), 3503–3531.

\bibitem{ACJdE}
    {J.M. Arrieta and A.N. Carvalho},
    \textit{Spectral convergence and nonlinear dynamics of reaction-diffusion equations under perturbations of the domain},
     J.Differential Equations \textbf{199} (2004), 143-178.

\bibitem{ACL}
    {J.M. Arrieta, A.N. Carvalho and G. Lozada-Cruz},
    \textit{Dynamics in dumbbell domains I. Continuity of the set of
    equilibria},
    {J. Differential Equations} \textbf{231(2)} (2006), 551-597.

\bibitem{AHH}
    {J.M. Arrieta, J.K. Hale and Q. Han},
    \textit{Eigenvalue problems for nonsmoothly perturbed domains},
    J. Differential Equations \textbf{91} (1991), 24-52.

\bibitem{arlacras}
    {J.M. Arrieta and P.D. Lamberti},
    \textit{Spectral stability results for higher-order operators under perturbations of the domain},
    C.R. Math. Acad. Sci. Paris \textbf{351(19-20)} (2013), 725-730.

\bibitem{ArrLamb}
    {J.M. Arrieta and  P.D. Lamberti},
    \textit{Higher order elliptic operators on variable domains. Stability results and boundary oscillations for intermediate problems},
    J. Differential Equations \textbf{263(7)} (2017), 4222–4266.
    

\bibitem{BuosoLamb}
     D. Buoso and P.D. Lamberti,
    \textit{Eigenvalues of polyharmonic operators on variable domains},
     ESAIM. Control, Optimisation and Calculus of Variations, \textbf{19(4)} (2013), 1225--1235.

\bibitem{BuosoProv}
    {D. Buoso and L. Provenzano},
    \textit{A few shape optimization results for a biharmonic {S}teklov problem},
    {J. Differential Equations} \textbf{259(5)} (2015), 1778--1818.

\bibitem{BuosoProvCH}
    {D. Buoso,  L.M. Chasman and L. Provenzano},
    \textit{On the stability of some isoperimetric inequalities for the fundamental tones of free plates},
    to appear in the Journal of Spectral Theory.

\bibitem{bulacompl}
    {V. Burenkov and P.D. Lamberti},
    \textit{Sharp spectral stability estimates via the Lebesgue measure of domains for higher order elliptic operators},
    {Rev. Mat. Complut.} \textbf{25(2)} (2012), 435-457.

\bibitem{ChAppl}
    {L.M. Chasman},
    \textit{An isoperimetric inequality for fundamental tones of free plates with nonzero Poisson’s ratio},
    Applicable Analysis (2015), DOI:10.1080/00036811.2015.1068299.

\bibitem{Ch}
    {L.M. Chasman},
    \textit{An isoperimetric inequality for fundamental tones of free plates},
    Comm. Math. Phys. \textbf{303(2)} (2011), 421--449.

\bibitem{C-H}
    {R. Courant and D. Hilbert},
    \textit{Methods of mathematical physics Vol.I},
     Wiley-Interscience, New York, 1953.

\bibitem{Daviesbook}
    {E.B. Davies},
    \textit{Spectral theory and differential operators}.
    Cambridge Studies in Advanced Mathematics \textbf{42}, Cambridge University Press, Cambridge, 1995.

\bibitem{HR}
    {J. Hale and  G. Raugel},
    \textit{Partial differential equations on thin domains}
    {Differential equations and mathematical physics, Math. Sci. Engrg.} \textbf{186}(1992), 63--97.

\bibitem{Gady1}
    {R.R. Gadyl'shin},
     \textit{Scattering by bodies with narrow channels}. (Russian) Mat. Sb. {\bf 185} (1994), no. 7, 39--62; translation in Russian Acad. Sci. Sb. Math. {\bf 82} (1995), no. 2, 293-313

\bibitem{Gady2}
    {R.R. Gadyl'shin},
     \textit{On the eigenvalues of the Neumann problem in a domain with a narrow connecting channel}. (Russian) Dokl. Akad. Nauk {\bf 387} (2002), no. 1, 26-30.


\bibitem{Gady3}
    {R.R. Gadyl'shin},
   \textit{On domains with a narrow isthmus in the critical case}. Proc. Steklov Inst. Math. 2003, Asymptotic Expansions. Approximation Theory. Topology, suppl. 1, S68-S74.


\bibitem{Gady4}
 {R.R Gadyl'shin},
 \textit{On the eigenvalues of a ``dumbbell with a thin handle''}. (Russian) Izv. Ross. Akad. Nauk Ser. Mat. {\bf 69} (2005), no. 2, 45--110; translation in Izv. Math. {\bf 69} (2005), no. 2, 265-329


\bibitem{GazzGS}
    {F. Gazzola, H.-C. Grunau and G. Sweers},
    \textit{`Polyharmonic boundary value problems - Positivity preserving and nonlinear higher order elliptic equations in bounded domains},
    Lecture Notes in Mathematics, Springer-Verlag, Berlin, 2010	
        	
\bibitem{Jimbo1}
    {S. Jimbo},
     \textit{The singularly perturbed domain and the characterization for the eigenfunctions with Neumann boundary conditions},
     {J. Differential Equations} \textbf{77} (1989), 322-350.

\bibitem{Jimbo2}
    {S. Jimbo and S. Kosugi},
    \textit{Spectra of domains with partial degeneration},
    {J. Math. Sci. Univ. Tokyo} \textbf{16(3)} (2009), 269-414.

\bibitem{Ksh-Wbrgr}
    {K. Kishimoto and H.F. Weinberger},
    \textit{The spatial homogeneity of stable equilibria of some reaction-diffusion systems on convex domains},
    {J. Differential Equations} \textbf{58(1)} (1985), 15--21.

\bibitem{LambertiProvenzano}
    {P.D. Lamberti and L. Provenzano},
    \textit{A maximum principle in spectral optimization problems for elliptic operators subject to mass density perturbations},
    {Eurasian Mathematical Journal}, \textbf{4(3)} (2013), 70--83.

\bibitem{lalaneu}
    {P.D. Lamberti and M. Lanza de Cristoforis},
    \textit{A real analyticity result for symmetric functions of the eigenvalues of a domain-dependent Neumann problem for the Laplace operator},
    {Mediterr. J. Math.} \textbf{4(4)} (2007), 435-449.

\bibitem{necas}
    {J. Ne\v{c}as},
    \textit{Les m\'{e}thodes directes en th\'{e}orie des \'{e}quations elliptiques},
    Masson et Cie, \'{E}diteurs, Paris; Academia, \'{E}diteurs, Prague 1967.

\bibitem{Prov}
    {L. Provenzano},
    \textit{A note on the Neumann eigenvalues of the biharmonic operator},
    to appear in {Mathematical Methods in the Applied Sciences}(2016).

\bibitem{taylor}
    {J. Taylor},
    \textit{Convergence of Dirichlet eigenvalues for elliptic systems on perturbed domains},
    {J. Spectr. Theory} \textbf{3(3)} (2013), 293-316.
%
\end{thebibliography}
\end{document}